 \newcommand{\eps}{\varepsilon}
 \newcommand{\ovl}{\overline}
\newcommand{\C}{\mathbb{C}}
 \newcommand{\G}{\mathcal{G}}
 \newcommand{\F}{\mathcal{F}}
 \newcommand{\Og}{\mathcal{O}}
 \newcommand{\Z}{\mathbb{Z}}
 \newcommand{\U}{\mathscr{U}}
 \newcommand{\Pc}{\mathcal{P}}
 \newcommand{\Q}{\mathbb{Q}}
 \newcommand{\ds}{\displaystyle}
 \renewcommand{\P}{\mathbb{P}}
\newcommand{\Ce}{\overline{\mathbb{C}}}
\DeclareMathOperator{\End}{End}
\DeclareMathOperator{\sing}{Sing} 
\DeclareMathOperator{\gen}{gen}
\DeclareMathOperator{\tang}{Tang}
\DeclareMathOperator{\aut}{Aut}
\DeclareMathOperator{\Kod}{Kod}
\DeclareMathOperator{\Res}{Res}
\DeclareMathOperator{\Tr}{Tr}
\DeclareMathOperator{\Hol}{Hol}
\DeclareMathOperator{\diff}{Diff}
\newtheorem*{theoremx}{Theorem}
\newtheorem{theorem}{Theorem}[section]
\newtheorem{proposition}[theorem]{Proposition}
\newtheorem{corollary}[theorem]{Corollary}
\newtheorem{lemma}[theorem]{Lemma}
\theoremstyle{definition}
\newtheorem{definition}[theorem]{Definition}
\newtheorem{example}[theorem]{Example}
\theoremstyle{remark}
\newtheorem{remark}[theorem]{Remark}
\title{Classification of flat pencils of foliations on compact complex surfaces}
\author{Liliana Puchuri\thanks{Pontificia Universidad Cat\'olica del Per\'u \& Instituto de Mat\'ematica and Ciencias Afines (IMCA). Email: \texttt{lpuchuri@pucp.pe}}}
\begin{document}

\maketitle

\begin{abstract}
Related to the classification of regular foliations in a complex algebraic surface, we address the problem of classifying the complex surfaces which admit a flat pencil of foliations.  On this matter, a classification of flat pencils which admit foliations with a first integral of genus one and isolated singularities was done by Lins Neto. In this work, we complement Lins Neto's work, by obtaining the classification of compact complex surfaces which have a pencil with an invariant tangency set. 

\noindent{\bf Keywords:} Compact Complex Surfaces, Pencil of Foliations, First Integrals

\noindent{\bf MSC (2010):} 34C07, 14J27, 14D06, 32S65

\end{abstract}

\section{Introduction}
It is a well known fact that every foliation on $\P^2$ must have singularities, however, there exist complex surfaces which admit regular foliations, that is, foliations without singularities. Then it is natural to ask for a classification of complex surfaces which admit regular foliations.
Related to this problem, we have a classification of regular foliations on complex surfaces due to Brunella~\cite{BR3}, which is based from Enriques-Kodaira classification of compact complex surfaces. Brunella's classification is as follows.
\begin{theorem}[{\cite[Th\'eor\`eme~2]{BR3}}]\label{teo:brintro}
Every regular foliation over a compact complex surface $X$ with $\Kod(X)<2$ belongs to the following list:
\begin{enumerate}
 \item elliptic or rational fibrations,
 \item foliations transversal to elliptic or rational fibrations,
 \item turbulent foliations,
 \item linear foliations on a torus,
 \item trivial foliations on Hopf or Inoue surfaces.
\end{enumerate}
\end{theorem}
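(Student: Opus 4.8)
The plan is to run the Enriques--Kodaira classification of $X$ in parallel with a study of the two line bundles attached to a regular foliation $\F$: its tangent bundle $T_{\F}$ and its normal bundle $N_{\F}$, related by $T_{\F}\otimes N_{\F}\cong\det TX\cong K_X^{*}$, so that the foliated canonical bundle $K_{\F}:=T_{\F}^{*}\cong K_X\otimes N_{\F}$ governs the Kodaira dimension $\Kod(\F)$. Since $\F$ is regular, the rank-two bundle $TX\otimes T_{\F}^{*}$ admits a nowhere-vanishing section (the inclusion $T_{\F}\hookrightarrow TX$), so its top Chern class vanishes, which expands to $0=c_2(X)+K_X\cdot N_{\F}+c_1(N_{\F})^2$; moreover the Baum--Bott formula with $\sing\F=\emptyset$ gives $c_1(N_{\F})^2=0$, whence $K_X\cdot N_{\F}=-c_2(X)$. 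The first step is to reduce to $X$ minimal, by noting that a regular foliation on a non-minimal surface arises from a (possibly singular) foliation on its minimal model by blowing up dicritical singularities, and that the five families in the statement are stable under this operation. One then treats $\Kod(X)=-\infty,0,1$ in turn, subdividing each according to whether $b_1(X)$ is even (K\"ahler) or odd.

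When $\Kod(X)=-\infty$ and $X$ is rational, the numerical constraints above --- combined with Riemann--Roch and the Hodge index theorem --- leave room only for $X$ a Hirzebruch surface with $\F$ its ruling, item (1); in particular $\P^2$ carries no regular foliation. When $X$ is geometrically ruled over a curve of genus $\ge 1$, restrict $\F$ to a general fibre $F\cong\P^1$: if $\F$ is tangent to $F$ then $T_{\F}=T_{X/C}$ and $\F$ is the ruling, item (1); if $\F$ is generically transverse to the ruling, a local analysis at a would-be invariant fibre shows that regularity forces transversality along every fibre, so $\F$ is a Riccati foliation transverse to a rational fibration, item (2). When $X$ is a non-algebraic surface of class VII, the statement becomes the rigidity assertion that a minimal class VII surface carrying a regular foliation is a Hopf surface or an Inoue surface and that $\F$ is one of the finitely many standard (``trivial'') foliations induced from the universal cover; I expect this to be the single hardest step, as it relies on the fine structure theory of class VII surfaces.

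When $\Kod(X)=0$, up to finite \'etale cover $X$ is a complex torus, a bielliptic surface, a Kodaira surface (for $b_1$ odd), a K3 surface, or an Enriques surface. On a torus $TX$ is trivial, so $T_{\F}\cong\Og$ modulo torsion and $\F$ is linear, item (4); on a bielliptic or Kodaira surface the rigidity of the torus-cover, respectively of the elliptic fibre-bundle structure, forces $\F$ to respect one of its elliptic fibrations, giving item (1) or, transverse or turbulent over it, items (2)--(3); and on a K3 or Enriques surface the identity above reduces to $0=c_2(X)+c_1(N_{\F})^2$, which with $c_1(N_{\F})^2=0$ forces $c_2(X)=0$, contradicting $c_2(X)=24$, resp.\ $12$, so no regular foliation exists. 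When $\Kod(X)=1$, $X$ has its canonical elliptic fibration $f\colon X\to C$, and one compares $\F$ with $f$: if $\F=f$ this is a regular fibration (item (1)), which occurs exactly when all fibres of $f$ are smooth; otherwise the general fibre $E$ is not $\F$-invariant, and from $\deg N_{\F}|_E=\deg\tang(\F,E)\ge 0$ one finds that $\F$ is either transverse to every fibre of $f$ (item (2)) or tangent to finitely many fibres and transverse to the rest, i.e.\ turbulent (item (3)). In both the ruled and the elliptic settings one must still exclude that $\F$ has no algebraic leaf; here I would invoke Miyaoka's theorem: if $T_{\F}$ is not pseudo-effective the general leaf of $\F$ is a rational curve and $\F$ is a rational fibration, while if $T_{\F}$ is pseudo-effective then $\Kod(\F)\le 0$ and a fibration structure is extracted, after which one checks that the positivity of $T_{\F}$ and $N_{\F}$ is compatible with $\Kod(X)<2$ only in the configurations listed. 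Apart from the class VII rigidity, this last point --- preventing $K_{\F}\cong K_X\otimes N_{\F}$ from becoming big --- is the main obstacle, and it is precisely where the hypothesis $\Kod(X)<2$ enters.
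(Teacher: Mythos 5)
The paper offers no proof of this statement: it is quoted verbatim from Brunella~\cite{BR3} (Th\'eor\`eme~2), so the only benchmark is Brunella's published argument, and your outline does follow the same broad strategy (Enriques--Kodaira classification of $X$ run against the index identities for a regular foliation, namely $c_2(X)+K_X\cdot N_{\F}+N_{\F}^2=0$ from the exact sequence $0\to T_{\F}\to TX\to N_{\F}\to 0$ and $N_{\F}^2=0$ from Baum--Bott). But what you have written is a plan with the hard parts left open, not a proof. The most conspicuous gap is the class VII case: you state yourself that you ``expect this to be the single hardest step'' and invoke unspecified ``fine structure theory''; in Brunella's proof this is precisely where substantial work is done, so deferring it leaves the theorem unproved for an entire branch of the classification. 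The reduction to minimal models (``the five families are stable under blowing up dicritical singularities''), the bielliptic and Kodaira surface cases (``rigidity forces $\F$ to respect one of its elliptic fibrations''), and the rational case (``Riemann--Roch and Hodge index leave room only for a Hirzebruch surface with its ruling'') are likewise asserted rather than argued.

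One step is wrong as stated, not merely incomplete: on a torus you claim $TX$ trivial forces $T_{\F}\cong\Og$ and hence $\F$ linear. A sub-line-bundle of the trivial rank-two bundle need not be trivial, and in fact regular foliations on a two-dimensional torus are \emph{not} all linear: by Ghys's theorem (quoted in this paper as Proposition~\ref{pro:new:ghys}) they are induced by a closed form $b(x)dx+dy$ with $b$ constant \emph{or elliptic}; when $b$ is a nonconstant elliptic function the foliation is turbulent with respect to the elliptic fibration $x=\text{const}$, falling under item~(3), not item~(4). The final list is still satisfied, but the inference you use to reach it is false, and an argument in the torus case must either prove Ghys-type rigidity or otherwise separate the linear from the turbulent foliations. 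Until the class VII case is actually carried out and the torus/bielliptic/Kodaira steps are replaced by arguments, this remains a sketch of Brunella's route rather than an alternative proof.
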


The notion of \emph{pencil} (or \emph{linear family}) of foliations was defined by Lins Neto~\cite{LN2}. A pencil $\Pc:=\{\F_{\alpha}\}_{\alpha \in \Ce}$ on a compact complex surface $X$ is defined by two foliations $\F_0$ and $\F_{\infty}$ in $\Pc$ together with an isomorphism between their normal bundles.
Associated to a pencil are its \emph{curvature} (see Section~\ref{sec:curvature}) and its \emph{tangency set} $\Delta(\Pc)$ (see Section~\ref{sec:definitions}) which is an analytic set formed by the singularities of the foliations in $\Pc$. A pencil is called \emph{flat} if its curvature is zero.

In this work, we address the following problem:
\begin{quote}
``Classify the compact complex surfaces which admit flat pencils with an invariant tangency set''.
\end{quote}
Moreover, for these particular pencils, we will characterize the following set
\[
I_p(\Pc):=\{\alpha \in \Ce\::\:\F_{\alpha}\mbox{ has a meromorphic first integral on } X\}.
\]
This is related to the Poincar\'e's problem, which consists in bounding the degree of an invariant algebraic curve in terms of the degree of the foliation.

The paper is divided as follows: Sections~\ref{sec:prelim} and \ref{sec:pencils} contain the preliminary definitions we will need in this work, in particular, Section~\ref{sec:pencils} contains the definition and main features of the pencils of foliations.
In Section~\ref{sec:caseempty}, we consider the case when $\Delta(\Pc)$ is empty. Theorems~\ref{teodeltavacio} and \ref{teo:linearfol}, and Corollaries~\ref{coro:ipptoro} and~\ref{coro:ipphopf}, are comprised in the following theorem.
\begin{theoremx}
Let $X$ be a complex compact surface which admits a pencil $\Pc$ with empty tangency set. Then 
 $X$ is either a torus or a Hopf surface and $\Pc$ is generated by linear foliations.
Moreover
\begin{enumerate}
\item If $X$ is a Hopf surface then $I_p(\Pc)=\emptyset$.
\item If $X$ is a torus and  $\# I_p(\Pc)\geq 3$ then $X=E\times E$, with $E=\C/\langle 1,\tau\rangle$ and $I_p(\Pc)\setminus\{\infty\}$ is either $\Q$ or $\Q(\tau)$, only up to a reparametrization of the parameter space of the pencil.
\end{enumerate}
\end{theoremx}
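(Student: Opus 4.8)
The plan is to convert the hypothesis into a rigidity statement for the tangent bundle of $X$, identify $X$ via the Enriques--Kodaira classification and Brunella's Theorem~\ref{teo:brintro}, and then analyse meromorphic first integrals on tori and on Hopf surfaces separately. First I would observe that, as $\Delta(\Pc)$ contains $\sing(\F_\alpha)$ for every $\alpha\in\Ce$, an empty tangency set makes every $\F_\alpha$ regular and any two of them everywhere transverse. If $N$ denotes the common normal bundle of the pencil, transversality of $\F_0$ and $\F_\infty$ yields a holomorphic splitting $TX\cong T\F_0\oplus T\F_\infty$; since $T\F_\alpha\otimes N_{\F_\alpha}\cong\det TX\cong K_X^{-1}$ and $N_{\F_0}\cong N_{\F_\infty}\cong N$, this forces $T\F_0\cong T\F_\infty$, hence $TX\cong N^{\oplus2}$ and $K_X\cong N^{\otimes-2}$ (equivalently, $\omega_0\wedge\omega_\infty$ is a nowhere‑vanishing section of $K_X\otimes N^{\otimes2}$). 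In particular $X$ has no $(-1)$‑curve --- such a curve $C$ would have $\deg(TX|_C)=2+C^2=1$, while $TX|_C\cong(N|_C)^{\oplus2}$ has even degree --- so $X$ is minimal, and $c_1(TX)^2=4\,c_2(TX)$, i.e.\ $K_X^2=4\,e(X)$. Running this relation through the Enriques--Kodaira classification, Noether's formula together with integrality of $\chi(\Og_X)$ and the Bogomolov--Miyaoka--Yau inequality leave only the cases in which $X$ is a complex torus, a bielliptic surface, a (primary or secondary) Kodaira surface, a ruled surface over an elliptic curve, a smooth elliptic fibre bundle over a curve of genus $\ge2$, a Hopf surface, or an Inoue surface; all of these have $e(X)=K_X^2=0$.

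Next I would eliminate all but tori and Hopf surfaces. Kodaira surfaces are discarded because their tangent bundle is a non‑split self‑extension of $\Og$, hence not of the form $N^{\oplus2}$; Inoue surfaces because they carry a unique regular foliation, so no transverse pair. For ruled surfaces over an elliptic curve and for smooth elliptic fibre bundles over higher‑genus curves, Brunella's Theorem~\ref{teo:brintro} lists the regular foliations (the fibration(s), together with the foliations transverse to one of them), and restricting normal bundles to a fibre shows that no two transverse members share an isomorphic normal bundle, so no pencil with empty tangency set exists. The delicate case is that of bielliptic surfaces: here I would pull $\Pc$ back to the covering torus $T$, where by the analysis below it is generated by constant $1$‑forms; for the pencil to descend to $X=T/G$, both generators $\omega_0,\omega_\infty$ must be eigenvectors of the (nontrivial) deck action for the \emph{same} character, which forces them to be proportional --- impossible for a pencil. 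Hence $X$ is a complex torus or a Hopf surface (necessarily one over which $\C^2\setminus\{0\}$ carries linear foliations descending to $X$), and by Theorem~\ref{teo:brintro} all the $\F_\alpha$ are linear foliations inherited from $\C^2$; since a one‑parameter family of linear foliations is a Möbius family of directions, $\Pc$ is generated by linear foliations.

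It remains to compute $I_p(\Pc)$. On a Hopf surface a linear foliation has at most the (at most two) invariant elliptic curves as compact leaves and the closure of any other leaf is not analytic; since a meromorphic first integral would produce infinitely many invariant algebraic curves (equivalently, the algebraic reduction of a Hopf surface has dimension $\le1$ and is never tangent to such a foliation), no $\F_\alpha$ admits one, so $I_p(\Pc)=\emptyset$. On a torus $X=\C^2/\Lambda$ the foliation $\F_\alpha$ is the foliation by translates of a complex line $L_\alpha\subset\C^2$, and $\F_\alpha\in I_p(\Pc)$ exactly when $L_\alpha$ contains two $\mathbb{R}$‑independent vectors of $\Lambda$, i.e.\ when $L_\alpha$ descends to an elliptic curve $E_\alpha\subset X$ and $\F_\alpha$ is the associated elliptic fibration. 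If $\#I_p(\Pc)\ge3$ then at least two of these elliptic curves are distinct, hence transverse, giving an isogeny $E_1\times E_2\to X$, and a third one, composed with the projections $X\to X/E_1$ and $X\to X/E_2$, forces $E_1$ and $E_2$ to be isogenous; after the obvious normalisations this identifies $X$ with $E\times E$ for some $E=\C/\langle1,\tau\rangle$. In such coordinates $L_\alpha$ has a slope $\lambda_\alpha\in\Ce$, and $L_\alpha$ contains a rank‑$2$ sublattice of $\Lambda$ precisely when $\lambda_\alpha$ maps a finite‑index subgroup of $\Z+\tau\Z$ into itself, that is, when $\lambda_\alpha\in\End^0(E)$ --- which equals $\Q$ if $E$ has no complex multiplication and $\Q(\tau)$ if $E$ has complex multiplication by an order of $\Q(\tau)$. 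Since $\alpha\mapsto\lambda_\alpha$ is a Möbius transformation, up to reparametrising the parameter space $I_p(\Pc)\setminus\{\infty\}$ is $\Q$ or $\Q(\tau)$, as claimed.

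I expect the main obstacle to lie in the second step, the Enriques--Kodaira case analysis --- especially the bielliptic case, where the numerical invariants alone do not suffice and one must exploit the pencil structure itself (two independent twisted global $1$‑forms on the covering torus) to reach a contradiction --- together with, on the constructive side, pinning down exactly which Hopf surfaces occur and verifying that they genuinely carry such pencils.
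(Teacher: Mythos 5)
Your overall strategy (minimality, Chern-number relations, Enriques--Kodaira plus Brunella's list, then separate analysis of $I_p$ on tori and Hopf surfaces) parallels the paper's, but two steps as written have genuine gaps. The most serious one is the claim that ``by Theorem~\ref{teo:brintro} all the $\F_\alpha$ are linear foliations inherited from $\C^2$''. Brunella's list does not say that every regular foliation on a torus is linear: tori also carry elliptic fibrations, foliations transverse to them and turbulent foliations, i.e.\ (by Ghys, Proposition~\ref{pro:new:ghys}) foliations given by closed forms $b(x)dx+dy$ with $b$ a non-constant elliptic function; likewise elliptic Hopf surfaces carry regular foliations that are not the ``trivial'' linear ones. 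So linearity of the pencil must be proved, not read off the list. The paper does this via Proposition~\ref{pro:new:ghys}: writing $\F$, $\G$ as $b(x)dx+dy$, $c(x)dx+dy$, empty tangency forces $b-c$ to be a nonzero constant, and if $b$ were a non-constant elliptic function the local \emph{holomorphic} generators would acquire common zeros along the pole divisor, producing a non-empty tangency set; for Hopf surfaces one must first exclude the elliptic ones (as in the proof of Theorem~\ref{teo:linearfol}). Your proposal simply asserts the conclusion here.

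The second gap is in the elimination of the intermediate classes. The argument ``restricting normal bundles to a fibre shows that no two transverse members share an isomorphic normal bundle'' does not work for elliptic fibre bundles over curves of genus $\geq 2$ (nor for two Riccati members on a ruled surface over an elliptic curve): for an invariant elliptic fibre $T$ one has $N_{\F}\cdot T=0$, and for a member transverse to it $N_{\G}\cdot T=\chi(T)+\tang(\G,T)=0$, so the fibre degrees coincide and give no obstruction; the actual obstruction (zeros of a holomorphic $1$-form on the base, or the fibration being forced into the pencil) is what the paper captures uniformly with Lemma~\ref{lem:new44} and Lemma~\ref{lem:new45} followed by the flat-principal-bundle/trivial-holonomy contradiction in Theorem~\ref{teodeltavacio}. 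Relatedly, you only derive $K_X^2=4c_2(X)$ and wave at Noether/BMY, whereas the paper gets $c_1^2(X)=c_2(X)=0$ directly from $BB(\F)=N_{\F}^2=0$ and formula~\eqref{eq:newmf}. Finally, in parts (1)--(2) you replace proofs by assertions: ``at most two compact leaves and non-analytic leaf closures'' on a Hopf surface is exactly what needs proving (the paper's Proposition~\ref{pro:24} does it by an explicit intersection count and also treats secondary Hopf surfaces via the finite cover), and ``after the obvious normalisations this identifies $X$ with $E\times E$'' hides real content: an isogeny $E_1\times E_2\to X$ with $E_1$ isogenous to $E_2$ does not by itself give $X\cong E\times E$ nor the identification $I_p(\Pc)\setminus\{\infty\}=\End(E)\otimes\Q$; this is precisely the cited Pereira--Pirio result (Proposition~\ref{pro:new:vitorio}), which the paper invokes rather than reproves.
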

Finally, in Section~\ref{sec:invariant}, we deal with flat pencils with invariant and non-empty tangency set.  
Four remarkable examples in $\P^2$ of this kind of pencils were given by Lins Neto in~\cite{LN3,LN1,LN2}, see Example~\ref{ejmmodelo}. Lins Neto also obtained in~\cite{LN3} the following characterization of these pencils.
\begin{theorem}[{\cite[Theorem~3]{LN3}}]\label{teo:linsneto}
Let $M$ be a compact complex surface and $\F$, $\G$ be two foliations on $M$ such that $T_{\F}=T_{\G}$ and $\Pc=(\F_{\alpha})_{\alpha\in\Ce}$ be the pencil generated by $\F$ and $\G$. Suppose that
\begin{enumerate}
 \item $\F\neq \G$;
 \item The singularities of $\F$ are reduced in the sense of Seidenberg;
 \item $\F$ and $\G$ have holomorphic first integrals, say $f:M\to S_1$ and $g:M\to S_2$, respectively, where $f$ is an elliptic fibration.
\end{enumerate}
Then
\begin{enumerate}
 \item If $K_M\neq 0$ then $M$ is a rational surface. In this case, the pencil is  bimeromorphically equivalent to one of the types of Example~\ref{ejmmodelo}. Moreover, we have $I_p(\Pc)=\lambda\cdot\Q\cdot\Gamma\cup\{\infty\}$, where $\lambda\in\C^*$ and $\Gamma=\langle 1,e^{2\pi i/3}\rangle$ or $\langle 1,i \rangle$. In particular, $E(\Pc)$ is countable and dense in $\Ce$.
 \item If $K_M=0$ then, either $M$ is a complex algebraic torus, or $M$ is an algebraic $K3$ surface. Moreover, the family is exceptional (in the sense of~\cite{LN3}) if, and only if, $I_p(\Pc)$ contains at least three elements.
\end{enumerate}
\end{theorem}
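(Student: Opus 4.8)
The plan is to exploit the elliptic fibration $f\colon M\to S_1$ together with the foliation $\G$ --- which by hypothesis~(1) is transverse to the generic fibre of $f$ --- to pin down the birational type of $M$ via the Enriques--Kodaira classification, and then to describe $I_p(\Pc)$ by a period computation on the fibres of $f$; throughout, say $\F_0=\F$ and $\F_\infty=\G$. \textbf{Step 1: geometry of $f$.} First I would use reducedness (hypothesis~(2)): it restricts the admissible Kodaira types of the singular fibres of $f$ --- only those contributing reduced, nodal-type singularities to $\F$ survive --- and localises the tangency set $\Delta(\Pc)$ over those fibres and along the tangency divisor of $\F$ with $\G$. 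Feeding this into the canonical bundle formula $K_M=f^*\big(K_{S_1}\otimes L\big)\otimes\Og_M\big(\sum_i (m_i-1)F_i\big)$ and using hypothesis~(3): if $K_M\neq0$ one forces $S_1\cong\P^1$, rules out --- using the flat-pencil structure together with the first integrals --- the remaining non-rational options (the surfaces of $\Kod(M)=1$ and the bielliptic and Enriques surfaces, on which no transverse foliation carrying a holomorphic first integral fits into such a pencil), and concludes that $M$ is a rational surface; if $K_M=0$, the Enriques--Kodaira list together with transversality of $\G$ leaves only the torus and the $K3$ cases, and both are algebraic since $f$ and $g$ are algebraically independent (again hypothesis~(1)), so $M$ has algebraic dimension $2$.

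\textbf{Step 2: $I_p(\Pc)$ and complex multiplication.} Over the complement $U$ of the degenerate fibres the two first integrals equip the generic fibre of $f$ with a transverse structure which, the pencil being flat, reduces to a transverse translation structure modelled on $\C/\Lambda_0$ for a period lattice $\Lambda_0\subset\C$: trivialising the identification $N_\F\cong N_\G$ one may write $\F_\alpha\colon\ \omega_\alpha=df+\alpha\,u\,dg$ for a local unit $u$, flatness forces $d\log u$ to be a fixed combination of logarithmic differentials along the components of $\Delta(\Pc)$, and integrating this combination around the fibre yields a distinguished period that depends on $\alpha$ in an affine way and that lies in $\Lambda_0$ modulo torsion exactly when $\F_\alpha$ acquires a meromorphic first integral on $M$. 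Chasing the arithmetic, this happens precisely for $\alpha\in\lambda\cdot\Q\cdot\Gamma$ with $\lambda\in\C^*$ and $\Gamma\subset\C$ the ring of complex multiplications of $\Lambda_0$; hence $I_p(\Pc)=\lambda\cdot\Q\cdot\Gamma\cup\{\infty\}$, which is countable and dense in $\Ce$, and likewise $E(\Pc)$. To pin down $\Gamma$, I would use that the Gauss--Manin monodromy of $f$ around each degenerate fibre acts on $\Lambda_0$ and, because the translation structure is globally defined and $\G$ has a \emph{holomorphic} first integral, must stabilise $\Lambda_0$ together with the distinguished direction; simultaneous compatibility of these local monodromies forces $\Lambda_0$ to carry an order-$3$ or order-$4$ rotation, i.e. to be homothetic to the Eisenstein lattice $\Z[e^{2\pi i/3}]$ or the Gaussian lattice $\Z[i]$, so that $\Gamma=\langle1,e^{2\pi i/3}\rangle$ or $\langle1,i\rangle$ accordingly.

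\textbf{Step 3: model examples and the $K_M=0$ dichotomy.} When $K_M\neq0$, blowing down $M$ and pushing $\F$ and $\G$ to $\P^2$ yields a Hesse-type pencil, and since a flat pencil is rigid --- determined, up to reparametrisation of the parameter $\alpha$, by its underlying elliptic fibration and transverse structure, which Steps~1--2 have reduced to finitely many possibilities --- $\Pc$ is bimeromorphically equivalent to one of the four pencils of Example~\ref{ejmmodelo}, whence the stated form of $I_p(\Pc)$. When $K_M=0$ the same period computation applies, and the condition $\#I_p(\Pc)\geq3$ is exactly that the distinguished period be locked into $\Lambda_0$ along a whole rank-two lattice of parameters --- the exceptionality condition of~\cite{LN3} --- while conversely an exceptional family automatically has three, hence infinitely many, members with a first integral. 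The delicate point throughout is Step~2: controlling the interplay between the monodromy of $f$ and the first integral of $\G$, and deducing from the existence of even one extra element of $I_p(\Pc)$ that the transverse structure must be the Eisenstein or the Gaussian one; the remaining work is bookkeeping with the Enriques--Kodaira list and the canonical bundle formula.
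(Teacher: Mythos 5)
The first thing to note is that this statement is not proved in the paper at all: Theorem~\ref{teo:linsneto} is imported verbatim from Lins Neto (\cite[Theorem~3]{LN3}) and is used later as a black box, so there is no internal proof to measure your attempt against. Judged on its own, your outline has the right general flavour (Enriques--Kodaira plus an analysis of the transverse structure on the elliptic fibres), but every decisive step is asserted rather than argued. Already at the start, transversality of $\G$ to the generic fibre of $f$ is not ``hypothesis~(1)''; it is a consequence of the genus-one index computation (the content of Lemma~\ref{lem:new:22}: $\F\neq\G$ with $\gen(f)=1$ makes $\G$ turbulent relative to $f$), and that computation is nowhere in your sketch. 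In Step~1, the phrase ``rules out the remaining non-rational options'' is precisely the theorem in the case $K_M\neq 0$: you give no mechanism excluding properly elliptic, bielliptic or Enriques surfaces, and the constraint $K_M=T_{\F}^*\otimes N_{\F}^*$ together with the existence of two holomorphic first integrals has to be exploited quantitatively, not just invoked.

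The deeper gaps are in Steps~2--3. The criterion actually driving the result (and the one this paper adapts in Section~\ref{sec:invariant} for the genus-zero case) is that the global holonomy group of $\F_{\alpha}$ on an elliptic fibre is generated by maps of the form $z\mapsto\lambda_j z+a_j\alpha+b_j$, and $\alpha\in I_p(\Pc)$ essentially when this group is finite; the lattice $\Gamma=\langle 1,e^{2\pi i/3}\rangle$ or $\langle 1,i\rangle$ appears because the multipliers $\lambda_j$ must be finite-order automorphisms of the elliptic curve fixing a point, hence of order $3$, $4$ or $6$, forcing complex multiplication. Your substitute criterion --- a ``distinguished period lying in $\Lambda_0$ modulo torsion'' depending affinely on $\alpha$ --- is never shown to be equivalent to the existence of a meromorphic first integral on $M$, and the monodromy argument you sketch for pinning down $\Gamma$ is not carried out. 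Likewise, the ``rigidity'' claim that a flat pencil is determined up to bimeromorphism by its fibration and transverse structure, which is what would identify $\Pc$ with one of the four models of Example~\ref{ejmmodelo}, is unsupported, and in the case $K_M=0$ the equivalence between exceptionality and $\#I_p(\Pc)\geq 3$ is restated rather than derived. As it stands the proposal is a plausible roadmap, not a proof; each of the points above would require the substantial arguments of \cite{LN3} to fill in.
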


In~\cite{LN1,LN2}, Lins Neto computes the set $I_p(\Pc)$, for the pencils given in Example~\ref{ejmmodelo}. This was done using the theory of foliations transversal to the fibers of a fibration applied to a certain element of the pencil with an holomorphic first integral with genus one, in order to find an explicit formula of the generators of the holonomy group of the foliations in such pencils.  In this work, we deal with the case when the pencil $\Pc$ has an element with an holomorphic first integral with genus zero. Our first result is about the generators of the holonomy group of the foliations $\F_{\alpha}\in\Pc$ with isolated singularities. In the following theorem, $IS(\Pc)$ is the set of indices where the associated foliation has isolated singularities.

\begin{theoremx}
Let $\Pc=\{\F_{\alpha}\}_{\alpha\in\Ce}$ be a flat pencil on a compact complex surface $X$ such that $\F_{\infty}$ has an holomorphic first integral $f:X\to \P^1$ and $\Delta(\Pc)$ is invariant. If $\gen(f)=0$ then, for any $\alpha\in IS(\Pc)$, the global holonomy group $G_{\alpha}$ of $\F_{\alpha}$ is finitely generated by $f_{1,\alpha},\ldots,f_{k,\alpha}$, where these generators could be either
\[
f_{j,\alpha}(z)=\lambda_jz+a_j\alpha +b_j
,\qquad j=1,\ldots,k,
\]
or
\[
f_{j,\alpha}(z)=\exp(2\pi i(\mu_j\alpha+\nu_j))z,\qquad j=1,\ldots,k.
\]
\end{theoremx}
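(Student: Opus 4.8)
The plan is to place the hypotheses into the setting of foliations transverse to a fibration (Riccati foliations) and then read the holonomy off the \emph{linearity} of the pencil.

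\textbf{Reduction to a Riccati foliation.} Write the pencil as $\eta_\alpha=\eta_0+\alpha\,\eta_\infty$, with $\eta_\infty$ a defining form of $\F_\infty$. Since $\eta_\alpha\wedge\eta_\infty=\eta_0\wedge\eta_\infty$, the tangency divisor $\tang(\F_\alpha,\F_\infty)$ is independent of $\alpha$ and equals $\Delta(\Pc)$. As $\gen(f)=0$, the generic fibre $F$ of $f\colon X\to\P^1$ is a smooth rational curve $F\cong\P^1$. Decompose $\Delta(\Pc)=\Delta_v\cup\Delta_h$ into the components contracted by $f$ (a finite union of fibres) and the ones dominating $\P^1$; by hypothesis $\Delta_h$ is invariant by $\F_\alpha$ for every $\alpha\in IS(\Pc)$. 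Let $S\subset\P^1$ be the finite set of critical values of $f$ together with the images of $\Delta_v$. Then, off $\Delta_h$ and off $f^{-1}(S)$, every $\F_\alpha$ with $\alpha\in IS(\Pc)$ is transverse to the fibres of $f$, so $\F_\alpha$ is a Riccati foliation with respect to $f$. Consequently it carries a global holonomy representation $\rho_\alpha\colon\pi_1(\P^1\setminus S,c_0)\to\aut(F_{c_0})\cong PGL(2,\C)$, and $G_\alpha=\mathrm{Im}(\rho_\alpha)$; since $\pi_1(\P^1\setminus S)$ is free of rank $k:=\#S-1$, this already yields finite generation, with generators $f_{j,\alpha}=\rho_\alpha(\gamma_j)$, $j=1,\dots,k$, the $\gamma_j$ being the standard loops around the points of $S$.

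\textbf{Shape of the generators.} Because $\Delta_h$ is $\F_\alpha$-invariant, $G_\alpha$ preserves the finite set $P_\alpha:=\Delta_h\cap F_{c_0}\subset F_{c_0}\cong\P^1$. First I would check, using flatness of $\Pc$ and $\Delta(\Pc)\neq\emptyset$, that $P_\alpha$ is nonempty. If $\#P_\alpha\geq 3$ the group $G_\alpha$ is trivial and there is nothing to prove. If $\#P_\alpha=1$, conjugating that point to $\infty\in\P^1$ puts $G_\alpha$ inside the affine group $\aut(\C)=\{z\mapsto\lambda z+b\}$. If $\#P_\alpha=2$, conjugating the two points to $\{0,\infty\}$ and passing (if necessary) to the index-two subgroup fixing each of them — which here is all of $G_\alpha$, the two branches of $\Delta_h$ being distinguished — puts $G_\alpha$ inside $\{z\mapsto\lambda z\}\cong\C^{*}$. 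I expect to prove that, under the standing hypotheses, exactly one of these last two regimes occurs, and that these are precisely the two cases of the statement.

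\textbf{Dependence on $\alpha$.} Work in a trivialising chart with coordinates $(z,w)$, $w=f-c_0$, and $z$ a fibre coordinate with $P_\alpha$ at $z=\infty$ (affine case) or at $z=0,\infty$ (rotational case). Since $\eta_\infty=g\,df=g\,dw$ has no $dz$-component, the $dz$-coefficient of $\eta_\alpha$ coincides with that of $\eta_0$ and is independent of $\alpha$; combining this with the invariance of $\Delta_h$ one obtains for $\F_\alpha$ a Riccati equation $dz/dw=b_\alpha(w)z+c_\alpha(w)$ in the affine case and $dz/dw=b_\alpha(w)z$ in the rotational case, with $b_\alpha=b_0+\alpha b_1$ and $c_\alpha=c_0+\alpha c_1$ affine in $\alpha$. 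In the rotational case the holonomy around $\gamma_j$ is $z\mapsto\exp\!\big(\oint_{\gamma_j}b_\alpha\,dw\big)z=\exp\!\big(2\pi i(\mu_j\alpha+\nu_j)\big)z$, with $2\pi i\mu_j=\oint_{\gamma_j}b_1\,dw$ and $2\pi i\nu_j=\oint_{\gamma_j}b_0\,dw$. In the affine case one must show in addition that $\oint_{\gamma_j}b_1\,dw=0$ for all $j$, so that the multiplier $\lambda_j=\exp\!\big(\oint_{\gamma_j}b_0\,dw\big)$ does not depend on $\alpha$; granting this, variation of parameters gives the holonomy $z\mapsto\lambda_j z+B_j(\alpha)$, with $B_j(\alpha)=\lambda_j\oint_{\gamma_j}\Lambda(w)^{-1}c_\alpha(w)\,dw$ affine in $\alpha$, i.e. of the form $\lambda_j z+a_j\alpha+b_j$ as claimed ($\Lambda$ denoting a primitive of $\exp(\int b_0)$).

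\textbf{Main obstacle.} The crux is the affine case of the last step: the vanishing of the periods of the $z$-linear part of the perturbation $\eta_\infty/(dz\text{-coefficient of }\eta_0)$, which is exactly what keeps the multipliers $\lambda_j$ fixed as $\alpha$ varies. Establishing it forces one to use simultaneously that $\eta_\infty=g\,df$ comes from the holomorphic first integral $f$, that the components of $\Delta_h$ are sections (or a multisection) of $f$ common to every foliation of the pencil, and the flatness of $\Pc$; the remainder is bookkeeping with Riccati equations and with $\pi_1$ of a punctured sphere. A secondary delicate point is the trichotomy in the second step: showing that $P_\alpha$ is nonempty and has one or two points, and ruling out the swap of the two points when $\#P_\alpha=2$.
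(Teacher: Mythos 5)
There is a genuine gap, and you in fact flag it yourself: the ``main obstacle'' paragraph is exactly the point where the hypothesis of flatness must be used, and your proposal never uses it. In the paper's argument one writes, in a trivializing chart along the lifted loop, $\F_{\alpha}:\ dy+\alpha P(x,y)\,dx=0$ with $P(x,y)=\sum_{i=0}^{r}a_i(x)y^i$, $r\le 2$ (the Riccati shape you also obtain), and then the vanishing of the curvature forces all coefficients to be proportional, $a_i(x)=\nu_i\,a(x)$, i.e.\ $P(x,y)=a(x)p_2(y)$ with $p_2$ of degree at most two and independent of $x$. After a M\"obius change in the fibre this leaves only two normal forms: $p_2\equiv\lambda$ (a single horizontal component $\{y=\infty\}\subset\sing(\F_{\infty})$ of multiplicity $2$), in which case $\omega_0+\alpha\omega_{\infty}=d\bigl(y+\alpha\lambda\int a\bigr)$ is closed and the holonomy is $z\mapsto\lambda_j z+a_j\alpha+b_j$; or $p_2(y)=y$ (two horizontal components $\{y=0\},\{y=\infty\}$ of multiplicity $1$), in which case $(\omega_0+\alpha\omega_{\infty})/y=dy/y+\alpha a(x)dx$ is closed and the holonomy is $z\mapsto\exp(2\pi i(\mu_j\alpha+\nu_j))z$. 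So flatness does not merely kill the \emph{periods} of the $z$-linear part $b_1$ of the perturbation, as you hope to show: it kills $b_1$ identically (separation of variables). This is essential, because your own variation-of-constants formula shows that vanishing of $\oint_{\gamma_j}b_1\,dw$ alone is not enough: if $b_1\not\equiv 0$, the integrating factor $\Lambda_{\alpha}$ depends on $\alpha$ at interior points of the loop, and the translation part $B_j(\alpha)$ is then a genuinely transcendental function of $\alpha$, not affine, so even ``granting'' your period claim the stated normal form would not follow.

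Two secondary points. Your trichotomy on $P_{\alpha}=\Delta_h\cap F_{c_0}$ is also left unestablished, and as stated it contains a slip: a subgroup of $\aut(\P^1)$ preserving a set of at least three points is finite, not necessarily trivial, and such a finite group need not be simultaneously affine or multiplicative in a fibre coordinate; likewise the possibility that an irreducible horizontal component is a $2$-section (so that the monodromy swaps the two points of $P_{\alpha}$) is asserted away rather than excluded. In the paper both issues disappear automatically from the flatness computation: the horizontal part of $\Delta(\Pc)$ over the loop is the divisor of $p_2$ on the $\P^1$-fibre, hence has total multiplicity two and consists of components of $\Delta(\Pc)$ contained in $\sing(\F_{\infty})$, independent of $\alpha$, leaving precisely the two cases above. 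In short, your scaffolding (transversality from invariance of $\Delta(\Pc)$, Riccati structure, holonomy representation of the free group $\Pi_1(W,c)$, constraints from invariant horizontal components) matches the paper, but the decisive step --- where $\Theta(\Pc)\equiv 0$ is converted into the local closed-form models $d(Y_n+\alpha X_n)$, resp.\ $dy/y+\alpha a(x)dx$ --- is missing, and it is exactly the step that yields both the $\alpha$-independence of the multipliers and the affine dependence on $\alpha$ of the remaining data.
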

The following theorem is our main result. In this theorem, under the same conditions of the previous theorem, we prove that $X$ is a rational surface  and characterize the set $I_p(\Pc)$. 

\begin{theoremx}
Let $\Pc=\{\F_{\alpha}\}_{\alpha\in\Ce}$ be a flat pencil on a compact complex surface $X$ such that $\F_{\infty}$ has an holomorphic first integral $f:X\to \P^1$ and $\Delta(\Pc)$ is invariant. If $\gen(f)=0$ then $X$ is a rational surface where one of the following hold:
\begin{enumerate}
 \item $I_p(\Pc)$ is finite,
 \item $IS(\Pc)\subset I_p(\Pc)$, or
 \item $I_p(\Pc)\cap IS(\Pc)=\Q\cap IS(\Pc)$, up to a reparametrization of the pencil.
\end{enumerate}

\end{theoremx}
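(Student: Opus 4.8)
\emph{Overview and rationality.} The plan is to combine the description of the global holonomy groups $G_{\alpha}$ furnished by the previous theorem with the classical dictionary between meromorphic first integrals and finiteness of holonomy for foliations transverse to a fibration. First, since $\F_{\infty}$ has the holomorphic first integral $f\colon X\to\P^1$, its leaves are the components of the fibers of $f$; as $\gen(f)=0$ the generic fiber is a smooth rational curve, so $X$ is a (possibly blown up) ruled surface over $\P^1$, hence a rational — in particular projective — surface, and its meromorphic functions are rational. Recall also (from the construction of a pencil) that $\Ce\setminus IS(\Pc)$ is finite, so that for the trichotomy it is enough to analyse $I_p(\Pc)\cap IS(\Pc)$, keeping in mind that $\infty\in I_p(\Pc)$ always.

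\emph{The holonomy criterion.} Fix $\alpha\in IS(\Pc)\cap\C$. Since $\Delta(\Pc)$ is invariant and equals the tangency set of $\F_{\alpha}$ with $\F_{\infty}=f$, deleting from $X$ the finitely many fibers of $f$ contained in $\Delta(\Pc)$ together with the finitely many singular fibers of $f$ leaves an open set $X_0$ on which $\F_{\alpha}$ is regular and everywhere transverse to the $\P^1$-fibration $f$; there $\F_{\alpha}$ is the suspension of a representation of $\pi_1$ of the punctured base with image the global holonomy group $G_{\alpha}\leq\mathrm{PGL}_2(\C)$. I claim that $\F_{\alpha}$ has a meromorphic first integral on $X$ if and only if $G_{\alpha}$ is finite. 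If $G_{\alpha}$ is finite, all leaves are closed, the leaf space of $\F_{\alpha}|_{X_0}$ is the smooth rational curve $\P^1/G_{\alpha}$, and the associated holomorphic first integral on $X_0$, being bounded (it takes values in $\P^1$), extends meromorphically across the deleted fibers to a first integral of $\F_{\alpha}$ on $X$. Conversely, if $\F_{\alpha}$ has a meromorphic — hence rational — first integral, then after a Stein factorisation its generic level is an irreducible $\F_{\alpha}$-invariant algebraic curve $L$ not contained in a fibre of $f$; thus $f|_L$ is finite, $L$ meets a generic fibre $F_0\cong\P^1$ in a finite $G_{\alpha}$-invariant set, and letting the level vary shows that the $G_{\alpha}$-orbit of a generic point of $F_0$ is finite. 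Since an infinite subgroup of $\mathrm{PGL}_2(\C)$ has a Zariski-dense, hence infinite, orbit on $\P^1$, we conclude that $G_{\alpha}$ is finite. Hence $I_p(\Pc)\cap IS(\Pc)\cap\C=\{\alpha\in IS(\Pc)\cap\C: G_{\alpha}\text{ is finite}\}$.

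\emph{The affine case.} Insert now the normal forms from the previous theorem. If the generators are $f_{j,\alpha}(z)=\lambda_j z+a_j\alpha+b_j$, then $G_{\alpha}\leq\mathrm{Aff}(\C)$, and a finitely generated subgroup of $\mathrm{Aff}(\C)$ is finite precisely when it has a common fixed point and all its multipliers are roots of unity. The condition that every $\lambda_j$ be a root of unity does not depend on $\alpha$, while existence of a common fixed point of $f_{1,\alpha},\dots,f_{k,\alpha}$ is expressed by finitely many equations that are affine in $\alpha$. Therefore either $G_{\alpha}$ is finite for no $\alpha$ — and then $I_p(\Pc)\cap IS(\Pc)\subseteq\{\infty\}$, so $I_p(\Pc)$ is finite, case (1); or for every $\alpha$ — case (2); or for only finitely many $\alpha$ — again case (1). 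In particular case (3) cannot occur in the affine case.

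\emph{The linear case and conclusion.} If the generators are $f_{j,\alpha}(z)=\exp(2\pi i(\mu_j\alpha+\nu_j))z$, then $G_{\alpha}$ is the subgroup of $\C^{*}$ generated by the numbers $\exp(2\pi i(\mu_j\alpha+\nu_j))$, which is finite iff $\mu_j\alpha+\nu_j\in\Q$ for every $j$. If some $\mu_j=0$ with $\nu_j\notin\Q$, this never happens and $I_p(\Pc)$ is finite (case (1)). Otherwise one uses the elementary fact that, for $\mu_1,\mu_2\neq0$, the set $\{\alpha:\mu_1\alpha+\nu_1\in\Q\}\cap\{\alpha:\mu_2\alpha+\nu_2\in\Q\}$ has at most one element unless $\mu_2/\mu_1\in\Q$ and $\nu_2-(\mu_2/\mu_1)\nu_1\in\Q$, in which case the two sets coincide. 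Iterating over $j$ shows that $\{\alpha: G_{\alpha}\text{ finite}\}$ is either finite (case (1)); or all of $\Ce$, when every $\mu_j=0$ and every $\nu_j\in\Q$, giving $IS(\Pc)\subseteq I_p(\Pc)$ (case (2)); or, after the affine reparametrisation $\alpha\mapsto\mu_{j_0}\alpha+\nu_{j_0}$ with $\mu_{j_0}\neq0$, equal to $\Q$, giving $I_p(\Pc)\cap IS(\Pc)=\Q\cap IS(\Pc)$ (case (3)). Combined with the finiteness of $\Ce\setminus IS(\Pc)$, this yields the stated trichotomy. The principal difficulty is the holonomy criterion: rigorously extracting the transverse (suspension) structure of $\F_{\alpha}$ over $X_0$ from the invariance of $\Delta(\Pc)$, and proving both implications of the first integral/finite holonomy equivalence, including the meromorphic extension across the invariant and singular fibres of $f$; the remaining steps are then elementary.
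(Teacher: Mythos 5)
Your overall strategy mirrors the paper's (normal forms for the generators of $G_{\alpha}$ from the preceding theorem, plus the dictionary ``$\alpha\in I_p(\Pc)\iff G_{\alpha}$ finite'', plus an elementary analysis of the finiteness locus in $\alpha$), and your treatment of the affine and linear cases, as well as the rationality of $X$, is essentially sound. The genuine gap is in the forward half of your ``holonomy criterion'': you assert that if $G_{\alpha}$ is finite then the first integral of $\F_{\alpha}|_{X_0}$ ``being bounded (it takes values in $\P^1$), extends meromorphically across the deleted fibers''. Taking values in a compact target does not give meromorphic extension across an analytic curve (think of $e^{1/x}$ viewed as a map to $\P^1$), and in fact the bare implication ``finite global holonomy $\Rightarrow$ meromorphic first integral'' is \emph{false} for Riccati foliations: the foliation $x^2\,dy-y\,dx=0$ on $\P^1\times\P^1$ has trivial monodromy around its invariant fibres but no meromorphic first integral (its leaves $y=Ce^{-1/x}$ have non-analytic closures, due to the saddle--node along $x=0$). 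So the extension step cannot be dismissed in one line: one must control the local behaviour of $\F_{\alpha}$ along the invariant components of $\Delta(\Pc)$ and at the singularities.

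This is exactly where the paper does real work: assuming $I_p(\Pc)$ is infinite it normalizes $0\in I_p(\Pc)\cap IS(\Pc)$ and, following Brunella's Proposition~2 argument, uses the flatness, the Riccati structure and the invariance of $\Delta(\Pc)$ to show that every $\F_{\alpha}$ with $\alpha\in IS(\Pc)$ has a meromorphic local first integral at each singularity and $Z(\F_{\alpha},C)\geq 1$ for every component $C$ of $\Delta(\Pc)$; only with these hypotheses (Lemma~\ref{lemagen01}) does finiteness of $G_{\alpha}$ force the leaf closures to be compact invariant curves, whence a meromorphic first integral by the Jouanolou--Ghys theorem cited as~\cite{E2}. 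Your cases (2) and (3) need precisely this implication (the reverse inclusion $\supseteq$), so without reproducing this local analysis your proof establishes only the inclusions $I_p(\Pc)\cap IS(\Pc)\subseteq\{\alpha:G_{\alpha}\text{ finite}\}$, not the stated equalities. (A minor, fixable slip in your converse direction: you need that an \emph{infinite} subgroup of $\mathrm{PGL}_2(\C)$ has infinite orbits at all but at most two points, not merely one infinite orbit, since the first integral only controls generic points of the fibre; this follows by noting that a point with finite orbit is fixed by the identity component of the Zariski closure of $G_{\alpha}$.)
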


\section{Preliminaries}\label{sec:prelim}
In this section we recall some basic concepts needed for later sections.  
Throughout this section, $X$ will denote a compact complex surface, unless otherwise stated.

An \emph{holomorphic foliation} $\F$ on $X$ is given by a family of holomorphic 1-forms $\{\omega_i\}_{i\in I}$ defined over an open covering $\U=\{U_i\}_{i\in I}$ of $X$ such that $\omega_i= g_{ij}\omega_j$,
with $g_{ij}\in \Og^*(U_{ij})$, where $U_{ij}=U_i \cap U_j\neq \emptyset$.
The \emph{singular set} of $\F$, is the analytic set $\sing(\F)$ such that $\sing(\F)\cap U_i=\{w_{i}=0\}$.	
The \emph{multiplicative cocycles} $\{g_{ij}\}_{i,j\in I}$ define the linear fibration $N_{\F}$ on $X$, which will be called the \emph{normal fibration} of $\F$.

A foliation $\F$ can also be defined by a family of holomorphic vector fields $X_i$, defined over an open covering $\U=\{U_i\}_{i\in I}$ of $X$ such that $X_i= f_{ij}X_j$, with $f_{ij}\in \Og^*(U_{ij})$, where $U_{ij}=U_i \cap U_j\neq \emptyset$.
Thus the \emph{tangent bundle} of $\F$ on $X$, denoted by $T_{\F}$, is defined by the multiplicative cocycles $\{f_{ij}^{-1}\}_{i,j\in I}$.

Let $N_{\F}^*$ and $T_{\F}^*$ be the dual fibrations of $N_{\F}$ and $T_{\F}$, respectively. Then~\cite[Lemme~1]{BR3}:
\[
K_X=N_{\F}^*\otimes T_{\F}^*,
\]
where $K_X$ is the \emph{canonical bundle} of $X$. 

We now recall the \emph{Poincar\'e-Hopf} and \emph{Baum-Bott} indices, defined by Brunella~\cite{BR2}, see also~\cite{MR0261635,BR3,BR1}.
Let $p\in X$ be an isolated singularity of $\F$ and let $(x,y,U)$ be a coordinate system with $p\in U$ and $x(p)=y(p)=0$, such that $\F$ is represented in $U$ by
\[
Y(x,y)=P(x,y)\dfrac{\partial}{\partial x}+Q(x,y)\dfrac{\partial}{\partial y},
\]
where $P$ and $Q$ are holomorphic over $U$ and $\gcd(P,Q)=1$.  In addition, let $J$ be the Jacobian matrix of $(P,Q)$ at $(0,0)$.  The \emph{Poincar\'e-Hopf index} of $\F$ at $p$ is defined as
\[
PH(\F,p)=\Res_{(0,0)}\dfrac{\det J}{P\cdot Q}dx\wedge dy.
\]
It is well known that this index coincides with the \emph{Milnor number} of $Y$ at $p$, that is,
\[
PH(\F,p)=\dim_{\C}\dfrac{\Og_p}{\langle P,Q\rangle}.
\]
The \emph{Baum-Bott index} of $\F$ at $p$, is defined as
\[
BB(\F,p)=\Res_{(0,0)}\dfrac{(\Tr J)^2}{P\cdot Q}dx\wedge dy.
\]
Denote $m(\F)=\ds\sum_{p\in\sing(\F)}PH(\F,p)$ and $BB(\F)=\ds\sum_{p\in\sing(\F)}BB(\F,p)$. 

The following formulas can be found in \cite[\S 2]{BR3}, Proposition~1, and \cite[p.~34]{BR1}, respectively:
\begin{gather}
m(\F)=c_2(X)+BB(\F)-N_{\F}\cdot K_X,\label{eq:newmf}\\
BB(\F)=N_{\F}\cdot N_{\F}.\label{eq:newbbf}
\end{gather}

Let $C\subset X$ a curve which is non-invariant by $\F$. Given $p\in C$, let $f=0$ a local reduced equation of $C$ and  $\F$ represented by  $Y$ an holomorphic field  in some neighborhood $U$ of $p$.
The \emph{tangency index} of $\F$ with respect to $C$ at $p$ is defined as
\[
\tang(\F,C,p)=\dim_{\C}\dfrac{\Og_p}{\langle f,Y(f)\rangle}.
\]
Moreover,
\[ 
\tang(\F,C)=\sum_{p\in\sing(\F)\cap C}\tang(\F,C,p).
\]
In addition, we have the following formulas~\cite[Lemme~2]{BR3}:
\begin{gather*}
N_{\F}\cdot C=\mathcal{X}(C)+\tang(\F,C),\\
T_{\F}\cdot C=C\cdot C-\tang(\F,C).
\end{gather*}

When $C$ is invariant by $\F$, for any $p\in C$, there is also the \emph{GSV index} $Z(\F,C,p)$, see \cite[\S 3]{BR2} and \cite[p.~24]{BR1} for details. Let $\{f=0\}$ be a local equation of $C$ around $p$, and let $\omega$ be an holomorphic 1-form generating $\F$ around $p$. Because $C$ is $\F$-invariant, we can factorize $\omega$ around $p$ as
\[
g\omega=hdf+f\eta,
\]
where $\eta$ is an holomorphic 1-form, $g$ and $h$ are holomorphic functions, and $h,f$ (and therefore $g,f$) are relatively prime, that is, $h$ (and therefore $g$) does not vanish identically on each branch of $C$. Thus, the \emph{GSV index} is defined as
\begin{align*}
Z(\F,C,p)&=\text{vanishing order of }\dfrac{h}{g}\bigg|_C\text{ at }p\\
&=\sum_{i}\text{vanishing order of }\dfrac{h}{g}\bigg|_{C_i}\text{ at }p
\end{align*}
Denote, in addition,
\[
Z(\F,C)=\sum_{p\in\sing(\F)\cap C}Z(\F,C,p)
\]
We also have the following formulas~\cite[Lemme~3]{BR3}:
\begin{align*}
N_{\F}\cdot C&=C\cdot C+Z(\F,C),\\
T_{\F}\cdot C&=\chi(C)-Z(\F,C).
\end{align*}

A \emph{fibration} on $X$ is a surjective holomorphic map $\varphi:X\to S$ over a Riemann surface such that $\varphi^{-1}(c)$ is connected for generic $c$. Let $CV(\varphi)$ be the set of critical values of $\varphi$. Given $c\in S$ denote $\varphi_c=\varphi^{-1}(c)$ and $V=\varphi^{-1}(CV(\varphi))$. By Ehresman fibration theorem~\cite{EhR}, the map 
\[
\varphi|_{X\setminus V}:X\setminus V\to S\setminus CV(\varphi) 
\]
is a locally trivial $C^{\infty}$ bundle. Then $\chi(\varphi_c)=\chi(\varphi_{c'})$, for every $c,c'\in S\setminus CV(\varphi)$. 
Therefore, the \emph{genus} of $\varphi$ is $\gen(\varphi)=\gen(\varphi_c)$, for any $c\in S\setminus CV(\varphi)$. When $\gen(\varphi)=0$ (respectively, $\gen(\varphi)=1$) the fibration $\varphi$ is called \emph{rational} (respectively, \emph{elliptic}). 

A foliation $\F$ on $X$ is \emph{tangent} to a fibration $\varphi$ if the leaves of $\F$ are the fibers of $\varphi$. Moreover, a foliation is called a \emph{rational} (respectively, \emph{elliptic}) \emph{fibration} if it is tangent to a rational (respectively, elliptic) fibration.

A foliation $\F$ on $X$ is called a \emph{Riccati foliation} (respectively, \emph{turbulent foliation}), if there exists a rational (respectively, elliptic) fibration whose generic fibers are transversal to $\F$.

Let $\F$ be a foliation on $X$. An \emph{holomorphic} (respectively, \emph{meromorphic}) \emph{first integral} of $\F$ is a non-constant holomorphic (respectively, meromorphic) map $f:X\to S$, where $S$  is a Riemann surface, and for all $c\in S$, $f^{-1}(c)$ is a union of leaves and singularities of $\F$.
We will assume that the generic level curve of $f$ is irreducible. This implies that any holomorphic first integral of a foliation is a fibration.

Let $f$ be a first integral of a foliation $\F$. When $f$ is holomorphic, the \emph{genus} of $f$ is the genus of $f$ considered as a fibration. On the other hand, when $f$ is meromorphic, the genus of $f$ is the genus of the fibration obtained after a finite number of blow-ups on the singularities of $f$.

Let $T$ be a two dimensional complex torus.  The following result is due to Ghys~\cite{E}.
\begin{proposition}\label{pro:new:ghys}
Let $\F$ be a regular foliation on $T$. Then there exists a covering $\pi:\C^2\to T$ such that $\F$ is induced by the closed 1-form $b(x)dx+dy$, where $b(x)$ is either constant or an elliptic function.
\end{proposition}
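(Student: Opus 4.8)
The plan is to present $\F$ by a single global \emph{twisted} $1$-form on $T$, to use the index formulas of the previous section to pin down its normal bundle, and then to split into a constant-direction case (yielding $b$ constant) and a varying-direction case (yielding $b$ elliptic), the latter by showing that the direction field of $\F$ factors through an elliptic quotient of $T$. First I would exploit that the cotangent bundle of a torus is holomorphically trivial: writing $T=\C^2/\Lambda$ with flat coordinates and global holomorphic $1$-forms $dx,dy$, one has $\Omega^1_T\cong\Og_T\,dx\oplus\Og_T\,dy$. Hence the local $1$-forms defining $\F$ glue to a global section $\omega=\omega_1\,dx+\omega_2\,dy$ of $\Omega^1_T\otimes N_\F$ with $\omega_1,\omega_2\in H^0(T,N_\F)$, and regularity of $\F$ is equivalent to $\omega_1,\omega_2$ having no common zero. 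Since $c_2(T)=0$, $K_T=\Og_T$, and $m(\F)=0$ because $\sing(\F)=\emptyset$, the formulas \eqref{eq:newmf} and \eqref{eq:newbbf} force $N_\F\cdot N_\F=BB(\F)=0$; thus $c_1(N_\F)^2=0$, while $\omega\neq 0$ shows that $N_\F$ is effective.

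The key dichotomy is on the class $c_1(N_\F)$. If $c_1(N_\F)=0$, then $N_\F\in\mathrm{Pic}^0(T)$ is effective, which forces $N_\F\cong\Og_T$; then $\omega_1,\omega_2$ are global holomorphic functions on the compact surface $T$, hence constant, and lifting to the cover $\pi:\C^2\to T$ gives $\F=\ker(\omega_1\,dx+\omega_2\,dy)$ with constant coefficients. Relabelling the coordinates so that the $dy$-coefficient is nonzero and dividing by it, this becomes $\ker(b\,dx+dy)$ with $b$ constant, which is closed. If instead $c_1(N_\F)\neq 0$ while $c_1(N_\F)^2=0$, the base-point-free pair $(\omega_1,\omega_2)$ defines a nonconstant Gauss map $\bar\gamma=[\omega_1:\omega_2]:T\to\P^1$ with $N_\F=\bar\gamma^{*}\Og_{\P^1}(1)$, recording at each point the tangent direction of $\F$.

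The crux is to analyze this nonconstant map. I would Stein-factorize $\bar\gamma=g\circ q$ with $q:T\to C$ having connected fibers onto a smooth curve and $g:C\to\P^1$ finite; the induced surjection $\Z^4=\pi_1(T)\twoheadrightarrow\pi_1(C)$ forces $\pi_1(C)$ to be abelian, so $\gen(C)\le 1$, and the rigidity lemma (a connected-fiber surjection from a torus onto a curve is, up to translation, a quotient by a subtorus) rules out $\gen(C)=0$. Therefore $C=E$ is an elliptic curve, $q:T\to E$ is the quotient by an elliptic subtorus $E'=\ker q$, and $\bar\gamma$ factors as $T\xrightarrow{q}E\xrightarrow{g}\P^1$ with $g$ nonconstant; in the non-algebraic case the same conclusion follows from the algebraic reduction of $T$, which for a $2$-torus of algebraic dimension $\ge 1$ is an elliptic fibration over an elliptic curve. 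I would then choose linear coordinates $(x,y)$ on $\C^2$ so that $E'$ is the $\partial/\partial y$-direction and $q$ becomes $(x,y)\mapsto x\bmod\Lambda_E$. Along the fibers of $q$ the direction of $\F$ is constant, so the slope $b:=\omega_1/\omega_2$ is a function of $\bar\gamma$ alone and hence depends only on $x$; being $\Lambda_E$-periodic and meromorphic, $b(x)$ is an elliptic function. Writing $\omega=\omega_2\,(b(x)\,dx+dy)$ and discarding the nonvanishing factor, whose zeros are exactly the poles of $b$, gives $\F=\ker(b(x)\,dx+dy)$, a closed $1$-form, as claimed.

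I expect the main obstacle to be the middle step of the last paragraph, namely proving that a nonconstant holomorphic map from $T$ to $\P^1$ must factor through a quotient of $T$ by an elliptic subtorus, equivalently that the isotropic effective class $c_1(N_\F)$ arises from an elliptic fibration. This is precisely what pins down the one-variable elliptic normal form and excludes a genuinely two-variable direction field; once it is established, the choice of linear coordinates and the verification that $b$ is elliptic are routine.
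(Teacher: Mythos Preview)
The paper does not prove Proposition~\ref{pro:new:ghys}; it attributes the result to Ghys~\cite{E} with no argument given, so there is nothing to compare your attempt against directly. Your proposal supplies a self-contained proof and is essentially correct: the triviality of $\Omega^1_T$ globalizes the defining form as $\omega_1\,dx+\omega_2\,dy$ with $\omega_i\in H^0(T,N_\F)$, the index formulas~\eqref{eq:newmf}--\eqref{eq:newbbf} together with $c_2(T)=0$ and $K_T=\Og_T$ give $N_\F^2=0$, and the dichotomy on $c_1(N_\F)$ is the right split. The one step worth sharpening is the exclusion of $\gen(C)=0$ in the Stein factorization: the ``rigidity lemma'' you invoke is not a standard formulation, and it is cleaner to argue directly that a connected-fiber surjection $q:T\to\P^1$ would have generic fiber an elliptic curve (adjunction, since $K_T=\Og_T$ and $F^2=0$), whence the homotopy sequence $\pi_1(F)\to\pi_1(T)\to\pi_1(\P^1)=1$ would force a surjection $\Z^2\twoheadrightarrow\Z^4$, which is impossible. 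With $C=E$ elliptic, lifting to universal covers shows $q$ is affine, hence a quotient by a subtorus after translation, and your identification of $b=\omega_1/\omega_2$ as a meromorphic function on $E$ goes through as written.
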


The set of linear foliations on $T$ is naturally identified with the one-dimensional projective space $\P H^0(T,\Omega^1_T)$. Let $I(T)$ be the set of linear foliations in $\P H^0(T,\Omega^1_T)$ which admit an holomorphic first integral and let $i(t)$ be its cardinality. 
The following proposition is a result due to Pereira and Pirio~\cite[Proposition~11.1]{JVPir}, adapted for our purposes.

\begin{proposition}\label{pro:new:vitorio}
If $i(T)\geq 3$ then $i(T)=\infty$ and there exists an elliptic curve $E=\C/\langle 1,\tau\rangle$ such that $T=E\times E$. Moreover, if $\omega_1,\omega_2$ are linearly independent 1-forms on $T$ admitting rational first integrals, then
\begin{equation}\label{eq:idx}
\{\lambda\in\C\::\:\omega_1+\lambda\omega_2\text{ has an holomorphic first integral}\}
=
\End(E)\otimes\Q,
\end{equation}
where $\End(E)\otimes\Q$ is either $\Q$ or $\Q(\tau)$.
\end{proposition}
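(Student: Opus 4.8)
The plan is to reinterpret $i(T)$ as a count of one-dimensional subtori of $T$ and then apply the structure theory of complex tori. Write $T=V/\Lambda$ with $V=\C^2$ and $\Lambda$ a lattice of rank $4$, and identify $H^0(T,\Omega^1_T)$ with the space of $\C$-linear forms on $V$. A form $\omega$ defines a linear foliation whose leaves are the translates of the line $L=\ker\omega\subset V$; this foliation admits a holomorphic first integral exactly when its leaves are compact, that is, when $L\cap\Lambda$ has rank $2$. In that case $A_\omega:=L/(L\cap\Lambda)$ is a one-dimensional subtorus of $T$ and the first integral is the projection $T\to T/A_\omega$ onto the quotient elliptic curve. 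Hence $i(T)$ equals the number of one-dimensional subtori of $T$, and the kernel directions of $\omega_1,\omega_2$ give two such subtori $A_1,A_2$ with distinct directions $L_1\ne L_2$.

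Next I would produce the product structure. Since $L_1\ne L_2$ we have $V=L_1\oplus L_2$, so the rank-$2$ sublattices $\Lambda_i=L_i\cap\Lambda$ satisfy $\Lambda_1\oplus\Lambda_2\subseteq\Lambda$ with finite index; therefore the addition map $A_1\times A_2\to T$ is an isogeny and $T$ is isogenous to a product of two elliptic curves. Now use the third subtorus $A_3$, whose direction $L_3$ differs from $L_1,L_2$: transporting it through this isogeny yields a one-dimensional subtorus of $A_1\times A_2$ distinct from both coordinate factors, hence the graph of a nonzero homomorphism $\phi\in\operatorname{Hom}(A_1,A_2)$. A nonzero homomorphism of elliptic curves is an isogeny, so $A_1$ and $A_2$ are isogenous; moreover the graphs of $n\phi$ for $n\in\Z$ are infinitely many pairwise distinct subtori, which pushed forward to $T$ give $i(T)=\infty$.

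The main obstacle is to upgrade ``isogenous to a product of isogenous elliptic curves'' to an honest isomorphism $T\cong E\times E$. Here I would exploit the infinitude of subtori: fixing $E:=A_1$ and the first integral $f_1:T\to B_1:=T/A_1$, the restriction $f_1|_{A}$ to any other subtorus $A$ is a nonzero homomorphism $A\to B_1$, and among the infinitely many available $A$ one selects one for which this map is an isomorphism, giving a section of $f_1$ and hence a splitting $T\cong A_1\times B_1$; a further comparison of the two factors, using that all these subtori lie in a single isogeny class, lets one arrange both factors isomorphic, so $T\cong E\times E$ with $E=\C/\langle 1,\tau\rangle$ for a suitable $\tau$. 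This is precisely the delicate normalization of Pereira--Pirio that I would cite in the form stated here. I note for honesty that the equality \eqref{eq:idx} itself needs only the product-up-to-isogeny, since its right-hand side $\End(E)\otimes\Q$ is an isogeny invariant; the sharpening to a genuine product is the part that genuinely requires the cited result.

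Finally I would verify \eqref{eq:idx} in the normal form $T=\C^2/(\Gamma\times\Gamma)$ with $\Gamma=\langle 1,\tau\rangle$, choosing $\omega_1,\omega_2$ dual to the two factors so that $\omega_1+\lambda\omega_2$ has kernel direction $(-\lambda,1)$. The first-integral condition then reads that $\{t\in\Gamma:\lambda t\in\Gamma\}=\Gamma\cap\lambda^{-1}\Gamma$ has rank $2$, i.e. $\lambda\,(\Gamma\otimes\Q)=\Gamma\otimes\Q$; since $\Gamma\otimes\Q$ is the rational structure of $E$, this says exactly $\lambda\in\End(E)\otimes\Q$. As $\End(E)$ is either $\Z$ or an order in an imaginary quadratic field, one obtains $\End(E)\otimes\Q=\Q$ or $\Q(\tau)$. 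The only routine point left is that replacing $\omega_1,\omega_2$ by another generating pair with holomorphic first integrals changes this set by a factor in $(\End(E)\otimes\Q)^{\ast}$, hence leaves it invariant, which is what underlies the ``up to reparametrization'' phrasing in the applications.
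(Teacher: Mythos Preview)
The paper does not prove this proposition: it is stated as an adaptation of \cite[Proposition~11.1]{JVPir} (Pereira--Pirio) and no argument is supplied, so there is no in-paper proof to compare your attempt against.

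That said, your sketch is a faithful reconstruction of the standard route. The identification of $i(T)$ with the number of one-dimensional subtori is correct; deducing from three distinct subtori that $T$ is isogenous to a product of two isogenous elliptic curves, hence $i(T)=\infty$, is correct; and your verification of \eqref{eq:idx} in the normal form $T=\C^2/(\Gamma\times\Gamma)$ via the condition $\lambda(\Gamma\otimes\Q)=\Gamma\otimes\Q$ is correct and cleanly isolates why the answer is $\End(E)\otimes\Q$. You are also right that the passage from ``isogenous to $E\times E$'' to an honest isomorphism $T\cong E\times E$ is the genuinely delicate step: your proposed mechanism---selecting among the infinitely many subtori one on which the quotient map $T\to T/A_1$ restricts to an isomorphism---is not self-evident (Poincar\'e reducibility only gives an isogenous splitting), and you appropriately defer to the cited source there. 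Since that is exactly what the paper does, your write-up is in line with, and in fact more detailed than, the paper's treatment.
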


\begin{proposition}\label{pro:24}
Let $X$ be a Hopf surface. Then
\[
\{\alpha\in\C\::\:dy-\alpha dx\text{ has an holomorphic first integral}\}
=\emptyset.
\]
\end{proposition}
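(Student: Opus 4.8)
The plan is to lift everything to the universal cover. Recall that a Hopf surface $X$ can be written as $X=(\C^2\setminus\{0\})/G$, where $G$ is a group acting freely and properly discontinuously on $\C^2\setminus\{0\}$ which contains a \emph{contraction}, i.e.\ an element $g$ with $\lim_{n\to\infty}g^n(p)=0$ (in $\C^2$) for every $p\in\C^2\setminus\{0\}$; let $\pi\colon\C^2\setminus\{0\}\to X$ denote the quotient projection. Fix $\alpha\in\C$ and suppose, aiming at a contradiction, that the foliation $\F$ on $X$ defined by $dy-\alpha dx=0$ admits a holomorphic first integral $f\colon X\to S$. Put $\tilde f=f\circ\pi$ and $\ell(x,y)=y-\alpha x$, so that $d\ell=dy-\alpha dx$ and the leaves of the lifted foliation $\pi^{*}\F$ on $\C^2\setminus\{0\}$ are precisely the sets $\{\ell=t\}\cap(\C^2\setminus\{0\})$, $t\in\C$; for $t\neq0$ such a leaf is a copy of $\C$, and for $t=0$ it is the connected set $\{\ell=0\}\setminus\{0\}\cong\C^{*}$.

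The first step is to show that $\tilde f$ factors through $\ell$. Since $\pi$ is an \'etale covering and $f$ is a first integral of $\F$ (which is regular, as $dy-\alpha dx$ is nowhere zero), $\tilde f$ is constant on each leaf of $\pi^{*}\F$; by the connectedness remark above this yields a well-defined map $\phi\colon\C\to S$ with $\tilde f=\phi\circ\ell$. Because $\ell$ is a submersion, composing with a local holomorphic section of $\ell$ shows that $\phi$ is holomorphic on all of $\C$, including at $t=0$ — this is the one place where the connectedness of $\{\ell=0\}\setminus\{0\}$ is genuinely needed, so that $\phi$ is single-valued (hence continuous) there.

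The second step exploits the contraction $g$. Since $\pi\circ g=\pi$ we have $\tilde f\circ g=\tilde f$, hence for every $p\in\C^2\setminus\{0\}$ and every $n\geq1$,
\[
\tilde f(p)=\tilde f(g^{n}(p))=\phi\bigl(\ell(g^{n}(p))\bigr).
\]
Letting $n\to\infty$ and using $g^{n}(p)\to0$ together with the continuity of $\ell$ and $\ell(0)=0$, we get $\ell(g^{n}(p))\to0$; by continuity of $\phi$ at $0$ this forces $\tilde f(p)=\phi(0)$. Thus $\tilde f$, and hence $f$, is constant, contradicting that $f$ is a first integral. Since $\alpha\in\C$ was arbitrary, the set in the statement is empty.

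The main (and essentially only) delicate point I expect is the regularity of $\phi$ at the origin, i.e.\ that deleting $0$ does not disconnect the central leaf $\{\ell=0\}$; once that is in place the conclusion is forced by the fact that the deck group of a Hopf surface contracts everything toward the puncture. Alternatively one can run the computation with the contraction in Kodaira normal form $g(x,y)=(ax+\lambda y^{m},by)$, $0<|a|\le|b|<1$: a short check shows $\ell\circ g=b\,\ell$ whenever $dy-\alpha dx=0$ is $G$-invariant, and then $\phi(b^{n}t)=\phi(t)\to\phi(0)$ immediately; but the coordinate-free version above avoids the normal-form bookkeeping.
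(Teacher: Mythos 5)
Your proof is correct, but it takes a genuinely different route from the paper's. The paper works in Kodaira's normal form for a primary Hopf surface $X=(\C^2\setminus\{0\})/\langle f\rangle$ with $f(x,y)=(ax+\lambda y^r,by)$: for each $\alpha$ it exhibits a leaf whose intersection with an explicit cross-section contains infinitely many pairwise non-equivalent points, concludes that $\F_{\alpha}$ has infinitely many non-compact leaves and hence no holomorphic first integral, and then reduces the secondary case to the primary one via a finite unramified cover. You instead lift a putative first integral to $\tilde f=f\circ\pi$ on $\C^2\setminus\{0\}$, factor it as $\phi\circ\ell$ with $\ell(x,y)=y-\alpha x$ (the key point being connectedness of the fibers of $\ell$ away from the origin, including the punctured central fiber, which makes $\phi$ single-valued and holomorphic at $0$), and then use a contraction $g$ in the deck group: $\tilde f(p)=\phi(\ell(g^{n}(p)))\to\phi(0)$ forces $\tilde f$, hence $f$, to be constant. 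Your argument is shorter and more uniform: it needs no normal-form bookkeeping, no separate treatment of $\alpha=0$ (or $\alpha=\infty$), and it handles primary and secondary Hopf surfaces simultaneously, at the price of invoking Kodaira's structure theorem that the fundamental group of any Hopf surface contains a contraction --- essentially the same input the paper uses when it passes to the primary cover. The paper's explicit computation buys more concrete dynamical information (an explicit infinite set of returns of a leaf to a transversal, in the spirit of the holonomy computations used later in the paper), whereas yours isolates the soft mechanism: any holomorphic map constant on the leaves must factor through $\ell$ and is then killed by the contraction. Both proofs, like the statement itself, implicitly assume that $dy-\alpha dx$ descends to a foliation on $X$; your argument uses this only through the fact that $\tilde f$ is constant on the fibers of $\ell$, so no additional care is required there.
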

\begin{proof}
First assume that $X$ is a primary Hopf surface, that is, it has the form
\[
X=\dfrac{\C^2\setminus\{(0,0)\}}{\Gamma}
\]
where $\Gamma$ is generated by $f(x,y)=(a x+\lambda y^r,b y)$, with $r\in\Z$, $r> 0$, $a,b\in\C$, $0<|a|\leq |b|<1$ and either $\lambda=0$ or $a=b^r$. Straightforward calculations show that the iterations of $f$ take the form
\[
f^{n}(x,y)=(a^n x+n\lambda a^{n-1}y^r,b^n y),\qquad n\in\Z.
\]
We claim that, for $n\in\Z$, $(x,1)=f^n(z,1)$ if and only if $n=0$ and $x=z$. The if part is immediate. For the only if part, is enough to observe that $(x,1)=f^n(z,1)=(a^n z+n\lambda a^{n-1},b^n)$ and $1=b^n$ implies $n=0$ and $x=z$, since $|b|<1$.

Given $\alpha\in\C\setminus\{0\}$, let us consider $L_{\alpha}=\{\pi(x,\alpha x+1)\::\:x\in\C\}$ and a cross section $\Sigma=\{\pi(x',1)\::\:x'\in\C\}$. 
Note that 
\[
\pi(x,\alpha x+1)=\pi(x',1)\in \Sigma\cap L_{\alpha} \quad\Longleftrightarrow\quad x=\dfrac{b^n-1}{\alpha},\, x'=\dfrac{b^n-1}{a^n\alpha}-\dfrac{n\lambda}{a},
\]
for some $n\in\Z$. 
By our previous claim, we conclude that there are infinite, non-equivalent, elements of the form $\pi(x',1)\in L_{\alpha}\cap\Sigma$. Therefore $L_{\alpha}\cap\Sigma$ is infinite.
Now for $\alpha=0$, we consider $L_0=\{\pi(x,1)\::\:x\in\C\}$ and let $\Sigma=\{\pi(1,y)\::\:y\in\C\}$. 
In this case,
\[
\pi(x,1)=\pi(1,y)\in \Sigma\cap L_0\quad\Longleftrightarrow\quad x=\dfrac{1}{a^n}-n\lambda \dfrac{1}{a},\, y=b^n,
\]
for some $n\in\Z$. Again, by our previous claim, there are infinite, non-equivalent, elements of the form $\pi(x,1)\in L_{0}\cap\Sigma$. Therefore $L_{0}\cap\Sigma$ is infinite. The case when $\alpha=\infty$ is analogous.
We conclude that $\F_{\alpha}$ must have infinite non-compact leaves, and so it does not have a first integral. 

When $X$ is a secondary Hopf surface, from Chapter 5, Section 18 in~\cite{BPV}, there exists a finite unramified cover that is a primary Hopf surface, that is, there exists an holomorphic $F:X_1\to X$, where $X_1$ is a primary Hopf surface. The proposition follows.
\end{proof}

\section{Pencils of foliations}\label{sec:pencils}
Most of the following definitions and properties were introduced by Lins Neto in~\cite{LN2}.
\subsection{First definitions}\label{sec:definitions}
Let $\F$ and $\G$ be two distinct foliatons on a compact complex surface $X$
with isolated singularities 
and whose normal fibers bundles are isomorphic, which will be denoted as
$N_{\F}=N_{\G}$. Then there exist an open covering $\U=\{U_i\}_{i\in I}$ of $X$ and collections $(\omega_i)_{i\in I}$, $(\eta_i)_{i\in I}$ and $(g_{ij})_{ij}$ such that
\begin{enumerate}
 \item $\omega_i$ and $\eta_i$ are holomorphic 1-forms on $U_i$, which define $\F$ and $\G$ on $U_i$, respectively,
 \item if $U_{ij}=U_i\cap U_j\neq\emptyset$ then $\omega_i=g_{ij}\omega_j$ and $\eta_i=g_{ij}\eta_j$, on $U_{ij}$.
\end{enumerate}
Using item~(2), given $\alpha\in\Ce$, the collections $\{U_{i},\omega_i+\alpha\eta_i,g_{ij}\}_{i,j\in I}$ define a foliation $\F_{\alpha}$. Thus, we obtain a linear family of foliations $\Pc(\F,\G)=\{\F_{\alpha}\}_{\alpha\in\Ce}$, which is called the \emph{pencil} generated by $\F$ and $\G$. 
Note that $\F_0=\F$ and $\F_{\infty}=\G$. 

The \emph{tangency set} between $\F$ and $\G$, denoted by $\tang(\F,\G)$, is defined as
\[
\tang(\F,\G)\cap U_i=\{\omega_i\wedge \eta_i=0\}.
\]

Given a pencil $\Pc=\Pc(\F,\G)$, it is straightforward to observe that, for every $\alpha,\beta\in\Ce$, $\alpha\neq\beta$,
\[
\tang(\F,\G)=\tang(\F_{\alpha},\F_{\beta}).
\]
This allows us to consider the \emph{tangency set} $\Delta(\Pc)$ of $\Pc$ as $\Delta(\Pc)=\tang(\F,\G)$. 
It is easy to prove that,
\begin{equation}\label{eq:deltapc}
\Delta(\Pc)=\bigcup_{\alpha\in\Ce}\sing(\F_{\alpha}).
\end{equation}
In particular, $\Delta(\Pc)=\emptyset$ if, and only if, $\sing(\F_{\alpha})=\emptyset$, for all $\alpha\in\Ce$.
Let $[\Delta(\Pc)]$ be the divisor defined by $\Delta(\Pc)$ then~\cite[Lemme~4]{BR3}
\begin{equation}\label{eq1.1}
\Og([\Delta(\Pc)])=T_{\F}^*\otimes N_{\G}. 
\end{equation}

Let $\Pc=\Pc(\F,\G)$ be a pencil. Consider the sets
\[
NI(\Pc)=\{\alpha\in\ovl{\C}\::\:\F_{\alpha}\text{ has non-isolated singularities}\}
\]
and $IS(\Pc)=\ovl{\C}\setminus NI(\Pc)$. From equation~\eqref{eq:deltapc}, we conclude that $NI(\Pc)$ is finite, since $\F=\F_0$ has isolated singularities and $\Delta(\Pc)$ is an analytic set. 
Moreover, for every $\alpha,\beta\in IS(\Pc)$, $\alpha\neq\beta$, we have $\Pc(\F,\G)=\Pc(\F_{\alpha},\F_{\beta})$ and $N_{\F_{\alpha}}=N_{\F_{\beta}}$.

The fact that $NI(\Pc)$ is finite implies that the foliations in $\Pc$ can have a singular set of codimension one, as shown by the following example.

\begin{example}\label{ex:1}
Let $X=\P^1\times\P^1$ and let $(x,y,\C^2)$ be a coordinate system on $X$. Let $\F$ and $\G$ two foliations on $X$ such that $\F|_{\C^2}$ is induced by the 1-form $dy=0$ and  $\G|_{\C^2}$ is induced by the 1-form $dx=0$.  It is not difficult to note that, if $\Pc=\Pc(\F,\G)=\{\F_{\alpha}\}_{\alpha\in\Ce}$ then $\F_{\alpha}$ is defined in $\C^2$ by 
$dy+\alpha dx=0$.
Thus, taking coordinates $(x,t,\C^2)$, $(u,y,\C^2)$ and $(u,t,\C^2)$, with $u=1/x$ and $t=1/y$, $\F_{\alpha}$ is respectively defined by
\[
dt-\alpha t^2dx=0,\quad -u^2dy+\alpha du=0,\quad u^2dt+\alpha t^2 du=0.
\]
Note that $N_{\F}\simeq N_{\G}$, since
\[
 dy+\alpha dx=\dfrac{1}{t^2}(dt-\alpha t^2dx)=\dfrac{1}{u^2}(-u^2dy+\alpha du)=2\dfrac{1}{t^2u^2}(u^2dt+\alpha t^2du).
\]

In addition,
\[
\sing(\F_{\alpha})=
\begin{cases}
\{\infty\} \times\P^1,&\text{if }\alpha=0,\\
\P^1\times \{\infty\},&\text{if }\alpha=\infty,\\
\{(\infty,\infty)\},&\text{if }\alpha\neq 0,\infty,
\end{cases}
\]
and 
\[
\Delta(\Pc)=\sing(\F_0)\cup\sing(\F_{\infty}).
\]
\end{example}

The following are examples of pencils with empty tangency set.

\begin{example}\label{ex:2}
Let $X=\C^2/\Gamma$ be a complex torus and let $\pi:\C^2\to X$ be the natural projection. Let $\Pc$ be the pencil on $X$ induced by the family of 1-forms $dx+\alpha dy$. Clearly $\sing(\F_{\alpha})=\emptyset$, for all $\alpha\in\Ce$, and therefore $\Delta(\Pc)=\emptyset$.
\end{example}
\begin{example}\label{ex:3}
Let $X$ be the Hopf surface defined by $\dfrac{\C^2\setminus\{(0,0)\}}{\langle f\rangle}$, with $f(x,y)=(x/2,y/2)$, and let $\pi:\C^2\setminus\{(0,0)\}\to X$ be the natural projection. In $\C^2\setminus\{(0,0)\}$ consider $\Pc$ defined by the 1-forms $dx+\alpha dy=0$ and let $\Pc_{*}=\pi_*(\Pc)$ be the induced pencil on $X$. Then $\Delta(\Pc_*)=\emptyset$.
\end{example}

\subsection{Curvature of a pencil}\label{sec:curvature}
Let $\Pc=\{\F_{\alpha}\}_{\alpha\in\Ce}$ be a pencil on a compact complex surface $X$, defined by $(\omega_i+\alpha\eta_i,U_i,g_{ij})_{i,j\in I}$. Given $i\in I$ we can find a unique 1-form $\theta_i$, meromorphic on $U_i$ and holomorphic on $U_i\setminus\Delta(\Pc)$, such that
\[
d\omega_i=\theta_i\wedge \omega_i,\qquad d\eta_i=\theta_i\wedge \eta_i.
\]
Hence we can define a 2-form $\Theta$ on $X$, holomorphic on $X\setminus\Delta(\Pc)$, such that
\[
\Theta|_{U_i\setminus\Delta(\Pc)}=d\theta_i.
\]
From now on, $\Theta=\Theta(\Pc)$ will be called the \emph{curvature} of $\Pc$. In addition, a pencil $\Pc$ is \emph{flat} whenever its curvature is zero, that is $\Theta(\Pc)\equiv 0$. 

\begin{remark}
The pencils $\Pc$ and $\Pc_*$ respectively defined in Examples~\ref{ex:2} and \ref{ex:3} are flat. This is a consequence of Proposition~\ref{pro:lnflat}, since both have an empty tangency set.
\end{remark}

The following lemma shows us that $X$ admits an \emph{holomorphic projective structure}~\cite{MR575449} outside $\Delta(\Pc)$.
\begin{lemma}[{\cite[Lemma 2.1.4]{LN2}}]\label{17}
Let $\Pc=\{\F_{\alpha}\}_{\alpha\in\Ce}$ be a flat pencil on $X$. Given $p\in X\setminus\Delta(\Pc)$, there exists a system of coordinates $(x,y,U)$,
where $U$ is a neighborhood of $p$ and $(x,y):U\to\C^2$, such that, for any $\alpha\in\Ce$, $\F_{\alpha}$ is defined on $U$ by the linear 1-form
\[
dy+\alpha dx.
\]
Furthermore, if $(u,v,V)$ is another system of coordinates, where $U\cap V$ is non-empty and convex and $\F_{\alpha}$ is defined by $dv+\alpha du$ then $du=\lambda dx$ and $dv=\lambda dy$, for some $\lambda\in\C$, $\lambda\neq 0$.
\end{lemma}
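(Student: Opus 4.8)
The plan is to work entirely inside a small, simply connected (say polydisc) neighborhood $U$ of $p$ with $U\subset U_i\setminus\Delta(\Pc)$ for some chart index $i$. On such a $U$ the defining forms $\omega:=\omega_i$ of $\F_0$ and $\eta:=\eta_i$ of $\F_\infty$ are holomorphic and satisfy $\omega\wedge\eta\neq 0$ at every point, since this is precisely the condition $p\notin\Delta(\Pc)=\{\omega_i\wedge\eta_i=0\}$; thus $(\omega,\eta)$ is a holomorphic coframe on $U$. Moreover the form $\theta:=\theta_i$ constructed in Section~\ref{sec:curvature} is holomorphic on $U$, because its only poles lie along $\Delta(\Pc)$, which we have excluded. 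The flatness hypothesis $\Theta\equiv 0$ then reads exactly $d\theta=0$ on $U$.

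The first step is to linearize the pencil by an integrating factor. Since $d\theta=0$ and $U$ is simply connected, the holomorphic Poincar\'e lemma yields a holomorphic $h$ on $U$ with $\theta=dh$. Setting $\tilde\omega:=e^{-h}\omega$ and $\tilde\eta:=e^{-h}\eta$, a one-line computation using $d\omega=\theta\wedge\omega$, $d\eta=\theta\wedge\eta$ and $dh=\theta$ gives $d\tilde\omega=d\tilde\eta=0$. These are closed holomorphic $1$-forms with $\tilde\omega\wedge\tilde\eta=e^{-2h}\,\omega\wedge\eta\neq 0$, so a second application of the Poincar\'e lemma produces holomorphic functions $x,y$ on $U$ with $\tilde\eta=dx$ and $\tilde\omega=dy$. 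As $dx\wedge dy\neq 0$, the pair $(x,y)$ is a coordinate system on $U$, and from $\omega+\alpha\eta=e^{h}(dy+\alpha dx)$ with $e^{h}$ nonvanishing we conclude that $\F_\alpha$ is defined on $U$ by $dy+\alpha dx$ for every $\alpha\in\Ce$.

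For the rigidity statement I would compare two such normal forms on $U\cap V$, taken nonempty, connected and convex. Proportionality of defining forms for a fixed foliation gives, from the value $\alpha=0$, that $dv=a\,dy$, and from $\alpha=\infty$, that $du=b\,dx$, with $a,b$ nonvanishing and holomorphic. Imposing that $dv+\alpha du=a\,dy+\alpha b\,dx$ be proportional to $dy+\alpha dx$ for all $\alpha$ forces, upon matching the $dy$- and $dx$-components in the independent coframe $(dx,dy)$, the equalities $a=b=:\lambda$. Finally, differentiating $du=\lambda\,dx$ and $dv=\lambda\,dy$ yields $d\lambda\wedge dx=0$ and $d\lambda\wedge dy=0$; since $dx$ and $dy$ are linearly independent this forces $d\lambda=0$, so $\lambda$ is a nonzero constant on the connected set $U\cap V$, giving $du=\lambda\,dx$ and $dv=\lambda\,dy$ as claimed.

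The point requiring the most care is the transition from the curvature data to the integrating factor: one must genuinely verify that $\theta$ is holomorphic (not merely meromorphic) on the chosen $U$, which is exactly why shrinking $U$ away from $\Delta(\Pc)$ is essential, and that flatness translates precisely into the closedness $d\theta=0$ needed for the Poincar\'e lemma. Once these two facts are secured, the remainder of the argument reduces to the two applications of the holomorphic Poincar\'e lemma and to elementary linear algebra in the coframe $(\omega,\eta)$.
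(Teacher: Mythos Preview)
The paper does not give its own proof of this lemma; it is stated with a citation to \cite[Lemma~2.1.4]{LN2} and used as a black box. So there is no in-paper argument to compare against.

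That said, your argument is correct and is essentially the canonical one. The key observations are exactly the two you isolate: (i) away from $\Delta(\Pc)$ the connection form $\theta$ is holomorphic and, by flatness, closed, so on a simply connected $U$ one gets an integrating factor $e^{-h}$ making both $\omega$ and $\eta$ simultaneously closed; (ii) a second application of the holomorphic Poincar\'e lemma then produces the linearizing coordinates $(x,y)$. Your treatment of the transition statement is also fine: extracting $dv=a\,dy$ and $du=b\,dx$ from the special values $\alpha=0,\infty$, forcing $a=b$ from a generic $\alpha$, and then using $d^2=0$ together with the independence of $dx,dy$ to conclude $d\lambda=0$ is the clean way to do it. The only cosmetic remark is that you might state explicitly that $a=\partial v/\partial y$ and $b=\partial u/\partial x$ in the $(x,y)$-chart, which makes their holomorphicity and nonvanishing immediate.
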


\subsection{On the components of the pencil's tangency set}

Let $\Pc=\{\F_{\alpha}\}_{\alpha\in\Ce}$ be a pencil on compact complex surface $X$ and let $C$ be an irreducible component of $\Delta(\Pc)$. Then $C$ is called \emph{invariant for the pencil} if $C$ is invariant for $\F_{\alpha}$, for all $\alpha\in\Ce$. Moreover, $\Delta(\Pc)$ is called \emph{invariant}, if every irreducible component of $\Delta(\Pc)$ is invariant for the pencil.
The set of parameters $\alpha\in\Ce$ for which $\F_{\alpha}$ admit an holomorphic first integral will be denoted by $I_p(\Pc)$.

\begin{example}\label{ejmmodelo}
Each one of the following examples defines a flat pencil whose tangency set is invariant. 
These examples can be found in~\cite{LN1} and \cite[Example 3]{LN2}. See also~\cite[Example 1.6]{LN3}.

\begin{enumerate}
\item Degree two pencil, defined as
$$
\Pc_2\:\begin{cases}
w_1=(4x-9x^2+y^2)dy-(6y-12xy)dx,\\
\eta_1=(2y-4xy)dy-3(x^2-y^2)dx,
\end{cases}
$$
with $\Delta(\Pc_2)=-4y^2+4x^3+12x y^2-9 x^4-6x^2y^3-y^4$.
\item Degree three pencil, defined as
$$
\Pc_3\:\begin{cases}
w_4=(-4x+x^3+3 x y^2)dy-2 y(y^2-1)dx,\\
\eta_4=(x^2y-y^3)dy-2 x(y^2-1)dx,
\end{cases}
$$
with $\Delta(\Pc_3)=(y^2-x)\Big(y-\frac{x^2}{4}\Big)(y^3-x^3+3xy+1)$.
\item Degree three pencil, defined as
$$
\Pc_3'\:\begin{cases}
w_2=(-x+2 y^2-4x^2 y+x^4)dy-y(-2-3xy+x^3)dx,\\
\eta_2=(2y-x^2+xy^2)dy-(3xy-x^3+2y^3)dx,
\end{cases}
$$
with $\Delta(\Pc_3')=(x^3-1)(y^3-1)(x^3-y^3)$.
\item Degree four pencil, defined as
$$
\Pc_4\:\begin{cases}
w_3=(x^3-1)xdy-(y^3-1)ydx,\\
\eta_3=(x^3-1)y^2dy-(y^3-1)x^2dx,
\end{cases}
$$
with $\Delta(\Pc_4)=(y^2-1)(x+2+y^2-2x)(x^2+y^2+2x)$.
\end{enumerate}
\end{example}

\begin{definition}
Let $\Pc=\{\F_{\alpha}\}_{\alpha\in\Ce}$ be a pencil on compact complex surface $X$ such that $\F_{\infty}$ has an holomorphic first integral $f:X\to S$. The pencil $\Pc$ is called \emph{transversal} to $f$ if there exists $\beta \in IS(\Pc)$ such that the generic fibers of $f$ are $\F_{\beta}$-transversal.
\end{definition}
In particular, note that if $\Pc$ is transversal to $f$ then, for all $\alpha\in IS(\Pc)$, the generic fibers of $f$ are also $\F_{\alpha}$-transversal.

\begin{lemma}\label{lem:finally}
Let $\Pc$ be a pencil on a compact complex surface $X$ such that $\F_{\infty}$ has an holomorphic first integral $f:X\to S$. If $\Delta(\Pc)$ is invariant then $\Pc$ is transversal to $f$.
\end{lemma}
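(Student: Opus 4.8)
The plan is to show that if some $\F_\beta$ were everywhere tangent to the fibration $f$, then $\F_\beta$ would coincide with $\F_\infty$ on a Zariski-dense set and hence equal $\F_\infty$, contradicting $\beta\neq\infty$; so the work is to extract a $\beta\in IS(\Pc)$ whose generic leaves are genuinely transversal to the generic fiber of $f$, using the hypothesis that $\Delta(\Pc)$ is invariant. First I would fix notation: let $F=\F_\infty$, which is the foliation tangent to the fibration $f$, and let $C$ be a generic fiber of $f$, so $C$ is smooth, irreducible, disjoint from $\Delta(\Pc)$ (since $\Delta(\Pc)=\bigcup_\alpha\sing(\F_\alpha)$ is a fixed analytic set and generic fibers avoid it), and $C$ is $\F_\infty$-invariant. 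The dichotomy for any fixed $\alpha\in IS(\Pc)$ is that either $C$ is $\F_\alpha$-invariant, or $C$ is generically transversal to $\F_\alpha$; the lemma is proved once I produce a single $\alpha\in IS(\Pc)$, $\alpha\neq\infty$, realizing the second alternative.

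The key step is to rule out the possibility that $C$ is $\F_\alpha$-invariant for every $\alpha\in IS(\Pc)$. Suppose, for contradiction, that this holds. Take two parameters $0,\beta\in IS(\Pc)$ with $\beta\neq 0,\infty$, so $\Pc=\Pc(\F_0,\F_\beta)$ with the local presentation $\omega_i+\alpha\eta_i$; the assumption gives that the generic fiber $C$ is invariant for $\F_0$ and for $\F_\beta$, hence for $\F_\infty$ (indeed for every $\F_\alpha$). But the tangency locus between $\F_0$ and $\F_\infty$ along $C$ — where $\omega_i\wedge\eta_i$ vanishes — is exactly $\Delta(\Pc)\cap C$ as a set, which is empty for generic $C$; so on a generic fiber $\F_0$ and $\F_\infty$ have the same leaf, namely $C$ itself. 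Varying $C$ over the Zariski-dense family of generic fibers, $\F_0$ and $\F_\infty$ agree on a Zariski-dense open set, whence $\F_0=\F_\infty$, i.e. $0=\infty$ in $\Ce$, a contradiction. Therefore there exists $\beta\in IS(\Pc)$ with $C$ not $\F_\beta$-invariant, i.e. generically $\F_\beta$-transversal, and by definition $\Pc$ is transversal to $f$.

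I would then double-check the two places where invariance of $\Delta(\Pc)$ is actually used, since the statement singles it out. It enters in guaranteeing that the relevant "bad" set stays inside $\Delta(\Pc)$: a priori, for $\alpha\in IS(\Pc)$, the locus where a generic fiber $C$ fails to be transversal to $\F_\alpha$ could be a curve not contained in $\Delta(\Pc)$, and one must know that the family of generic fibers avoids all obstructions at once. Concretely, the cleanest route is: because $\Delta(\Pc)$ is invariant, no irreducible component of $\Delta(\Pc)$ is a fiber of $f$ unless that component is itself a leaf of $\F_\infty$ — which is automatic — and more importantly every component of $\Delta(\Pc)$ is $\F_\alpha$-invariant for all $\alpha$, so the singular schemes $\sing(\F_\alpha)$ never "spread" along a generic fiber. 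I expect the main obstacle to be precisely this bookkeeping: ensuring that "generic fiber $C$" can be chosen simultaneously (i) irreducible and smooth, (ii) disjoint from $\Delta(\Pc)$, (iii) not contained in $\sing(\F_\alpha)$ for the finitely many $\alpha\in NI(\Pc)$, and then translating "not $\F_\beta$-invariant for some $\beta$" into the definition of transversality; the foliation-theoretic core (two foliations tangent along a dense family of curves must coincide) is routine once the genericity is set up. A subtlety worth flagging is the degenerate case where $\Delta(\Pc)$ might contain a fiber of $f$ as a component — invariance of $\Delta(\Pc)$ forces such a component to be $\F_\alpha$-invariant for all $\alpha$, which is consistent and causes no trouble, but one should note it so the "generic $C$ avoids $\Delta(\Pc)$" claim is correctly about the \emph{generic}, not \emph{every}, fiber.
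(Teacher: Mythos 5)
Your proposal has two genuine gaps, and between them it ends up proving a weaker statement than the lemma. First, the genericity claim that a generic fiber $C$ is disjoint from $\Delta(\Pc)$ ``since $\Delta(\Pc)$ is a fixed analytic set and generic fibers avoid it'' is not correct: a generic fiber avoids only those components of $\Delta(\Pc)$ that are contained in fibers; any component dominating $S$ (a horizontal component) meets \emph{every} fiber. Such components are allowed here and actually occur in the paper's main setting: invariance of $\Delta(\Pc)$ forces each component to be $\F_{\infty}$-invariant, hence either contained in a fiber of $f$ or contained in $\sing(\F_{\infty})$, and the latter kind can be horizontal --- e.g.\ in the situation of Theorem~\ref{teo:new:51} the curve $\{y=\infty\}\subset\sing(\F_{\infty})$ is a section of $f$ and lies in $\Delta(\Pc)$. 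So what genericity really gives is that $C$ misses $\sing(\F_{\alpha})$ and the finitely many vertical components of $\Delta(\Pc)$, and crosses the remaining invariant components transversally; it does not give $C\cap\Delta(\Pc)=\emptyset$.

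Second, and more importantly, the conclusion you reach is only that some $\F_{\beta}$ does not leave the generic fiber invariant, i.e.\ that the fiber has at most finitely many tangency points with $\F_{\beta}$. (Your contradiction is fine, and in fact shorter than you make it: a fiber invariant by two distinct members of the pencil is contained in $\Delta(\Pc)$, so there is no need for the detour through $\F_0=\F_{\infty}$.) But ``transversal to $f$'' in the sense used in Section~\ref{sec:holonomy} means the generic fiber is transversal to $\F_{\beta}$ at \emph{every} point, i.e.\ $\tang(\F_{\beta},F)=0$ and $F\cap\sing(\F_{\beta})=\emptyset$; this is exactly what Ehresmann's construction of the holonomy requires, and non-invariance alone does not give it. The step you flag as ``translating non-invariance into the definition of transversality'' is where the invariance hypothesis actually does its work, and it is the paper's one-line argument: away from $\sing(\F_{\infty})$ a fiber $F$ is a leaf of $\F_{\infty}$, so any tangency of $\F_{\alpha}$ with $F$ lies on $\Delta(\Pc)$; a generic $F$ meets $\Delta(\Pc)$ only transversally and only along components that are $\F_{\alpha}$-invariant, so at those crossing points $\F_{\alpha}$ follows the component of $\Delta(\Pc)$ and is therefore transverse to $F$; combined with $F\cap\sing(\F_{\alpha})=\emptyset$ this yields $\tang(\F_{\alpha},F)=0$ for every $\alpha\in IS(\Pc)$ for which $F$ is not invariant. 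This use of the invariance of $\Delta(\Pc)$ to kill the residual tangencies is the missing idea in your proposal.
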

\begin{proof}
Let $\alpha\in IS(\Pc)$.
For any generic regular fiber $F$ of $f$ we have $F\cap 
\sing(\F_{\alpha})=\emptyset$ and $F$ is transversal to $\Delta(\Pc)$. Therefore, $\tang(\F_{\alpha},F)=0$ for any generic regular fiber.
\end{proof}

\begin{remark}
Under the conditions of Lemma~\ref{lem:finally}: if $\Delta(\Pc)$ is invariant then
 \[
\Delta(\Pc)= \bigcup_{j=1}^{k} f^{-1}(c_j)\cup  \bigcup_{l=1}^{r} C_l,
\]
where $c_1,\ldots,c_k\in S$ and $C_l\subset\sing(\F_{\infty})$, for $l=1,\ldots,r$.
\end{remark}

\subsection{Holonomy of a pencil}\label{sec:holonomy}

Let $\Pc=\{\F_{\alpha}\}_{\alpha\in\Ce}$ be a pencil on a compact complex surface $X$ such that $\F_{\infty}$ has an holomorphic first integral $f:X\to S$ and $\Delta(\Pc)$ is invariant.

We will follow the steps of~\cite[\S~2]{LN2}. Throughout this section, we will denote by $T_{c}$ the level curve $f^{-1}(c)\subset X$.
Since $\Delta(\Pc)$ is invariant, by Lemma~\ref{lem:finally}, $\Pc$ is transversal to $f$. 
Then, there exist open sets $W=S\setminus\{c_1,\ldots,c_k\}$ and $U=X\setminus f^{-1}(\{c_1,\ldots,c_k\})$ such that, for any $\alpha\in IS(\Pc)\setminus\{\infty\}$, the foliation $\F_{\alpha}$ is transversal to the fibers of $f$ in all points of the set $U$.
By Ehresmann's theory of foliations transversal to the fibers of a fibration~\cite{EhR}, for any $\alpha\in IS(\Pc)\setminus\{\infty\}$, we have the associated \emph{holonomy representation} $\Hol_{\alpha}$ of $\F_{\alpha}$,
\[
\Hol_{\alpha}:\Pi_1(W,c)\to \diff(T_c),
\]
 where $c\in W$, with the following properties:
\begin{enumerate}
\item For any leaf $L$ of $\F_{\alpha}|_U$, $f|_L:L\to W$ is a covering map.
\item Given a (regular) fiber $T_c$ and a closed curve $\gamma:[0,1]\to W$ with $\gamma(0)=\gamma(1)=c$, that is, $[\gamma]\in \Pi_1(W,c)$, the diffeomorphism $\Hol_{\alpha}([\gamma]):T_c\to T_c$ is defined as follows: take $p\in T_c$ and let $L_{\alpha}(p)$ be the leaf of $\F_{\alpha}$ passing through $p$.
Since $f|_{L_{\alpha}(p)}:L_{\alpha}(p) \to W$ is a covering map, there exists a unique curve $\tilde{\gamma}$ on $L_{\alpha}(p)$ such that $f \circ \tilde{\gamma}=\gamma$ and
$\tilde{\gamma}(0)=p$. Now consider $\Hol_{\alpha}([\gamma])(p)=\tilde{\gamma}(1)$.
\item Denote by $G_{\alpha}$ the subgroup $\Hol_{\alpha}(\Pi_1(W,c))$ of $\diff(T_c)$, then
\[
 L\cap T_c=\{h(q): h \in G_{\alpha}\}.
\]
The group $G_{\alpha}$ is called the \emph{global holonomy group} of $\F_{\alpha}$.
\end{enumerate}
\begin{remark}
Since the foliation $\F_{\alpha}$ and the fibration $f|_U$ are holomorphic, it follows that all elements $h\in G_{\alpha}$ are automorphisms of the curve $T_c$. 
\end{remark}

Fixed $\beta \in IS(\Pc)$ and $p \in T_c$, by item~(2) above and Lemma~\ref{17}, there exists an open covering $\{U_n\}_{n=1}^m$ of $\widetilde{\gamma}_{\beta}[0,1]$ and a system of coordinates $(X_n,Y_n)$ on $U_n$ such that $U_n\cap U_{n+1}$ is connected and $\F_{\beta}$ is represented in $U_n$ by the 1-form
\begin{equation}\label{eq:10}
\omega_{n,\beta}=dY_n+\beta dX_n.
\end{equation}
Note that $\F_0|_{U_n}\::\:dY_n=0$, and $\F_{\infty}|_{U_n}\::\:dX_n=0$.

From~\cite{LN2}, there exists $\eps>0$ and a cross section $\Sigma_0$ to $\F_{\beta}$ at $q_0\in T_c$
such that $h_{\alpha}\in G_{\alpha}$ takes the form
\begin{equation}\label{eq:new:12}
h_{\alpha}|_{\Sigma_0}(q_0)=Y_m^{-1}(\lambda Y_1(q_0)+a\alpha +b),
\end{equation}
for all $\alpha$ in a certain open disk $D_{\eps}(\beta)$.

\begin{remark}
Also in~\cite{LN2}, Lins Neto proved that when $\gen(f)=1$, the generators of the global holonomy group $G_{\alpha}$, $f_{j,\alpha}$, take the form $f_{j,\alpha}(z)=\lambda_jz+a_j\alpha+b_j$, $j=1,\ldots,k$, where $z$ is the uniformizing coordinate of the torus.  We will deal with the case $\gen(f)=0$ in Section~\ref{sec:invariant}.
\end{remark}

\section{Classification of pencils with empty tangency set}\label{sec:caseempty}
In this section, we will deal with pencils $\Pc$ over a compact complex surface $X$ such that $\Delta(\Pc)=\emptyset$.  We begin with the following result extracted from~\cite[Theorem~2]{LN1}.

\begin{proposition}\label{pro:lnflat}
Let $\Pc$ be a pencil with empty tangency set, defined on a compact complex surface. Then $\Pc$ is flat.
\end{proposition}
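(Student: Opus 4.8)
The plan is to exploit that, when $\Delta(\Pc)=\emptyset$, the curvature $\Theta=\Theta(\Pc)$ of Section~\ref{sec:curvature} is not merely meromorphic but a \emph{global holomorphic $2$-form} on the compact surface $X$, and moreover the curvature of a holomorphic connection on the line bundle $N_\F=N_\G$. Once this is set up, $\Theta$ is killed by a Chern--Weil / Stokes argument: the de Rham class of the curvature of any connection on a line bundle is a purely imaginary scalar times the real class $c_1(N_\F)$, and this forces $\int_X\Theta\wedge\overline\Theta=0$, whence $\Theta\equiv 0$ by positivity.

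First I would fix data $(\omega_i+\alpha\eta_i,U_i,g_{ij})$ defining $\Pc$. Because $\Delta(\Pc)\cap U_i=\{\omega_i\wedge\eta_i=0\}=\emptyset$, the forms $\omega_i$ and $\eta_i$ are pointwise linearly independent on $U_i$; solving for $\theta_i=a_i\omega_i+b_i\eta_i$ in $d\omega_i=\theta_i\wedge\omega_i$, $d\eta_i=\theta_i\wedge\eta_i$ then produces holomorphic $a_i,b_i$, so $\theta_i$ is holomorphic on all of $U_i$ and $\Theta|_{U_i}=d\theta_i$ is a holomorphic $2$-form; these glue, so $\Theta\in H^0(X,\Omega^2_X)$. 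Next, on $U_{ij}$ the relation $\omega_i=g_{ij}\omega_j$ yields $d\omega_i=(d\log g_{ij}+\theta_j)\wedge\omega_i$, hence $(\theta_i-\theta_j-d\log g_{ij})\wedge\omega_i=0$; the identical computation with $\eta_i$, together with the pointwise independence, gives $\theta_i-\theta_j=d\log g_{ij}$. Since $\{g_{ij}\}$ is precisely the holomorphic, nowhere-vanishing cocycle of $N_\F=N_\G$, the family $\{\theta_i\}$ is a holomorphic connection on $N_\F$ whose curvature form is $\Theta$.

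Then I would conclude as follows. By Chern--Weil, $[\Theta]=-2\pi\sqrt{-1}\,c_1(N_\F)$ in $H^2_{dR}(X;\C)$; as $c_1(N_\F)$ is a real (integral) class and the scalar is purely imaginary, $\overline{[\Theta]}=-[\Theta]$, so $\Theta+\overline\Theta=d\beta$ for some smooth $1$-form $\beta$ on $X$. Then, using $d\Theta=0$, the fact that $\Theta\wedge\Theta=0$ (it has type $(4,0)$ on a surface), the identity $\overline\Theta=-\Theta+d\beta$, and Stokes' theorem,
\[
\int_X\Theta\wedge\overline\Theta=\int_X\Theta\wedge d\beta=\int_X d(\Theta\wedge\beta)=0 .
\]
On the other hand $\Theta\wedge\overline\Theta$ is pointwise a non-negative multiple of the volume form, so $\Theta\equiv 0$, i.e. $\Pc$ is flat.

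The one delicate point is the Chern--Weil input in the last step: I must invoke the form of it valid for an arbitrary, a priori non-unitary, holomorphic connection on a line bundle, namely that the de Rham class of its curvature is an \emph{imaginary} scalar times the real class $c_1(N_\F)$. On a Kähler surface one could instead argue purely by Hodge theory---the class of the holomorphic $2$-form $\Theta$ lies in $H^{2,0}(X)$ while being proportional to $c_1(N_\F)\in H^{1,1}(X)$, so it vanishes, and the harmonic exact form $\Theta$ is then $0$---but the Chern--Weil route has the advantage of treating all compact complex surfaces uniformly, with no case distinction.
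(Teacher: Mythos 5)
Your argument is correct, and it is worth noting that the paper itself does not prove this statement: Proposition~\ref{pro:lnflat} is simply quoted from Lins Neto's work ({\cite[Theorem~2]{LN1}}), so what you have produced is a self-contained proof rather than a variant of one given in the text. Your two key observations are sound: (i) when $\Delta(\Pc)=\emptyset$ the pointwise independence of $\omega_i$ and $\eta_i$ makes each $\theta_i$ holomorphic (writing $\theta_i=a_i\omega_i+b_i\eta_i$ and dividing $d\omega_i$, $d\eta_i$ by the nowhere-vanishing $\omega_i\wedge\eta_i$), and the relations $(\theta_i-\theta_j-d\log g_{ij})\wedge\omega_i=0=(\theta_i-\theta_j-d\log g_{ij})\wedge\eta_i$ force $\theta_i-\theta_j=d\log g_{ij}$, so $\{\theta_i\}$ is (up to the usual sign convention, i.e.\ possibly a connection on $N_{\F}^*$ rather than $N_{\F}$, which is immaterial) a holomorphic connection with curvature $\Theta$, a global holomorphic $2$-form; (ii) the Chern--Weil fact that the curvature of an arbitrary smooth connection on a line bundle represents $-2\pi i\,c_1$ in complex de Rham cohomology is standard and does apply to non-unitary holomorphic connections, so $\overline{[\Theta]}=-[\Theta]$, and then $\int_X\Theta\wedge\overline{\Theta}=0$ by Stokes (using $d\Theta=0$ and $\Theta\wedge\Theta=0$ in complex dimension two), while pointwise $\Theta\wedge\overline{\Theta}$ is a non-negative multiple of the volume form, forcing $\Theta\equiv 0$. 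You were also right to avoid the Hodge-theoretic shortcut: the surfaces to which the paper applies this proposition include Hopf surfaces, which are non-K\"ahler, so the uniform Chern--Weil/Stokes route is exactly what is needed. The one cosmetic improvement would be to note that the existence of a holomorphic connection already gives the vanishing of the real Chern class (Atiyah), so $\Theta$ is exact and the same Stokes computation applies directly; but your phrasing via $\Theta+\overline{\Theta}=d\beta$ is equally valid.
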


\begin{lemma}\label{lem3}
If there exists a pencil on $X$ with empty tangency set then $X$ is a minimal surface, that is, $X$ does not contain smooth rational curves with self-intersection $-1$.
\end{lemma}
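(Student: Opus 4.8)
The plan is to argue by contradiction: suppose $X$ admits a pencil $\Pc=\Pc(\F,\G)$ with $\Delta(\Pc)=\emptyset$, and suppose $X$ contains a smooth rational curve $E$ with $E\cdot E=-1$. The guiding principle is that $\Delta(\Pc)=\emptyset$ forces every member $\F_\alpha$ of the pencil to be \emph{regular} (by equation~\eqref{eq:deltapc}, $\sing(\F_\alpha)=\emptyset$ for all $\alpha\in\Ce$), and a regular foliation on a surface imposes strong constraints on the curves it may contain or be tangent to. In particular, since $N_{\F_\alpha}=N_{\G}$ is a fixed line bundle $N$ for all $\alpha$, and since for a fixed $E$ at most one value of $\alpha$ can make $E$ invariant (two distinct foliations in the pencil agreeing on $E$ would share the tangent direction along $E$, hence agree everywhere on $E$, which is impossible away from $\Delta(\Pc)=\emptyset$), we may pick $\alpha_0\in\Ce$ with $E$ \emph{not} invariant by $\F_{\alpha_0}$.

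The key computation then uses the tangency formula from the preliminaries: for $C=E$ non-invariant by $\F:=\F_{\alpha_0}$,
\[
N_{\F}\cdot E=\mathcal{X}(E)+\tang(\F,E),\qquad T_{\F}\cdot E=E\cdot E-\tang(\F,E).
\]
Since $E\cong\P^1$ we have $\mathcal{X}(E)=\chi(E)=2$, and $E\cdot E=-1$, so $T_{\F}\cdot E=-1-\tang(\F,E)\le -1$, i.e. $T_{\F}\cdot E<0$, while $N_{\F}\cdot E=2+\tang(\F,E)\ge 2>0$. I would next contract $E$ by the classical Castelnuovo criterion, obtaining $\sigma:X\to X'$, and push the foliation forward. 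A regular foliation with $T_{\F}\cdot E<0$ cannot have $E$ as a leaf (it is not invariant), and the positivity $N_{\F}\cdot E>0$ together with $\tang(\F,E)\ge 2$ means $\F$ is tangent to $E$ with total order at least $2$; one checks that after blowing down, the induced foliation on $X'$ acquires a singularity at $\sigma(E)$ whose nature contradicts... — more directly, I expect the cleanest route is: a regular foliation $\F$ on $X$ with an exceptional curve $E$ of the first kind must have $E$ invariant (otherwise $T_{\F}\cdot E=-1-\tang(\F,E)$ contradicts the fact, provable from adjunction, that $T_\F$ restricted to a non-invariant $E$ carries a section vanishing to order exactly $\tang(\F,E)$, forcing $T_\F\cdot E\ge 0$ on a $(-1)$-curve only if $\tang=−1$, impossible). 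This gives the contradiction for the non-invariant case; so $E$ must be invariant by \emph{every} $\F_\alpha$, but then along $E$ all the $\F_\alpha$ share the leaf $E$, and picking two distinct members we find points of $E$ where $\omega_i\wedge\eta_i=0$, i.e. $E\subset\Delta(\Pc)$, contradicting $\Delta(\Pc)=\emptyset$.

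The main obstacle I anticipate is making the non-invariant case watertight: one has to rule out that $\F_{\alpha_0}$ is simply everywhere transverse to a $(-1)$-curve in a way compatible with regularity. The resolution uses that a smooth rational curve with negative self-intersection cannot be everywhere transverse to a regular foliation (if it were, with $\tang(\F,E)=0$, then $T_{\F}\cdot E=E\cdot E=-1<0=\deg T_\F|_E$ would be impossible since $T_\F|_E\hookrightarrow TE$ would be a nonzero map of line bundles on $\P^1$, forcing $T_\F\cdot E\le \deg TE=2$ — one needs the sharper statement that transversality gives $T_\F|_E\cong TE$, hence $T_\F\cdot E=2\ne -1$). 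Thus $\tang(\F,E)\ge 1$, and more careful bookkeeping of the tangency index at each point (each contributing a local obstruction to regularity of the contracted foliation) yields the contradiction. An alternative, possibly shorter, route is to invoke Brunella's index theorems directly: apply the formula $N_\F\cdot N_\F=BB(\F)$ together with $m(\F)=c_2(X)+BB(\F)-N_\F\cdot K_X=0$ (regularity), and observe that blowing down $E$ changes $c_2$, $K_X$ and $N_\F$ in a way incompatible with the simultaneous vanishing $m(\F_\alpha)=0$ for a whole pencil's worth of $\alpha$. I would present the self-contained tangency-index argument as the primary proof and mention the index-theoretic one as a remark.
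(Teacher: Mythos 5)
There is a genuine gap, and it sits exactly where you anticipated trouble: the non-invariant case. Your reduction to the case where the $(-1)$-curve $E$ is not invariant by some member $\F_{\alpha_0}$ is fine (two distinct members leaving $E$ invariant would force $E\subset\Delta(\Pc)$), but the two facts you then lean on are false. First, for a non-invariant curve the natural map is $T_{\F}|_E\to N_E$ (the normal bundle of $E$ in $X$), vanishing on the tangency divisor; this gives precisely $T_{\F}\cdot E=E\cdot E-\tang(\F,E)$ and imposes no inequality $T_{\F}\cdot E\geq 0$. Second, everywhere-transversality identifies $T_{\F}|_E$ with $N_E$, not with $TE$, so it gives $T_{\F}\cdot E=E\cdot E=-1$, not $2$. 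A concrete counterexample to your key claim (``a regular foliation must leave a $(-1)$-curve invariant'') is the $\P^1$-fibration on the first Hirzebruch surface: it is a regular foliation, everywhere transverse to the $(-1)$-section, with $T_{\F}\cdot E=-1$. This also shows that no argument using only the regularity of a single member $\F_{\alpha_0}$ (including your sketched blow-down/index bookkeeping, which is not carried out) can prove the lemma: regular foliations do exist on non-minimal surfaces.

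What is missing is the pencil-theoretic input that the paper uses: since $\Delta(\Pc)=\emptyset$, equation~\eqref{eq1.1} gives $\Og([\Delta(\Pc)])=T_{\F}^*\otimes N_{\G}=\Og_X$, hence $T_{\F}=N_{\F}=N_{\G}=T_{\G}$. With this bundle identity the intersection formulas on $C=E$ become contradictory in every case: if $C$ is invariant by $\F$ (or $\G$), then $T_{\F}\cdot C=\chi(C)=2$ while $N_{\F}\cdot C=C\cdot C=-1$; if $C$ is invariant by neither, then $T_{\F}=T_{\G}$ forces $\tang(\F,C)=\tang(\G,C)$ and comparing $T_{\F}\cdot C=C\cdot C-\tang(\F,C)$ with $N_{\G}\cdot C=\chi(C)+\tang(\G,C)$ again yields $-1=2$ (up to the tangency terms cancelling). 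Your proposal never invokes $T_{\F}=N_{\G}$, and without it the statement you are trying to prove in the non-invariant case is simply not true.
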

\begin{proof}
Let $\Pc=\Pc(\F,\G)$ be a pencil on $X$ such that $\Delta(\Pc)=\emptyset$ and assume that $X$ is not minimal. Then $X$ contains a rational and smooth curve $C$ such that $C\cdot C=-1$.
From equation~\eqref{eq1.1}, $\Og_X{[\Delta(\Pc)]}=T_{\F}^*\otimes N_{\G}=\Og_X$, hence $T_{\F}=N_{\G}$. Therefore
\begin{equation}\label{eqq4}
 T_{\F}=N_{\F}=N_{\G}.
\end{equation}
Now, if $C$ were invariant by $\F$ then, by equation~\eqref{eqq4} and the intersection formulas~\cite[p. 25]{BR1} 
we would have
\[
T_{\F}\cdot C=\mathcal{X}(C)-Z(\F,C)=2 = N_{\F}\cdot C=C\cdot C+Z(\F,C)=-1,
\]
which is a contradiction. Analogously, if $C$ were invariant by $\G$ then $2=N_{\G}\cdot C=T_{\G}\cdot C=-1$, again a contradiction. Thus $C$ is neither invariant by $\F$ nor $\G$, therefore
\begin{equation}\label{eqq44}
T_{\F}\cdot C=C\cdot C-\tang(\F,C)=N_{\G}\cdot C=
\mathcal{X}(C)-\tang(\G,C).
\end{equation}
As $T_{\F}\cdot C=C\cdot C-\tang(\F,C)=T_{\G}\cdot C=C\cdot C-\tang(\F,C)$, we have $\tang(\F,C)=\tang(\G,C)$. Then in equation~\eqref{eqq44} we obtain $C\cdot C=-1=\mathcal{X}(C)=2$, again a contradiction. Thus, $X$ is minimal.
\end{proof}

\begin{lemma}\label{lem:new44}
Let $\Pc$ be a pencil with empty tangency set, on a compact complex surface $X$.
 Let $\F\in\Pc$ and let $\mathcal{H}$ be a foliation tangent to a fibration whose generic fibers are $\F$-transversal. Then $\mathcal{H}\in\Pc$. 
\end{lemma}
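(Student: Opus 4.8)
The plan is to exploit the rigid structure coming from equation~\eqref{eqq4}, namely $T_{\F}=N_{\F}=N_{\G}$, which holds for \emph{every} member of the pencil since $\Pc(\F,\G)=\Pc(\F_{\alpha},\F_{\beta})$ and normal bundles are preserved. Let $\varphi\colon X\to S$ be the fibration to which $\mathcal{H}$ is tangent, so that $N_{\mathcal H}=\varphi^*T_S$ (up to the relevant line bundle associated to the multiple fibers) and $T_{\mathcal H}=\varphi^*K_S\otim\text{(multiple fiber correction)}$; the key point is that $\mathcal H$ is genuinely a foliation on $X$, hence has a well-defined normal bundle $N_{\mathcal H}$. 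First I would show $N_{\mathcal H}$ is isomorphic to $N_{\F}$. To do this, pick a generic fiber $F$ of $\varphi$; since $F$ is $\F$-transversal and $F$ is $\mathcal H$-invariant (it is a leaf of $\mathcal H$), the intersection formulas give $N_{\F}\cdot F=\mathcal X(F)+\tang(\F,F)=\mathcal X(F)$ because transversality means $\tang(\F,F)=0$, while $N_{\mathcal H}\cdot F=F\cdot F+Z(\mathcal H,F)=0+0=0$ since $F$ moves in a fibration and $F$ is a regular fiber. Comparing with $\mathcal X(F)$: here $F$ is a fiber of the fibration whose generic fibers are $\F$-transversal, and the transversality of the regular $\F$ (which has empty singular set!) with $F$ at every point forces $F$ to be an elliptic curve or a rational curve depending on the genus of the fibration; in either case one gets the needed numerical identity by the adjunction-type computation, and this pins down $N_{\mathcal H}\cdot C=N_{\F}\cdot C$ on enough curves.

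The cleaner route, which I would actually pursue, avoids a curve-by-curve comparison: since $\Delta(\Pc)=\emptyset$, the foliation $\F$ is regular, and on a minimal surface (Lemma~\ref{lem3}) Brunella's classification (Theorem~\ref{teo:brintro}) applies — so $X$ is a torus or a Hopf surface and $\F$ is linear in the sense of Proposition~\ref{pro:new:ghys} (or the analogous Hopf picture). On such $X$ every fibration $\varphi$ is, up to the covering $\C^2\to X$, a linear fibration $dy-c\,dx$ or one of the few special fibrations on Hopf surfaces, because an elliptic or rational fibration on a torus is a linear pencil and a Hopf surface has only the obvious fibration $x/y$. Then a foliation $\mathcal H$ tangent to $\varphi$ whose fibers are $\F$-transversal must itself be given by a constant-coefficient closed $1$-form $a\,dx+b\,dy$ on the cover, and such a form lies in the span of the two forms defining $\F_0$ and $\F_\infty$ (which are themselves constant-coefficient on the cover, by the same classification); hence $\mathcal H=\F_\alpha$ for the appropriate $\alpha\in\Ce$. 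The transversality hypothesis is exactly what guarantees the coefficient of $\mathcal H$ is not proportional to that of $\F$, so $\mathcal H$ is a genuine new member and not excluded.

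I expect the main obstacle to be handling the Hopf-surface case uniformly with the torus case, since on Hopf surfaces one does not have a closed-form description of \emph{all} regular foliations as clean as Ghys's theorem, and one must argue that any fibration-with-transversal-foliation on a Hopf surface is, after passing to the finite unramified cover by a primary Hopf surface and then to $\C^2\setminus\{0\}$, one of the linear forms $dx+\alpha\,dy$ already appearing in the pencil (Example~\ref{ex:3}); a secondary worry is bookkeeping with multiple fibers of $\varphi$, which affect $T_{\mathcal H}$ and $N_{\mathcal H}$, but since $\mathcal H$ is regular these contributions vanish and $N_{\mathcal H}=T_{\mathcal H}^*\otimes K_X^{-1}=T_{\mathcal H}^*$ because $K_X$ is trivial (torus) or the foliation is transverse to a torsion/flat bundle (Hopf), so the numerics collapse to the statement $N_{\mathcal H}\cong N_{\F}$, whence $\mathcal H$ and $\F$ define a sub-pencil of $\Pc$ by the definition in Section~\ref{sec:definitions}, forcing $\mathcal H\in\Pc$.
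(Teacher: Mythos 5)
There is a genuine gap, in fact two. Your ``cleaner route'' is circular in the context of this paper and unsound as a standalone argument: you invoke the conclusion that $X$ is a torus or a Hopf surface and that all the relevant foliations are linear on the cover, but Theorem~\ref{teo:brintro} by itself gives no such thing (it requires $\Kod(X)<2$, which is only established later via the Chern class computation, and even then it allows elliptic/rational fibrations, Riccati-type transversal foliations and turbulent foliations on many other surfaces). In the paper, ruling out those other cases and arriving at ``torus or Hopf'' is precisely the content of Theorem~\ref{teodeltavacio}, whose proof uses Lemma~\ref{lem:new44} (together with Lemma~\ref{lem:new45}) as an input. So you cannot prove the lemma by assuming the classification of $X$ and the linearity of the pencil; that is exactly what the lemma is needed for.

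Your first route also does not close. The numerical comparisons you sketch ($N_{\mathcal H}\cdot F=0$ versus $N_{\F}\cdot F=\mathcal{X}(F)$ on a generic fiber) do not establish $N_{\mathcal H}\cong N_{\F}$, and even if they did, an isomorphism of normal bundles only says that $\F$ and $\mathcal H$ generate \emph{some} pencil $\Pc(\F,\mathcal H)$; it does not force $\mathcal H$ to lie in the given pencil $\Pc$, since the local $1$-form of $\mathcal H$ need not be a linear combination of the fixed generators $\omega_i,\eta_i$ (the space $H^0(X,\Omega^1_X\otimes N_{\F})$ can be larger than the $2$-dimensional span). The paper's proof is short and avoids all of this: since $\Delta(\Pc)=\emptyset$ the pencil is flat (Proposition~\ref{pro:lnflat}), so by Lemma~\ref{17} around any point $p$ of a generic fiber $F$ of the fibration there are coordinates in which $\Pc$ is $dx+\alpha dy$; hence the tangent directions of the members $\F_{\alpha}$ at $p$ sweep out all of $\P(T_pX)$, so some $\F_{\beta}$ is tangent to $F$ at $p$. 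If $F$ were not $\F_{\beta}$-invariant, then $\tang(\F_{\beta},F)=F\cdot F-T_{\F_{\beta}}\cdot F=F\cdot F-T_{\F}\cdot F=\tang(\F,F)=0$ (using $T_{\F_{\alpha}}=T_{\F}$ for all members, which follows from \eqref{eq1.1} with $\Delta(\Pc)=\emptyset$), contradicting the tangency at $p$; hence $F$ is $\F_{\beta}$-invariant and $\mathcal H=\F_{\beta}\in\Pc$. This pointwise ``directions fill the tangent space'' step coming from flatness is the key idea missing from your proposal.
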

\begin{proof}
Let $F$ be a regular fiber of $\mathcal{H}$, not invariant by $\F$, such that $\tang(\F,F)=0$. Given $p\in F$, by Lemma~\ref{17}, there exists a system of coordinates $(x,y,U)$, with $p\in U$ and $x(p)=y(p)=0$, where $\Pc$ is defined on $U$ by $dx+\alpha dy$. Hence $T_pX=\langle\frac{\partial}{\partial x},\frac{\partial}{\partial y}\rangle$, then there exists $\beta\in\Ce$ such that $\tang(\F_{\beta},\mathcal{H})>0$. 
We claim that $F$ is invariant by $\F_{\beta}$. Otherwise,
\[
 \tang(\F_{\beta},F)=F\cdot F-T_{\F_{\beta}}\cdot F=F\cdot F-T_{\F}\cdot F=\tang(\F,F)=0,
\]
a contradiction. Therefore, $\mathcal{H}=\F_{\beta}$. 
\end{proof}
\begin{lemma}\label{lem:new45}
Let $\Pc$ be a pencil with empty tangency set, on a compact complex surface $X$. Then $\Pc$ does not contain foliations tangent to rational fibrations. 
\end{lemma}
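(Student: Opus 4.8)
The plan is to argue by contradiction using the GSV index formulas recalled in Section~\ref{sec:prelim}. Suppose some $\F\in\Pc$ is tangent to a rational fibration $\varphi:X\to S$; after reparametrizing the parameter space of the pencil we may assume $\F=\F_0$ and write $\G=\F_\infty$.

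First I would record the bundle identity forced by the empty tangency set. By~\eqref{eq1.1} we have $\Og_X([\Delta(\Pc)])=T_{\F}^*\otimes N_{\G}$, and since $\Delta(\Pc)=\emptyset$ the left-hand side is $\Og_X$, so $T_{\F}=N_{\G}$; as $N_{\F}=N_{\G}$ by the definition of the pencil, we conclude $T_{\F}=N_{\F}$ (this is exactly the relation already used in the proof of Lemma~\ref{lem3}).

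Next I would pick a generic fiber $F=\varphi^{-1}(c)$ with $c\notin CV(\varphi)$. Since $X$ is compact, $S$ is a compact Riemann surface and $F$ is a smooth connected curve of genus $\gen(\varphi)=0$, hence $F\cong\P^1$ and $\chi(F)=2$. Moreover $F$ is a leaf of $\F$, so it is $\F$-invariant, and because $\Delta(\Pc)=\emptyset$ forces $\sing(\F)=\emptyset$, we get $\sing(\F)\cap F=\emptyset$ and therefore $Z(\F,F)=0$. Applying the GSV formulas~\cite[Lemme~3]{BR3},
\[
T_{\F}\cdot F=\chi(F)-Z(\F,F)=2,\qquad N_{\F}\cdot F=F\cdot F+Z(\F,F)=F\cdot F .
\]
Combined with $T_{\F}=N_{\F}$ this gives $F\cdot F=2$. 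On the other hand, $F$ is a generic fiber of a fibration on a compact surface, hence homologous to a disjoint generic fiber, so $F\cdot F=0$ — a contradiction, which proves the lemma.

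I do not expect a genuine obstacle here: the argument is a two-line intersection computation. The only points needing (routine) verification are that a generic fiber of $\varphi$ can be taken smooth and connected of genus zero, and that the GSV contribution along it vanishes because $\F$ is everywhere regular; both follow directly from the definitions recalled in Section~\ref{sec:prelim}.
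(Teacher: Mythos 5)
Your proof is correct, but it follows a genuinely different route from the paper. The paper argues locally: if $\F_{\beta}$ were tangent to a rational fibration $\varphi$, every other member would be a Riccati foliation relative to $\varphi$, and near a fiber one gets a product trivialization $U\simeq D\times\P^1$ in which the pencil is generated by $dx$ and $dy$; comparing with Example~\ref{ex:1} one sees that the pencil's tangency divisor is then forced to be non-empty along the fiber direction at infinity, a contradiction. You instead argue globally: from $\Delta(\Pc)=\emptyset$ and equation~\eqref{eq1.1} you get $T_{\F}=N_{\G}=N_{\F}$ (the reparametrization you invoke is legitimate, since all members have empty singular set and any two distinct members regenerate the pencil, with the same tangency set), and then on a smooth generic fiber $F\cong\P^1$, which is $\F$-invariant and misses $\sing(\F)=\emptyset$, the formulas of~\cite[Lemme~3]{BR3} give $T_{\F}\cdot F=\chi(F)=2$ while $N_{\F}\cdot F=F\cdot F=0$, contradicting $T_{\F}=N_{\F}$. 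This is exactly the intersection-theoretic style of the paper's proof of Lemma~\ref{lem3}, so it fits the toolkit already in use; it is shorter, avoids the Riccati local normal form and the appeal to Example~\ref{ex:1}, and in fact proves slightly more (no member of such a pencil can be tangent to any fibration of genus different from one, since only $\chi(F)=0$ escapes the contradiction). The paper's local argument, on the other hand, makes visible where the tangency would have to appear. Both are valid; yours is a clean alternative.
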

\begin{proof}
 Let $\F_{\beta} \in \Pc$ be tangent to a rational fibration $\varphi$. Then, for any $\alpha\neq\beta$, $\F_{\alpha}$ is a Riccati foliation relative to $\varphi$. Take a fiber $F$ of $\varphi$, then there exists a system of coordinates $(x,y,U)$ such that $F\subset U\simeq D\times\P^1$ such that the projections $\pi_1:D\times \P^1\to D$ and $\pi_2:D\times\P^1\to\P^1$ define $\F_{\beta}|_U$ and $\F_{\alpha}|_U$, respectively. Hence, the pencil $\Pc$ on $U$ is defined by the 1-form $dx+\alpha dy$, which has a non-empty tangency set, as Example~\ref{ex:1} shows. The lemma follows.
\end{proof}

The following theorem fully characterizes the compact surfaces which possess a pencil with empty tangency set.
\begin{theorem}\label{teodeltavacio}
Let $X$ be a compact complex surface which admits a pencil with empty tangency set. Then 
 $X$ is either a torus or a Hopf surface.
\end{theorem}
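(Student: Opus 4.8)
The plan is to invoke the Enriques--Kodaira classification and eliminate all surface classes except tori and Hopf surfaces, using the strong numerical constraint that a pencil with empty tangency set imposes. First I would record the key structural facts already available: by Proposition~\ref{pro:lnflat} the pencil $\Pc=\Pc(\F,\G)$ is flat; by \eqref{eq1.1}, since $\Delta(\Pc)=\emptyset$ we get $\Og_X([\Delta(\Pc)])=\Og_X$, hence $T_\F=N_\G=N_\F$ (as in the proof of Lemma~\ref{lem3}); and by Lemma~\ref{lem3}, $X$ is minimal. Moreover every $\F_\alpha$ is a \emph{regular} foliation (empty singular set), so Brunella's Theorem~\ref{teo:brintro} applies as soon as we know $\Kod(X)<2$; and Lemmas~\ref{lem:new44} and \ref{lem:new45} tell us the pencil contains no foliation tangent to a rational fibration, and that any foliation transverse to a fibration with $\F$-transverse generic fibers already lies in $\Pc$.

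Next I would dispose of the case $\Kod(X)=2$ (and more generally $\Kod(X)\ge 0$ with $K_X$ not numerically trivial): from $T_\F=N_\F^*\otimes\text{(something)}$... more directly, from $K_X=N_\F^*\otimes T_\F^*$ and $T_\F=N_\F$ we get $K_X=N_\F^*\otimes N_\F^*=T_\F^{\otimes 2}\cdot(\dots)$; writing it cleanly, $K_X = N_\F^* \otimes T_\F^* = T_\F^* \otimes T_\F^* = (T_\F^*)^{\otimes 2}$, so $K_X$ is divisible by $2$ in $\mathrm{Pic}(X)$ and $K_X\cdot K_X = 4\,(T_\F^*\cdot T_\F^*) = 4\,\mathrm{BB}(\F)$... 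I would instead use the index formulas. Since $\sing(\F_\alpha)=\emptyset$ for all $\alpha$, formula \eqref{eq:newmf} gives $0=m(\F)=c_2(X)+BB(\F)-N_\F\cdot K_X$, and \eqref{eq:newbbf} gives $BB(\F)=N_\F\cdot N_\F$. Combined with $K_X=-2N_\F$ (from $K_X=T_\F^*\otimes N_\F^* = N_\F^*\otimes N_\F^*$, using $T_\F=N_\F$), we obtain $N_\F\cdot K_X = -2\,N_\F\cdot N_\F$ and hence $0=c_2(X)+N_\F\cdot N_\F+2N_\F\cdot N_\F = c_2(X)+3\,N_\F\cdot N_\F$; together with $c_1(X)^2=K_X\cdot K_X=4\,N_\F\cdot N_\F$, Noether's formula $c_1^2+c_2=12\chi(\Og_X)$ yields a sharp relation. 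I expect this to force $c_2(X)=0$ and $c_1(X)^2=0$, which by Enriques--Kodaira leaves only tori, hyperelliptic surfaces, Kodaira surfaces, and Hopf surfaces (and possibly some others with $b_1$ odd) as candidates among minimal surfaces with $c_1^2=c_2=0$.

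Then I would knock out the remaining candidates one at a time. Hyperelliptic and Kodaira surfaces carry an (isotrivial) elliptic fibration; a regular foliation on such a surface is, by Brunella's list (Theorem~\ref{teo:brintro}), either tangent to this fibration, transverse to it, or turbulent. I would argue that in each of those cases the pencil would contain a foliation tangent to a fibration forcing (via Lemma~\ref{lem:new44}) a companion transverse foliation also in $\Pc$, and then analyze the tangency locus on a fiber to reach a contradiction with $\Delta(\Pc)=\emptyset$ — much as in Lemma~\ref{lem:new45}, but now for the elliptic fibration: along a fiber $F$, if $\F_\beta$ is tangent to the fibration and $\F_\alpha$ transverse, the pencil is locally $dx+\alpha dy$ on $D\times F$ and picks up tangencies. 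For the bielliptic/Kodaira cases one also uses that the base of the Albanese or the canonical fibration is a curve of positive genus, restricting $N_\F$. This leaves tori and Hopf surfaces, which by Examples~\ref{ex:2} and \ref{ex:3} genuinely occur. The main obstacle I anticipate is the bookkeeping in the case $\Kod(X)=1$ or in surfaces with $b_1$ odd that are not Hopf (e.g. certain properly elliptic surfaces or, a priori, Inoue surfaces): I would need to show that either they carry no regular foliation at all with $T_\F=N_\F$, or that such a foliation cannot sit in a pencil with empty tangency set — most cleanly by combining the numerical identity $c_2(X)=-3\,N_\F\cdot N_\F$ with the constraint that on these surfaces $N_\F$ is forced to be numerically trivial or pseudo-effective in a way incompatible with the sign, thereby pushing $X$ into the $\Kod<0$, $b_1$ odd regime where the only minimal surfaces are Hopf surfaces.
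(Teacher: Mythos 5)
Your numerical step does not actually reach $c_1^2(X)=c_2(X)=0$. From $T_{\F}=N_{\F}$ you correctly get $c_1(X)=2c_1(N_{\F})$ and, via \eqref{eq:newmf}--\eqref{eq:newbbf}, the relation $c_2(X)=-3\,N_{\F}\cdot N_{\F}$; but Noether's formula then only gives $N_{\F}\cdot N_{\F}=12\chi(\Og_X)$, which does not vanish for free, and ``I expect this to force $c_2=c_1^2=0$'' is precisely the missing step. The observation you need (and the paper's route) is that $BB(\F)$, exactly like $m(\F)$, is a sum over the empty set $\sing(\F)$, hence $BB(\F)=0$, so $N_{\F}\cdot N_{\F}=0$ by \eqref{eq:newbbf}; then $c_1^2(X)=4N_{\F}^2=0$ and $c_2(X)=N_{\F}\cdot K_X=-2N_{\F}^2=0$ follow immediately. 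This gap is easily repaired.

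The serious gap is in the elimination of the remaining surface classes. Your key claim --- that if $\F_{\beta}\in\Pc$ is tangent to an elliptic fibration and $\F_{\alpha}$ is transverse, then ``the pencil is locally $dx+\alpha dy$ on $D\times F$ and picks up tangencies'' as in Lemma~\ref{lem:new45} --- is false: the local model $dx+\alpha dy$ on $D\times E$ with $E$ elliptic has empty tangency set, and Example~\ref{ex:2} (a pencil of linear foliations on a torus, containing elliptic fibrations together with everywhere-transverse members) realizes exactly this configuration. The mechanism of Lemma~\ref{lem:new45} is special to rational fibers: with $T_{\F_{\alpha}}=N_{\F_{\alpha}}$, transversality along a fiber $F$ forces $N_{\F_{\alpha}}\cdot F=\chi(F)$ and $N_{\F_{\alpha}}\cdot F=F\cdot F=0$, which is contradictory when $\chi(F)=2$ but perfectly consistent when $\chi(F)=0$. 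So bielliptic and Kodaira surfaces (and properly elliptic surfaces with $c_2=0$, and Inoue surfaces) cannot be ruled out by a fiberwise tangency count, and your appeal to the Albanese base or pseudo-effectivity of $N_{\F}$ is not an argument. The paper closes this case globally: Inoue surfaces are excluded because they admit at most two regular foliations while a pencil supplies a $\P^1$ of them; if $X$ is neither a torus nor a Hopf surface, Brunella's Theorem~\ref{teo:brintro} together with Lemmas~\ref{lem:new44} and~\ref{lem:new45} places a foliation tangent to an elliptic fibration $\varphi$ inside $\Pc$; every other member is everywhere transverse to $\varphi$, so $\varphi$ is a flat fiber bundle, and since $X$ is not a torus the base must be $\P^1$; the global holonomy of a transverse member is then a representation of the trivial group $\Pi_1(\P^1)$, so its leaves are sections and it is a rational fibration, contradicting Lemma~\ref{lem:new45}. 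Without an argument of this kind your case analysis does not terminate.
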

\begin{proof}
Let $\Pc=\Pc(\F,\G)$ be a pencil on $X$ such that $\Delta(\Pc)=\emptyset$.
From equation~\eqref{eqq4}, $T_{\F}=N_{\G}=N_{\F}$. Since $K_X=T_{\F}^*\otimes N_{\F}^*$, $K_X=N_{\F}^*\otimes N_{\F}^*$, which in turn implies $c_1(X)= 2 c_1(N_{\F})$ (cf.~\cite[p.~571]{BR3}).
On the other hand, $\Delta(\Pc)=\emptyset$ implies $\sing(\F_{\alpha})=\emptyset$, for all $\alpha \in \Ce$, so
\[
0=BB(\F)=N_{\F}^2=\dfrac{1}{4}c_1(X)^2,
\]
where the second equality follows from equation~\eqref{eq:newbbf}. 
Moreover, from equation~\eqref{eq:newmf}, 
we obtain
\begin{equation}\label{eqxx}
 c_2(X)=m(\F)-BB(\F)+ N_{\F}\cdot K_X=N_{\F}\cdot K_X=-2N_{\F}^2=0.
\end{equation}
Thus, from Lemma~\ref{lem3}, $X$ is a minimal surface with $c_1^2(X)=c_2(X)=0$. By Kodaira's Compact Surfaces Classification Theorem~\cite[Theorem 1.1]{BPV} 
we obtain $\Kod(X)<2$. 

From the proof of Theorem~\ref{teo:brintro} in~\cite{BR3}, Inoue surfaces just admit at most two regular foliations, so they cannot contain a pencil of foliations with empty tangency set. Therefore $X$ is not an Inoue surface.

Now assume that $X$ is neither a torus nor a Hopf surface. Then from Theorem~\ref{teo:brintro} and Lemma~\ref{lem:new44}, there exists a foliation $\F_{\alpha}\in\Pc$ which is an elliptic fibration, that is, $\F_{\alpha}$ is tangent to an elliptic fibration $\varphi:X\to S$, where
 $S$ is either an elliptic curve or isomorphic to $\P^1$. Note that, given any $\F_{\beta}\in\Pc$, $\beta\neq \alpha$, $\F_{\beta}$ is transversal to $\varphi$. Thus $\varphi$ is a principal fiber bundle with locally constant transition functions. From~\cite[p 197]{BPV}, since $X$ is not a torus,  $S$ is not an elliptic curve and $S=\P^1$.

Given $\F_{\beta}\in\Pc$, the holonomy group of $\F_{\beta}$ is also trivial, because $\Pi_1(\P^1)$ is trivial. This implies that for any leaf $L$ of $\F_{\beta}$, $\varphi|_L:L\to\P^1$ is a biholomorphism.
Therefore $\F_{\beta}$ is a rational fibration, a contradiction with Lemma~\ref{lem:new45}.
\end{proof}

\begin{theorem}\label{teo:linearfol}
Let $\Pc$ be a pencil of foliations with empty tangency set, on a compact complex surface $X$. Then $X$ is either a torus or a Hopf surface and $\Pc$ is generated by linear foliations.
\end{theorem}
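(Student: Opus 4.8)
The plan is to build on Theorem~\ref{teodeltavacio}, which already tells us $X$ is a torus or a Hopf surface, so what remains is to produce the explicit linear generators of $\Pc$ in each case. First I would recall from the proof of Theorem~\ref{teodeltavacio} that $\Delta(\Pc)=\emptyset$ forces $\sing(\F_\alpha)=\emptyset$ for every $\alpha\in\Ce$, so $\Pc$ is a pencil of \emph{regular} foliations. This is the crucial structural fact: it lets us invoke the classification of regular foliations on each of the two surface types and pin down the generators.

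For the torus case, I would apply Proposition~\ref{pro:new:ghys}: passing to the universal cover $\pi:\C^2\to X$, any regular foliation is induced by a closed $1$-form $b(x)\,dx+dy$ with $b$ constant or elliptic. Two distinct members $\F_0,\F_\infty$ of $\Pc$ lift to two such closed $1$-forms $\omega,\eta$; since $N_{\F_0}=N_{\F_\infty}$ and the surface is a torus (so line bundles restricted to the cover are trivial), the pullbacks $\pi^*\omega,\pi^*\eta$ are actual holomorphic $1$-forms on $\C^2$, hence global elements of $H^0(X,\Omega^1_X)$ once one checks $\Gamma$-invariance; these form a $2$-dimensional space spanned by $dx,dy$ in suitable linear coordinates, so up to reparametrization $\F_\alpha$ is defined by $dy+\alpha\,dx$, i.e. $\Pc$ is generated by linear foliations. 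The point to be careful about is ruling out the genuinely elliptic $b(x)$: I would argue that if $b$ were non-constant, the normal-bundle-isomorphism condition (equivalently $T_{\F_0}=N_{\F_0}$, equation~\eqref{eqq4}, which gives $c_1(X)=2c_1(N_{\F_0})$ and triviality of $T_{\F_0}$ on a torus) forces the defining forms to be translation-invariant, hence $b$ constant.

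For the Hopf surface case, I would use Brunella's list (Theorem~\ref{teo:brintro}): on a Hopf surface the only regular foliations are the two ``trivial'' ones coming from the two coordinate foliations on $\C^2\setminus\{0\}$ together with, more relevantly here, the fact that every regular foliation is induced on the universal cover $\C^2\setminus\{0\}$ by a linear $1$-form $dy-\alpha\,dx$ (as in Proposition~\ref{pro:24} and Example~\ref{ex:3}). Since $\F_0$ and $\F_\infty$ are regular and have isomorphic normal bundles, after choosing the universal cover and the standard $\Gamma$-action they lift to two independent linear forms $dx$, $dy$, and the pencil they generate lifts to $dy+\alpha\,dx$; descending, $\Pc$ is generated by linear foliations on $X$. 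Here the main obstacle is verifying that \emph{both} chosen generators descend from genuinely linear forms rather than merely from forms that are projectively linear in Lins Neto coordinates (Lemma~\ref{17}); this is handled by noting that flatness of $\Pc$ (Proposition~\ref{pro:lnflat}) together with the global holomorphic projective structure on all of $X$ (since $\Delta(\Pc)=\emptyset$) upgrades the local linearizing coordinates of Lemma~\ref{17} to a developing map into $\P^2$, whose image in each case is an affine chart, forcing the transition functions of $\Pc$ to be affine-linear and hence the forms to be linear on the cover.

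The step I expect to be the genuine obstacle is this last upgrade from \emph{locally} linear (Lemma~\ref{17}) to \emph{globally} linear: the lemma only gives local coordinates in which $\F_\alpha$ reads $dy+\alpha\,dx$, with transition maps that are constant-scalar rescalings of $(dx,dy)$, so a priori the developing map could have monodromy in the full affine or projective group. One must use the explicit geometry of the torus (simply connected cover $\C^2$, so no monodromy) and of the Hopf surface (cover $\C^2\setminus\{0\}$, fundamental group $\Z$ acting by the explicit contraction $f$, whose linear part is the only possible monodromy of an affine structure) to conclude the monodromy is by linear maps and the developing map is an affine embedding of the cover; everything else is then bookkeeping with $H^0(X,\Omega^1)$ or with the explicit $\Gamma$-action.
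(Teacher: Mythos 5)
Your torus half is essentially the paper's argument: after Theorem~\ref{teodeltavacio}, apply Proposition~\ref{pro:new:ghys} and write the two generators as $b(x)dx+dy$ and $c(x)dx+dy$. But the one step that carries content there — excluding non-constant elliptic $b,c$ — is where you diverge and only gesture. The paper gets it directly from the hypothesis: $\Delta(\Pc)=\emptyset$ forces $b(x)\neq c(x)$ at every point, and non-constant elliptic functions omit no value, so the coefficients must be constants (note also that common poles of $b,c$ would already create tangency along the corresponding fibers). Your alternative, via \eqref{eqq4} and $K_X=\mathcal{O}_X$ giving $N_{\F}^{\otimes 2}=\mathcal{O}_X$ while a non-constant elliptic $b$ would make $N_{\F}=\mathcal{O}_X(D)$ with $D$ a nonzero effective sum of fibers, could probably be made rigorous, but as written ("forces the defining forms to be translation-invariant") it is an assertion, not an argument.

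The genuine gap is the Hopf case. Your starting claim — that every regular foliation on a Hopf surface is induced on $\C^2\setminus\{(0,0)\}$ by a linear $1$-form $dy-\alpha\,dx$, "as in Proposition~\ref{pro:24} and Example~\ref{ex:3}" — is not what those statements say and is false in general: Proposition~\ref{pro:24} concerns linear foliations only, and Hopf surfaces also carry regular foliations coming from vector fields with a singularity at the origin (for instance the radial foliation $x\,dy-y\,dx=0$), while elliptic Hopf surfaces carry elliptic fibrations and foliations transverse to them (Theorem~\ref{teo:brintro}). Excluding precisely these possibilities is the content of the theorem here, and your fallback route — promoting the local linearizing charts of Lemma~\ref{17} to a global affine/projective developing map and controlling its monodromy — is exactly the step you yourself flag as the obstacle; it is left unproved (why should the image be an affine chart, and why should the monodromy be linear?), and nothing in the paper's toolkit is set up to deliver it. The paper's proof avoids all of this: it first rules out elliptic Hopf surfaces, since there both generators would be defined near $\C^2\setminus\{(0,0)\}$ by a quasihomogeneous vector field (Brunella), which already forces $\Delta(\Pc)\neq\emptyset$; then, on a non-elliptic Hopf surface, Brunella's dichotomy says each foliation is either linear or extends to the origin with a saddle-type singularity, and the second alternative again produces a nonempty tangency set, so both generators are linear. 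Without either that dichotomy or a completed monodromy argument, your Hopf case does not close.
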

\begin{proof}
Let $\Pc=\Pc(\F,\G)$. By Theorem~\ref{teodeltavacio}, $X$ is either a torus or a Hopf surface.  First we assume that $X$ is a torus. By Proposition~\ref{pro:new:ghys}, there exists a covering $\pi:\C^2\to X$ such that $\F$ and $\G$ are induced by the closed 1-forms $\omega=b(x)dx+dy$ and $\eta=c(x)dx+dy$, respectively, where $b$ and $c$ are either constants or elliptic functions. Let $x_0$ be a regular point of $b$ and $c$, then there exists a neighborhood $U$ of $\pi(x_0,0)$ such that 
\[
\Delta(\Pc)\cap U=\{\pi(x,y)\in U\::\:b(x)=c(x)\}=\emptyset.
\]
Since $b(x)\neq c(x)$, $b$ and $c$ must be both constants and, in particular, $\Pc$ is a pencil generated by linear foliations.

Now assume that $X$ is a Hopf surface. Note that $X$ is not elliptic, that is, it does not admit an elliptic fibration, otherwise by~\cite[p. 585]{BR3}, $\F$ and $\G$ would be defined in a neighborhood of $\C^2\setminus\{(0,0)\}$ by a quasihomogeneous vector field, so $\Delta(\Pc)\neq\emptyset$. 
Since $X$ is not elliptic, by~\cite[p. 586]{BR3}, in $\C^2\setminus\{(0,0)\}$ each foliation is either linear or can be extended to a foliation with a saddlepoint singularity at the origin, which cannot be possible since the tangency set is non-empty.
\end{proof}
Theorems~\ref{teodeltavacio} and \ref{teo:linearfol} imply that the only surfaces which admit pencils with empty tangency set are tori and Hopf surfaces. 
We now characterize $I_p(\Pc)$ for pencils with empty tangency set defined on these surfaces.

The following corollary is consequence from Proposition~\ref{pro:new:vitorio}.
\begin{corollary}\label{coro:ipptoro}
Let $X$ is a torus and let $\Pc$ be a pencil on $X$ with empty tangency set. If $\# I_p(\Pc)\geq 3$ then $X=E\times E$, with $E=\C/\langle 1,\tau\rangle$ and $I_p(\Pc)\setminus\{\infty\}$ is either $\Q$ or $\Q(\tau)$, only up to a reparametrization of the parameter space of the pencil.
\end{corollary}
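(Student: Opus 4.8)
The plan is to translate the statement into one about linear foliations on a complex torus and then invoke Proposition~\ref{pro:new:vitorio}. Since $X$ is a torus admitting a pencil $\Pc$ with empty tangency set, Theorem~\ref{teo:linearfol} shows that $\Pc$ is generated by linear foliations: there is a covering $\pi\colon\C^2\to X$ and linearly independent translation-invariant $1$-forms $\omega,\eta\in H^0(X,\Omega^1_X)$ (linearly independent because $\F\neq\G$) such that $\F_\alpha$ is induced by $\omega+\alpha\eta$ for every $\alpha\in\Ce$. As $\dim H^0(X,\Omega^1_X)=2$, the map $\alpha\mapsto[\omega+\alpha\eta]$ is a bijection $\Ce\to\P H^0(X,\Omega^1_X)$, and a linear foliation carries a holomorphic first integral exactly when its class lies in $I(X)$. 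Hence this bijection carries $I_p(\Pc)$ onto $I(X)$, so $\#I_p(\Pc)=i(X)$.

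First I would feed the hypothesis $i(X)=\#I_p(\Pc)\geq 3$ into Proposition~\ref{pro:new:vitorio}, obtaining $i(X)=\infty$ together with an elliptic curve $E=\C/\langle 1,\tau\rangle$ such that $X=E\times E$. This gives the first part of the conclusion; it remains to identify $I_p(\Pc)\setminus\{\infty\}$ up to a reparametrization. Since $\#I_p(\Pc)\geq 3$, at least two of its elements are finite, say $\alpha_1\neq\alpha_2\in I_p(\Pc)\setminus\{\infty\}$. Put $\omega_1=\omega+\alpha_1\eta$ and $\omega_2=\omega+\alpha_2\eta$; these are linearly independent $1$-forms on $E\times E$, each admitting a holomorphic — hence in particular a rational — first integral, so Proposition~\ref{pro:new:vitorio} applies to the pair $\omega_1,\omega_2$. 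Let $\phi$ be the M\"obius transformation of $\Ce$ with $\phi(\alpha_1)=0$ and $\phi(\alpha_2)=\infty$, and let $\Pc'$ be the pencil obtained from $\Pc$ by reparametrizing with $\phi$; then $\F'_\lambda$ is induced, up to an irrelevant nonzero scalar, by $\omega_1+\lambda\omega_2$. By the last assertion of Proposition~\ref{pro:new:vitorio},
\[
I_p(\Pc')\setminus\{\infty\}=\{\lambda\in\C\colon\omega_1+\lambda\omega_2\text{ has an holomorphic first integral}\}=\End(E)\otimes\Q,
\]
which is either $\Q$ or $\Q(\tau)$, proving the corollary.

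The step that deserves care is the first one: one must check that, via the dictionary between linear foliations and $\P H^0(X,\Omega^1_X)$, the pencil parameter $\alpha$ really matches the projective coordinate, so that a pencil with empty tangency set sweeps out \emph{all} linear foliations on $X$ and $I_p(\Pc)$ is genuinely in bijection with $I(X)$; one should also confirm that a holomorphic first integral of a linear foliation on $E\times E$ qualifies as a rational first integral in the sense under which Proposition~\ref{pro:new:vitorio} is stated, and that the transformation $\phi$ is a legitimate reparametrization of the parameter space of $\Pc$ in the sense of the statement. Once this dictionary is in place, the corollary is an immediate consequence of Theorem~\ref{teo:linearfol} and Proposition~\ref{pro:new:vitorio}; no further geometric input is needed.
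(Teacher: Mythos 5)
Your argument is correct and follows exactly the route the paper intends: the paper states Corollary~\ref{coro:ipptoro} as an immediate consequence of Theorem~\ref{teo:linearfol} and Proposition~\ref{pro:new:vitorio}, and your proof simply makes explicit the dictionary $\alpha\mapsto[\omega+\alpha\eta]$ between the pencil and $\P H^0(X,\Omega^1_X)$ together with the M\"obius reparametrization sending two finite elements of $I_p(\Pc)$ to $0$ and $\infty$. The details you flag (holomorphic first integrals of linear foliations on $E\times E$ yielding rational ones, and the reparametrization being legitimate since $\Pc(\F_{\alpha_1},\F_{\alpha_2})=\Pc$) are indeed the right points to check and they hold.
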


The following corollary is consequence of Proposition~\ref{pro:24}.
\begin{corollary}\label{coro:ipphopf}
Let $X$ be a Hopf surface and let $\Pc$ be a pencil on $X$ with empty tangency set. Then $I_p(\Pc)$ is empty.
\end{corollary}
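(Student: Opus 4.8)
The statement is an immediate consequence of the structural description of $\Pc$ provided by Theorem~\ref{teo:linearfol} together with the non-existence statement of Proposition~\ref{pro:24}; the plan is simply to make the link between the two precise.

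First I would apply Theorem~\ref{teo:linearfol}: since $X$ is a Hopf surface admitting the pencil $\Pc=\Pc(\F,\G)$ with $\Delta(\Pc)=\emptyset$, the pencil is generated by linear foliations. Lifting to the universal cover $\C^2\setminus\{(0,0)\}$, the foliations $\F$ and $\G$ are then induced by constant $1$-forms $\omega=a_1\,dx+b_1\,dy$ and $\eta=a_2\,dx+b_2\,dy$, and because $\F\neq\G$ the pairs $(a_1,b_1)$ and $(a_2,b_2)$ are linearly independent. Hence, for each $\alpha\in\Ce$, the member $\F_\alpha$ is induced by the constant $1$-form
\[
\omega+\alpha\eta=(a_1+\alpha a_2)\,dx+(b_1+\alpha b_2)\,dy,
\]
so every $\F_\alpha$ is again linear, its leaves being images of parallel affine lines.

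Next I would rewrite each such form in the normal form of Proposition~\ref{pro:24}. Whenever $b_1+\alpha b_2\neq0$, dividing by this coefficient shows that $\F_\alpha$ is defined by $dy-\mu\,dx$, where
\[
\mu=\mu(\alpha)=-\frac{a_1+\alpha a_2}{b_1+\alpha b_2};
\]
the assignment $\alpha\mapsto\mu$ is a M\"obius automorphism of $\Ce$ thanks to the independence of the two coefficient pairs, while the exceptional value $b_1+\alpha b_2=0$ produces the form $dx=0$, that is $\mu=\infty$. Since both the existence of a holomorphic first integral and hence the set $I_p(\Pc)$ are invariant under such a reparametrization of the parameter space, it suffices to show that no foliation of the form $dy-\mu\,dx$, with $\mu\in\Ce$, carries a holomorphic first integral on $X$. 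This is exactly Proposition~\ref{pro:24}, whose proof treats the finite values of $\mu$ directly and handles $\mu=\infty$ by the analogous argument. Therefore $I_p(\Pc)=\emptyset$.

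The computations here are routine, so I do not expect a genuine obstacle; the only point requiring care is the bookkeeping that identifies the members of $\Pc$ with the explicit family $\{dy-\mu\,dx\}_{\mu\in\Ce}$ of Proposition~\ref{pro:24}. Concretely, one must verify that the generating constant forms sweep out this family bijectively via the M\"obius reparametrization above, and that the single parameter value yielding $dx=0$ is accounted for by the $\mu=\infty$ case treated at the end of the proof of Proposition~\ref{pro:24}. Once this identification is recorded, the conclusion follows at once.
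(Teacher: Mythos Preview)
Your proposal is correct and follows essentially the same approach as the paper: the paper states the corollary as an immediate consequence of Proposition~\ref{pro:24}, implicitly relying on Theorem~\ref{teo:linearfol} to identify the members of $\Pc$ with the linear family $\{dy-\mu\,dx\}_{\mu\in\Ce}$. You have simply made this identification explicit via the M\"obius reparametrization, which is exactly the bookkeeping the paper leaves to the reader.
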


\section{Classification of pencils with invariant and non-empty tangency set}\label{sec:invariant}

The following lemma can be found in~\cite[Lemma~3.2.1]{LN3}.
\begin{lemma}\label{lem:new:22}
Let $\F$ and $\G$ be foliations on a compact complex surface $X$ with isolated singularities and isomorphic tangent bundles.
Assume that $\F$ has an holomorphic first integral $f:X\to S$, where $S$ is a compact Riemann surface. 
Thus,
\begin{enumerate}
 \item If $\gen(f)=0$ then $\F=\G$.
 \item If $\gen(f)=1$ and $\F\neq \G$ then $\G$ is turbulent relative to $f$.
 \item If $\gen(f)\geq 2$ and $\F\neq\G$ then $\tang(\G,F)>0$, for any regular fiber $F$ of $f$, non-invariant by $\G$.
\end{enumerate}
\end{lemma}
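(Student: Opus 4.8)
The plan is to separate the two assertions: that $X$ is rational, and the trichotomy for $I_p(\Pc)$. For rationality I would use only the hypothesis $\gen(f)=0$. The map $f\colon X\to\P^1$ is then a fibration whose generic fibre has genus $0$, i.e.\ a rational fibration over $\P^1$; by the Enriques--Kodaira classification a compact complex surface carrying a rational fibration over a rational base is a rational surface (it is uniruled with base $\P^1$, hence has $q=0$ and Kodaira dimension $-\infty$, and Castelnuovo's criterion applies). This settles the first claim independently of the rest.

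For the second claim the decisive tool is the dictionary furnished by Ehresmann's theory of foliations transverse to a fibration (Section~\ref{sec:holonomy}), which is available here because $\Delta(\Pc)$ is invariant and so, by Lemma~\ref{lem:finally}, $\Pc$ is transversal to $f$. For $\alpha\in IS(\Pc)$ the foliation $\F_\alpha$ has a first integral on $X$ if and only if its global holonomy group $G_\alpha$ is finite: finiteness is exactly what forces each leaf of $\F_\alpha|_U$ to be a compact (finite) covering of $W$, and the resulting fibration extends across the finitely many special fibres $f^{-1}(c_j)$. Since $NI(\Pc)$ is finite and $\infty$ is a single point, $I_p(\Pc)\setminus IS(\Pc)\subset NI(\Pc)\cup\{\infty\}$ is finite, so the whole trichotomy reduces to describing $E:=\{\alpha\in IS(\Pc):G_\alpha\text{ is finite}\}$: if $E$ is finite we land in~(1); if $E=IS(\Pc)$ in~(2); and if $E=\Q\cap IS(\Pc)$ up to reparametrisation in~(3).

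To analyse $E$ I invoke the preceding theorem, which under these very hypotheses presents $G_\alpha$ by generators $f_{j,\alpha}$ of one of two shapes. In the \emph{affine} case $f_{j,\alpha}(z)=\lambda_j z+a_j\alpha+b_j$ the group lies in $\mathrm{Aff}(\C)$, and a finitely generated subgroup of $\mathrm{Aff}(\C)$ is finite precisely when all linear parts are roots of unity and the generators share a fixed point (which automatically kills translations). The multipliers $\lambda_j$ are independent of $\alpha$, so either some $\lambda_j$ is not a root of unity and $E=\emptyset$, or the remaining constraints ``$a_j\alpha+b_j=0$ for $\lambda_j=1$'' and ``$(a_j\alpha+b_j)/(1-\lambda_j)$ are all equal for $\lambda_j\neq1$'' are \emph{$\C$-affine-linear} equations in $\alpha$. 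Such a system holds identically or has at most one solution; hence $E$ is all of $IS(\Pc)$ or finite, yielding~(2) or~(1) and \emph{never}~(3). The structural reason (3) cannot occur here is that these are genuine equalities over $\C$, so each nontrivial one is zero-dimensional.

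The multiplicative case is where~(3) appears, and its arithmetic is the step I expect to be the main obstacle. Here $f_{j,\alpha}(z)=\exp(2\pi i(\mu_j\alpha+\nu_j))z$, so $G_\alpha\subset\C^*$ is abelian and finite iff each generator is a root of unity, i.e.\ iff $\mu_j\alpha+\nu_j\in\Q$ for all $j$. Viewing $\C$ as a $\Q$-vector space, each such condition defines a $\Q$-affine subset, which may be infinite — the essential contrast with the affine case. If all $\mu_j=0$ the condition is $\alpha$-independent, giving $E=IS(\Pc)$ or $E$ empty, i.e.~(2) or~(1). If some $\mu_{j_0}\neq0$, I reparametrise the pencil by $\alpha\mapsto\mu_{j_0}\alpha+\nu_{j_0}$, turning the $j_0$-th condition into $\alpha\in\Q$; after substitution every remaining condition reads $\tilde\mu_j\alpha+\tilde\nu_j\in\Q$. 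A $\Q$-linear-algebra computation then shows that, restricted to $\alpha\in\Q$, such a condition is either automatically satisfied (exactly when $\tilde\mu_j,\tilde\nu_j\in\Q$) or forces $\alpha$ into a finite set, the latter following from the distinctness of the $\Q$-lines $\Q$ and $\tilde\mu_j\Q$ in $\C$ when $\tilde\mu_j\notin\Q$. Consequently $E$ is either exactly $\Q\cap IS(\Pc)$ up to reparametrisation, giving~(3), or finite, giving~(1); the delicate point is verifying that no proper \emph{infinite} $\Q$-affine set can survive the intersection, which is precisely where the torsion-free-modulo-$\Q$ structure of $\C^*$ is used.
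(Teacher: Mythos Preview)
Your proposal does not address the stated lemma at all. Lemma~\ref{lem:new:22} is about two foliations $\F$, $\G$ with isomorphic \emph{tangent} bundles (no pencil, no flatness, no $I_p$) and asserts a trichotomy governed by $\gen(f)$: either $\F=\G$, or $\G$ is turbulent, or every generic non-invariant fibre has positive tangency with $\G$. What you have written is instead an outline for the unnumbered main theorem at the end of Section~\ref{sec:invariant} (rationality of $X$ and the description of $I_p(\Pc)$ when $\gen(f)=0$). Nothing in your text touches the actual content of the lemma: you never use $T_{\F}=T_{\G}$, you never discuss the case $\gen(f)\geq 2$, and you do not argue why $\gen(f)=0$ forces $\F=\G$. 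In the paper this lemma is not proved but cited from \cite[Lemma~3.2.1]{LN3}; a correct proof goes through the tangency formula $T_{\G}\cdot F=F\cdot F-\tang(\G,F)$ together with $T_{\F}\cdot F=\chi(F)-Z(\F,F)=\chi(F)$ for a regular fibre, yielding $\tang(\G,F)=F\cdot F-\chi(F)=-\chi(F)=2\gen(F)-2$, from which all three items follow.

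Even if one reads your text as aimed at the main theorem, there is a genuine gap in the direction ``$G_\alpha$ finite $\Rightarrow \alpha\in I_p(\Pc)$'': you assert this equivalence outright, but the paper needs the additional hypotheses of Lemma~\ref{lemagen01} (local meromorphic first integrals at the singularities and $Z(\F_\alpha,C)\geq 1$ on each component of $\Delta(\Pc)$) to promote finite holonomy to a global first integral, and obtains these from the Riccati structure and the proof of \cite[Proposition~2]{BR1}. Your sketch also diverges from the paper in the affine case: you conclude that the common-fixed-point conditions have at most one solution in $\alpha$, whereas the paper (Lemma~\ref{lemagen02}) shows that once two values of $\alpha$ lie in $I_p(\Pc)\cap IS(\Pc)$ the ratios $a_j/(1-\lambda_j)$ and $b_j/(1-\lambda_j)$ are independent of $j$, so $G_\alpha$ is finite for \emph{all} $\alpha\in IS(\Pc)$; your ``at most one solution'' claim is not what is needed and does not by itself yield case~(2).
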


\begin{proposition}
Let $\Pc=\{\F_{\alpha}\}_{\alpha\in\Ce}$ be a pencil on a compact complex surface $X$ such that $\F_{\infty}$ has an holomorphic first integral $f:X\to S$. If $\infty\in IS(\Pc)$ then $\gen(f)\geq 1$. In addition:
\begin{enumerate}
 \item If $\gen(f)=1$ then there exist $c_1,\ldots,c_k\in S$ such that $\ds\Delta(\Pc)\subset\bigcup_{j=1}^k f^{-1}(c_j)$.
 \item If $\gen(f)\geq 2$ then $\Delta(\Pc)$ must have a non-invariant component.
\end{enumerate}
In particular, $\Delta(\Pc)$ is invariant if, and only if, $\gen(f)=1$.
\end{proposition}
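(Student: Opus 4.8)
The plan is to analyze the three mutually exclusive cases $\gen(f)=0$, $\gen(f)=1$, $\gen(f)\ge 2$ and show that $\infty\in IS(\Pc)$ forces $\gen(f)\ge 1$, while invariance of $\Delta(\Pc)$ is equivalent to $\gen(f)=1$. The main tool is Lemma~\ref{lem:new:22} applied to the pair $\F=\F_\infty$, $\G=\F_\alpha$ for $\alpha\in IS(\Pc)\setminus\{\infty\}$; recall that such a pair has isolated singularities and, by the discussion after equation~\eqref{eq:deltapc}, satisfies $N_{\F_\infty}=N_{\F_\alpha}$, hence isomorphic tangent bundles as well (since $K_X=N^*\otimes T^*$). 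Note also that $\Pc$ is not constant, so $\F_\infty\neq\F_\alpha$ for generic $\alpha$.

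First I would dispose of $\gen(f)=0$. If $\gen(f)=0$ then by Lemma~\ref{lem:new:22}(1) applied to $\F_\infty$ and any $\F_\alpha$ with $\alpha\in IS(\Pc)$, we get $\F_\alpha=\F_\infty$ for all such $\alpha$; but $IS(\Pc)$ is cofinite in $\Ce$ (since $NI(\Pc)$ is finite), so the pencil would be constant, a contradiction. This proves $\gen(f)\ge1$ whenever $\infty\in IS(\Pc)$, and in particular $\gen(f)=0$ is incompatible with the hypotheses, so it cannot occur at all; this also shows that the case $\gen(f)=0$ never contributes to the ``invariant'' side of the final equivalence.

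Next, the case $\gen(f)=1$. Pick any $\alpha\in IS(\Pc)\setminus\{\infty\}$; then $\F_\alpha\neq\F_\infty$ and by Lemma~\ref{lem:new:22}(2), $\F_\alpha$ is turbulent relative to $f$, i.e.\ the generic fibers of $f$ are $\F_\alpha$-transversal, so $\tang(\F_\alpha,F)=0$ for generic $F$. This means $\sing(\F_\alpha)$ meets only finitely many fibers of $f$, say those over $c_1,\dots,c_k\in S$ (finitely many because $\sing(\F_\alpha)$ is finite). Since $\sing(\F_\infty)=\sing(f)$ is likewise contained in finitely many fibers — indeed $f$ is constant along each component of $\sing(f)$, so each such component lies in a single fiber $f^{-1}(c_j)$ — and since by~\eqref{eq:deltapc} $\Delta(\Pc)=\bigcup_{\alpha}\sing(\F_\alpha)$ with all but finitely many $\F_\alpha$ already covered, we conclude $\Delta(\Pc)\subset\bigcup_{j=1}^k f^{-1}(c_j)$, which gives conclusion (1). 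Each such fiber $f^{-1}(c_j)$ is $\F_\infty$-invariant and, being a fiber, is $\F_\alpha$-invariant for every $\alpha$ (a leaf of the pencil through a point of the fiber either equals the fiber or is transversal to it, but a transversal leaf cannot be contained in $\Delta(\Pc)$); hence every component of $\Delta(\Pc)$ is invariant for the pencil, so $\Delta(\Pc)$ is invariant.

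Finally, the case $\gen(f)\ge 2$. Again take $\alpha\in IS(\Pc)\setminus\{\infty\}$ with $\F_\alpha\neq\F_\infty$; by Lemma~\ref{lem:new:22}(3), $\tang(\F_\alpha,F)>0$ for every regular fiber $F$ non-invariant by $\F_\alpha$. If every regular fiber were $\F_\alpha$-invariant then $f$ would be a first integral of $\F_\alpha$ too, forcing $\F_\alpha=\F_\infty$ (two foliations with the same first integral coincide), a contradiction; so some regular fiber $F$ is non-invariant, and then $\tang(\F_\alpha,F)>0$ shows $\sing(\F_\alpha)\cap F\neq\emptyset$ lies on a non-invariant curve $F\not\subset\Delta(\Pc)$ — more to the point, the tangency locus $\{\omega\wedge\eta=0\}$ defining $\Delta(\Pc)$ contains points of infinitely many non-invariant fibers, so $\Delta(\Pc)$ has a component that is not $f$-vertical and hence is moved by the flow of the pencil; such a component cannot be invariant for all $\F_\beta$ (if it were, it would be $\F_0$- and $\F_\infty$-invariant simultaneously, forcing it to be a fiber of $f$). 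This yields conclusion (2) and shows $\Delta(\Pc)$ is not invariant when $\gen(f)\ge2$. Combining the three cases gives the final ``if and only if''. The step I expect to be most delicate is the last one: making precise that the positive tangency index $\tang(\F_\alpha,F)>0$ on a moving family of fibers actually forces a \emph{non-invariant} component of $\Delta(\Pc)$, rather than being absorbed into the vertical part $\sing(\F_\infty)$; I would argue this by noting that $\sing(\F_\infty)$ meets only finitely many fibers while the tangencies occur on all non-invariant regular fibers, of which there are infinitely many.
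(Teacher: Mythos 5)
Your handling of $\gen(f)=0$ coincides with the paper's (Lemma~\ref{lem:new:22}(1) plus non-constancy of the pencil), and your $\gen(f)\geq 2$ argument is acceptable in spirit, using Lemma~\ref{lem:new:22}(3) where the paper uses its intersection computation; there you should still justify that infinitely many regular fibers are non-invariant by $\F_{\alpha}$ (an $\F_{\alpha}$-invariant regular fiber is contained in $\Delta(\Pc)$, which has only finitely many components, so only finitely many fibers can be invariant), but that is a one-line repair.

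The genuine gap is in the case $\gen(f)=1$, i.e.\ in item (1). From turbulence you correctly obtain $\tang(\F_{\alpha},F)=0$ for generic fibers $F$, but you then deduce $\Delta(\Pc)\subset\bigcup_{j=1}^{k}f^{-1}(c_j)$ by writing $\Delta(\Pc)=\bigcup_{\alpha}\sing(\F_{\alpha})$ as in \eqref{eq:deltapc} and noting that each $\sing(\F_{\alpha})$ is finite, ``with all but finitely many $\F_{\alpha}$ already covered''. This is a non sequitur: the union runs over uncountably many parameters and the finitely many fibers containing $\sing(\F_{\alpha})$ depend on $\alpha$; as $\alpha$ varies the singular points may a priori sweep out a \emph{horizontal} component of $\Delta(\Pc)$, one dominating $S$. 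Excluding such a component is precisely the content of item (1), so this step assumes what is to be proved. The paper excludes it by intersection theory: if $C\subset\Delta(\Pc)$ were horizontal, one picks a regular fiber $F$ meeting $C$, non-invariant by $\F_{\alpha}$ and avoiding the finite set $\sing(\F_{\alpha})$, so that $\tang(\F_{\alpha},F)>0$ as in \eqref{eq:tangt0}; on the other hand $F\cdot F-\tang(\F_{\alpha},F)=T_{\F_{\alpha}}\cdot F=T_{\F_{\infty}}\cdot F=\chi(F)-Z(\F_{\infty},F)=0$ and $F\cdot F=0$ force $\tang(\F_{\alpha},F)=0$, a contradiction. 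Alternatively you could have completed your own route: for a regular fiber $F$ with $\tang(\F_{\alpha},F)=0$ one actually has $F\cap\Delta(\Pc)=\emptyset$ (a point of $F\cap\Delta(\Pc)$ is either a singular point of $\F_{\alpha}$ on $F$ or a point where $\F_{\alpha}$ is tangent to the fiber, and both contribute to $\tang(\F_{\alpha},F)$), and since every compact component of $\Delta(\Pc)$ either lies in a single fiber or meets every fiber, all components are vertical. Finally, your parenthetical claim that each whole fiber $f^{-1}(c_j)$ is $\F_{\alpha}$-invariant for every $\alpha$ is neither needed nor correctly justified; what the final equivalence requires is that each component of $\Delta(\Pc)$ contained in a fiber is invariant for the pencil, and this follows because such a component is tangent to $\F_{\infty}$ and, lying in the tangency locus, is tangent to every $\F_{\alpha}$ at its generic points.
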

\begin{proof}
Without loss of generality we may assume that $0$ also belongs to $IS(\Pc)$. As $\F_0\neq \F_{\infty}$, by Lemma~\ref{lem:new:22}, we obtain that $\gen(f)\geq 1$.

Let $\alpha\in IS(\Pc)$, there exists a regular fiber $F$, non-invariant by $\F_{\alpha}$, such that $F\cap \Delta(\Pc)\neq \emptyset$ and $F\cap \sing(\F_{\alpha})=\emptyset$. This implies that
\begin{equation}\label{eq:tangt0}
\tang(\F_{\alpha},F)>0.
\end{equation}
Assume $\gen(f)=1$.  We claim that every component $C$ of $\Delta(\Pc)$ is contained in some fiber of $f$. Assume otherwise, so $C$ is a non-invariant component of $\Delta(\Pc)$.   On the other hand,
\[
F\cdot F-\tang(\F_{\alpha},F)=T_{\F_{\alpha}}\cdot F=T_{\F_{\infty}}\cdot F=\chi(F)-Z(\F_{\infty},F)=0,
\]
where the last equality follows since $\gen(F)=1$ and $F\cap \sing(\F_{\alpha})=\emptyset$.
Hence $\tang(\F_{\alpha},F)=0$, a contradiction with~\eqref{eq:tangt0}.  Therefore, any component of $\Delta(\Pc)$ is contained in some fiber of $f$, so there exist $c_1,\ldots,c_k\in S$ such that 
\[
\Delta(\Pc)\subset \bigcup_{j=1}^{k} f^{-1}(c_j).
\]

Now assume that $\gen(f)\geq 2$, and let $F$ be a generic regular fiber of $f$, non-invariant by $\F_{\alpha}$. 
If $\Delta(\Pc)$ is invariant, then there exist $c_1,\ldots,c_k\in S$ such that $\Delta(\Pc)\subset\ds\bigcup_{j=1}^kf^{-1}(c_j)$ and $F\cap\Delta(\Pc)=\emptyset$. In particular,
$\tang(\F_{\alpha},F)=0$, again a contradiction with~\eqref{eq:tangt0}. This implies item~(2).
\end{proof}

\begin{corollary}\label{teo:teomain}
Let $\Pc=\{\F_{\alpha}\}_{\alpha\in\Ce}$ be a pencil on a compact complex surface $X$ such that $\F_{\infty}$ has an holomorphic first integral $f:X\to S$. If $\infty\in IS(\Pc)$ then the following are equivalent:
\begin{enumerate}
 \item $\gen(f)=1$,
 \item $\Delta(\Pc)$ is invariant,
 \item there exist $c_1,\ldots,c_k\in S$ such that $\Delta(\Pc)\subset \ds\bigcup_{j=1}^{k} f^{-1}(c_j)$.
\end{enumerate}
In particular, in this case, $\Pc$ is transversal to $f$. 
\end{corollary}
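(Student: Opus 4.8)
The plan is to deduce this corollary directly from the Proposition immediately preceding it, since that Proposition already contains essentially all the content. First I would note that $\infty \in IS(\Pc)$ puts us in the hypotheses of that Proposition, which asserts both that $\gen(f)\geq 1$ and that ``$\Delta(\Pc)$ is invariant if and only if $\gen(f)=1$''. This last equivalence is precisely (1)$\Leftrightarrow$(2). For (1)$\Rightarrow$(3), I invoke item~(1) of the Proposition: if $\gen(f)=1$ then there exist $c_1,\ldots,c_k\in S$ with $\Delta(\Pc)\subset\bigcup_{j=1}^k f^{-1}(c_j)$. For (3)$\Rightarrow$(2), the containment $\Delta(\Pc)\subset\bigcup_{j=1}^k f^{-1}(c_j)$ forces every irreducible component $C$ of $\Delta(\Pc)$ to lie inside a single fiber $f^{-1}(c_j)$ (fibers of $f$ are the generic level curves, which we assume irreducible, and distinct fibers are disjoint; a component meeting two of them would be reducible or would project onto $S$); since $\F_\infty$ has $f$ as a first integral, each fiber $f^{-1}(c_j)$ is a union of leaves and singularities of $\F_\infty$, hence $\F_\infty$-invariant, and consequently each such $C$ is invariant by $\F_\infty$. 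To upgrade this to invariance by every $\F_\alpha$: if $C$ were non-invariant by some $\F_\alpha$ with $\alpha\in IS(\Pc)$, then picking a generic regular fiber $F$ of $f$ and using $T_{\F_\alpha}\cdot F = T_{\F_\infty}\cdot F = \chi(F) - Z(\F_\infty,F) = 0$ (as in the Proposition's proof, since $\gen(F)=1$ once (1) is known, and $F\cap\sing(\F_\alpha)=\emptyset$) gives $\tang(\F_\alpha,F)=0$, contradicting the existence of a regular fiber meeting $\Delta(\Pc)$; alternatively, and more cleanly, once we know $C\subset f^{-1}(c_j)$ and $C$ is $\F_\infty$-invariant, the same algebra shows $C$ must be $\F_\alpha$-invariant for all $\alpha\in IS(\Pc)$, and then by continuity (the equation of $C$ divides $\omega_i\wedge\eta_i$, so $C$ is tangent to the pencil) it is invariant for all $\F_\alpha$, $\alpha\in\Ce$. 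Hence $\Delta(\Pc)$ is invariant, which is (2).

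For the final assertion, once $\Delta(\Pc)$ is invariant and $\F_\infty$ has the holomorphic first integral $f$, Lemma~\ref{lem:finally} immediately gives that $\Pc$ is transversal to $f$. This is the entirety of the ``in particular'' clause.

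I do not expect any serious obstacle here: the only point requiring a line of care is the implication (3)$\Rightarrow$(2), where one must argue that being contained in finitely many fibers actually forces pencil-invariance and not merely $\F_\infty$-invariance. The cleanest route is to observe that, since (3) combined with the Proposition (whose conclusion ``$\Delta(\Pc)$ invariant iff $\gen(f)=1$'' is symmetric) already yields $\gen(f)=1$ via item~(2) of the Proposition — indeed (3)$\Rightarrow$ no generic fiber meets $\Delta(\Pc)$ $\Rightarrow$ (by the contrapositive of the $\gen(f)\geq 2$ case) we cannot have $\gen(f)\geq 2$, so $\gen(f)=1$ — we are then in case (1)$\Leftrightarrow$(2) and done. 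So the genuinely new work is nil; the corollary is a bookkeeping consequence of the Proposition, and I would keep the proof to three or four lines:

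\begin{proof}
Since $\infty\in IS(\Pc)$, the previous Proposition applies and gives $\gen(f)\geq 1$ together with the equivalence of (1) and (2). The implication (1)$\Rightarrow$(3) is item~(1) of that Proposition. Conversely, assume (3). Then no generic regular fiber of $f$ meets $\Delta(\Pc)$; if $\gen(f)\geq 2$ held, item~(2) of the Proposition would force $\Delta(\Pc)$ to have a component not contained in any fiber, contradicting (3). Hence $\gen(f)=1$, which is (1). This proves the equivalence of (1), (2) and (3). Finally, when $\Delta(\Pc)$ is invariant, Lemma~\ref{lem:finally} shows that $\Pc$ is transversal to $f$.
\end{proof}
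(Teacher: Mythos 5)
Your proposal is correct and follows exactly the route the paper intends: the corollary is stated without its own proof, being an immediate bookkeeping consequence of the preceding Proposition (its item (1), its item (2), and the ``in particular'' equivalence) together with Lemma~\ref{lem:finally} for the transversality claim, which is precisely how you argue. Your only extra care point --- that a component of $\Delta(\Pc)$ contained in a fibre is automatically invariant for the whole pencil, so ``non-invariant'' in item (2) really does mean ``not contained in any fibre'' --- is correctly justified by your tangency-locus observation, so no gap remains.
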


\begin{lemma}\label{lemagen01}
Let $\Pc=\{\F_{\alpha}\}_{\alpha\in\Ce}$ be a pencil on a compact complex surface $X$ such that $\F_{\infty}$ has an holomorphic first integral $f:X\to S$ and $\Delta(\Pc)$ is invariant.
Let $\alpha\in IS(\Pc)$ such that $Z(\F_{\alpha},C)\geq 1$, for all components $C$ of $\Delta(\Pc)$, $\F_{\alpha}$ has a meromorphic local first integral in every singularity, and $G_{\alpha}$ is finite. Then $\alpha\in I_p(\Pc)$.
\end{lemma}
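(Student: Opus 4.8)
The plan is to combine the holonomy description from Section~\ref{sec:holonomy} with a standard suspension/construction of a first integral. Since $\Delta(\Pc)$ is invariant, Lemma~\ref{lem:finally} gives that $\Pc$ is transversal to $f$, so for $\alpha\in IS(\Pc)$ we have the global holonomy group $G_\alpha\subset\diff(T_c)$ acting on a generic fiber $T_c$, with the property that the intersection of each leaf of $\F_\alpha|_U$ with $T_c$ is a single orbit of $G_\alpha$. The key point is that a foliation transversal to the fibers of a fibration has a (holomorphic) first integral on the total space of the fibration over $W$ precisely when its global holonomy group has all orbits finite and, moreover, the quotient is nice; finiteness of $G_\alpha$ gives exactly this. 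So first I would show that $\F_\alpha|_U$ has an holomorphic first integral $g:U\to S'$ for some Riemann surface $S'$: the orbit space $T_c/G_\alpha$ is a compact Riemann surface (since $G_\alpha$ is a finite group of automorphisms of the compact curve $T_c$), the map sending $p\in U$ to the $G_\alpha$-orbit of the point where the leaf through $p$ meets $T_c$ is well defined and holomorphic by the holonomy construction, and its generic fibers are unions of leaves of $\F_\alpha$. Concretely, the suspension of the representation $\Hol_\alpha$ gives a bimeromorphic model, but here one just needs the quotient map directly.

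The second step is to extend this first integral across $f^{-1}(\{c_1,\dots,c_k\})$, i.e.\ over the finitely many ``bad'' fibers, to get a first integral on all of $X$. This is where the two extra hypotheses are used. The hypothesis that $\F_\alpha$ has a meromorphic local first integral at each of its singularities handles the local extension near $\sing(\F_\alpha)$: the locally defined $g$ from the previous step, being constant on leaves, agrees up to composition with a holomorphic map with the local first integral, so it extends meromorphically across each singular point. The hypothesis $Z(\F_\alpha,C)\geq 1$ for every component $C$ of $\Delta(\Pc)$ is what lets $g$ extend across the invariant curves $C$ contained in the bad fibers: positivity of the GSV index forces $C$ to be a level set of the (meromorphic) extension rather than a polar/indeterminacy locus, so the extension is not destroyed along $\Delta(\Pc)$. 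Combining these, $g$ extends to a meromorphic map $X\dashrightarrow S'$; since $X$ is compact and $g$ is constant on the leaves of $\F_\alpha$, after resolving indeterminacies and using that the generic fiber was already connected one obtains a genuine meromorphic (hence, since we may take Stein factorization, holomorphic onto a possibly different target) first integral of $\F_\alpha$. Either way $\F_\alpha$ admits a meromorphic first integral on $X$, so $\alpha\in I_p(\Pc)$.

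The main obstacle I expect is the extension across the finitely many fibers $f^{-1}(c_j)$ and, in particular, controlling the behavior of $g$ along the invariant components $C\subset\sing(\F_\infty)$ of $\Delta(\Pc)$ that are not contained in a full fiber; this is precisely the role of the hypothesis $Z(\F_\alpha,C)\geq 1$, and making the GSV-index bookkeeping into a clean local statement (``$g$ has neither a pole nor an essential obstruction along $C$'') is the delicate part. The local-first-integral hypothesis at singularities is comparatively routine — it is a standard Mattei--Moussu type local extension — and the passage from ``meromorphic first integral defined off a codimension-one set'' to ``meromorphic first integral on $X$'' is handled by normality of $X$ (Riemann-type extension) together with the a priori bound that the generic level curve is connected.
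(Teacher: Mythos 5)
Your first step (the quotient map $p\mapsto$ the $G_{\alpha}$-orbit of $L_{\alpha}(p)\cap T_c$, giving a holomorphic first integral of $\F_{\alpha}$ on $U=X\setminus f^{-1}(\{c_1,\ldots,c_k\})$ with values in the compact Riemann surface $T_c/G_{\alpha}$) is fine, but the proof breaks down exactly where you yourself flag the difficulty: the extension of $g$ across the invariant components of $\Delta(\Pc)$. Your stated mechanism --- that $Z(\F_{\alpha},C)\geq 1$ ``forces $C$ to be a level set of the extension rather than a polar/indeterminacy locus'' --- is not a valid implication. All that $Z(\F_{\alpha},C)\geq 1$ gives is $C\cap\sing(\F_{\alpha})\neq\emptyset$ (the GSV index is a sum over $\sing(\F_{\alpha})\cap C$, so it vanishes when this set is empty); it says nothing directly about the behaviour of $g$ at smooth points of $C$. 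A map into a compact curve can perfectly well have essential-singularity-type behaviour along a divisor (leaves spiralling onto the invariant curve), and ruling this out requires controlling the leaves of $\F_{\alpha}$ near $C\setminus\sing(\F_{\alpha})$, e.g.\ via the holonomy of the invariant curve $C$. That control must be \emph{propagated} from the singular point $p\in C$ supplied by the GSV hypothesis, using the assumed local meromorphic first integral at $p$ (which makes nearby leaves locally closed) together with connectedness of $C\setminus\sing(\F_{\alpha})$; none of this is in your sketch, and the hypothesis on local first integrals is doing far more work than the ``routine Mattei--Moussu extension'' role you assign it.

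For comparison, the paper avoids constructing and extending $g$ altogether. It argues that every leaf $L$ of $\F_{\alpha}$ not contained in $\Delta(\Pc)$ satisfies $\ovl{L}\setminus L\subset\sing(\F_{\alpha})$: finiteness of $G_{\alpha}$ forces any accumulation point of $L$ into a component $C$ of $\Delta(\Pc)$; $Z(\F_{\alpha},C)\geq 1$ provides a singular point $p\in C$; and the local meromorphic first integral at $p$ then excludes accumulation at non-singular points of $C$. Hence $\F_{\alpha}$ has infinitely many compact invariant curves, and the conclusion follows from the Jouanolou--Ghys theorem (the reference [E2]): a foliation on a compact surface with infinitely many compact leaves admits a meromorphic first integral. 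If you want to salvage your direct-construction route, you would need to supply precisely this kind of argument (or a finite-holonomy statement along each $C$) at the extension step; as written, the step is a genuine gap, not a bookkeeping issue.
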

\begin{proof}
By \cite{E2}, to prove that $\alpha\in I_p(\Pc)$, it is enough to prove that $\F_{\alpha}$ has infinitely many compact invariant curves.  
Let $L$ be a leaf of $\F_{\alpha}$ not contained in $\Delta(\Pc)$. We assert that $\ovl{L}\setminus L\subset \sing(\F_{\alpha})$. Take $x\in \ovl{L}\setminus L$. 
Since $G_{\alpha}$ is finite, there exists a component $C$ of $\Delta(\Pc)$ such that $x\in C$. Now $Z(\F_{\alpha},C)\geq 1$ implies that $C\cap\sing(\F_{\alpha})\neq\emptyset$. Let $p\in C\cap\sing(\F_{\alpha})$, then, using our hypotheses, $\F_{\alpha}$ has a meromorphic first integral defined on a neighborhood $U$ of $p$. We may assume, without loss of generality, that $x\in U$, hence $x\in \sing(\F_{\alpha})$.
\end{proof}

\begin{proposition}\label{pro:new54}
Let $\Pc=\{\F_{\alpha}\}_{\alpha\in\Ce}$ be a pencil on a compact complex surface $X$ such that $\F_{\infty}$ has an holomorphic first integral $f:X\to S$. If $\Delta(\Pc)$ is invariant then $\gen(f)\leq 1$.
Moreover, let $\alpha\in IS(\Pc)$, $\alpha\neq \infty$.
\begin{enumerate}
 \item If $\gen(f)=0$ then $\infty\notin IS(\Pc)$ and $\F_{\alpha}$ is a Riccati foliation with respect to $f$.
 \item If $\gen(f)=1$ then $\F_{\alpha}$ is a turbulent foliation relative to $f$.
\end{enumerate}
\end{proposition}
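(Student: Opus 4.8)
The plan is to get the genus bound $\gen(f)\le 1$ from the effectivity of the tangency divisor, using the transversality supplied by Lemma~\ref{lem:finally}, and then to read off the two cases from the definitions of Riccati and turbulent foliations.

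First I would regenerate the pencil by two \emph{finite} parameters: since $NI(\Pc)$ is finite, pick distinct $\alpha,\beta\in IS(\Pc)\setminus\{\infty\}$, so that $\Pc=\Pc(\F_{\alpha},\F_{\beta})$, $N_{\F_{\alpha}}=N_{\F_{\beta}}$, and $\Og([\Delta(\Pc)])=T_{\F_{\alpha}}^*\otimes N_{\F_{\alpha}}$ by~\eqref{eq1.1}. (Staying with finite parameters keeps every bundle identity in the realm of foliations with isolated singularities, sidestepping the fact that $\F_{\infty}$ may have non-isolated singularities.) Since $\Delta(\Pc)$ is invariant, Lemma~\ref{lem:finally} gives that $\Pc$ is transversal to $f$, so a generic fiber $F$ of $f$ misses $\sing(\F_{\alpha})$, is not a component of $\Delta(\Pc)$, and has $\tang(\F_{\alpha},F)=0$.

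Then I would apply the intersection formulas on $F$. Since $F\cdot F=0$ and $\tang(\F_{\alpha},F)=0$, one gets $T_{\F_{\alpha}}\cdot F=0$ and $N_{\F_{\alpha}}\cdot F=\chi(F)=2-2\gen(f)$, hence $[\Delta(\Pc)]\cdot F=-T_{\F_{\alpha}}\cdot F+N_{\F_{\alpha}}\cdot F=2-2\gen(f)$. As $[\Delta(\Pc)]$ is effective and $F$ is not contained in its support, $[\Delta(\Pc)]\cdot F\ge 0$, so $\gen(f)\le 1$. For the ``moreover'' part: if $\gen(f)=0$ then, recalling that $\infty\in IS(\Pc)$ would force $\gen(f)\ge 1$ (the Proposition preceding Corollary~\ref{teo:teomain}), we must have $\infty\notin IS(\Pc)$; moreover $f$ is then a rational fibration and, by Lemma~\ref{lem:finally}, its generic fibers are transversal to $\F_{\alpha}$ for every $\alpha\in IS(\Pc)\setminus\{\infty\}$, so by definition $\F_{\alpha}$ is a Riccati foliation with respect to $f$. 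If instead $\gen(f)=1$ then $f$ is an elliptic fibration with generic fibers transversal to $\F_{\alpha}$, so $\F_{\alpha}$ is turbulent relative to $f$.

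The computations here are routine, so I expect the only delicate point to be the transversality input $\tang(\F_{\alpha},F)=0$ on a generic fiber. This is precisely where invariance of $\Delta(\Pc)$ enters: every component of $\Delta(\Pc)$ is $\F_{\alpha}$-invariant, so at a point where a transversal fiber crosses such a component the fiber is automatically transversal to $\F_{\alpha}$ and contributes nothing to the tangency index; this is exactly the content of Lemma~\ref{lem:finally}, so once that lemma is available the rest is formal bookkeeping.
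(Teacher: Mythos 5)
Your proposal is correct, but it reaches the key inequality $\gen(f)\leq 1$ by a genuinely different route than the paper. The paper argues by contradiction through holonomy: if $\gen(f)\geq 2$, the global holonomy of each $\F_{\alpha}$, $\alpha\in IS(\Pc)\setminus\{\infty\}$, takes values in $\aut(T_c)$, which is finite for a curve of genus at least two, so the holomorphic family $\alpha\mapsto\Hol_{\alpha}(\gamma)$ is constant in $\alpha$; hence all global holonomy groups coincide, and the rigidity argument of \cite{LN3} (p.~34) then gives $\F_{\alpha}=\F_{\beta}$ for all $\alpha$ in a neighborhood of $\beta$, contradicting the non-triviality of the pencil. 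You instead regenerate the pencil by two parameters in $IS(\Pc)\setminus\{\infty\}$ (a sensible precaution, since $\F_{\infty}$ need not have isolated singularities), use \eqref{eq1.1} together with $N_{\F_{\alpha}}=N_{\F_{\beta}}$ to write $\Og([\Delta(\Pc)])=T_{\F_{\alpha}}^*\otimes N_{\F_{\alpha}}$, and then combine the transversality from Lemma~\ref{lem:finally} with the index formulas on a smooth generic fiber to obtain $[\Delta(\Pc)]\cdot F=2-2\gen(f)$, concluding by effectivity of the tangency divisor. Both proofs use the invariance hypothesis only through Lemma~\ref{lem:finally}; your computation is more elementary and self-contained (no holonomy machinery, no appeal to the external argument of Lins Neto) and yields the sharper numerical identity $[\Delta(\Pc)]\cdot F=2-2\gen(f)$, which foreshadows the multiplicity dichotomy exploited later in Theorem~\ref{teo:new:51}; the paper's argument, in exchange, avoids the divisor-theoretic bookkeeping (effectivity, smoothness of the fiber for the adjunction-type formula) and reuses the holonomy setup it develops anyway. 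Your handling of the ``moreover'' items agrees with the paper's (transversality plus the definitions of Riccati and turbulent foliations), and you rightly make explicit what the paper leaves implicit, namely that $\infty\notin IS(\Pc)$ when $\gen(f)=0$ follows from the proposition preceding Corollary~\ref{teo:teomain}.
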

\begin{proof}
On the contrary, assume that $\gen(f)\geq 2$. Since $\Delta(\Pc)$ is invariant, $\Pc$ is transversal to $f$. 
From Section~\ref{sec:holonomy}, given $\gamma\in\Pi_1(W,c)$, we obtain the holomorphic map 
\[
F_{\gamma}:IS(\Pc)\setminus\{\infty\}\times T_c\to T_c,\qquad F_{\gamma}(\alpha,p)=\Hol_{\alpha}(\gamma)(p),
\]
where $\Hol_{\alpha}$ is the holonomy representation.
Note that $F_{\gamma}(\alpha,\cdot)$ does not depend on $\alpha$, as $\aut(T_c)$ is finite. 
In particular, if $\alpha,\beta\in IS(\Pc)\setminus\{\infty\}$ then their global holonomy groups coincide. 
Using the same argument as \cite[p. 34]{LN3} we obtain a neighborhood $V$ of $\beta$ such that $\F_{\alpha}=\F_{\beta}$, for all $\alpha\in V$. This is a contradiction.

Finally, items (1) and (2) follow from the fact that $\Pc$ is transversal to $f$.
\end{proof}

From now on $\Pc=\{\F_{\infty}\}_{\alpha\in\Ce}$ is a flat pencil on a compact complex surface $X$ such that $\F_{\infty}$ has an holomorphic first integral $f:X\to S$ and $\Delta(\Pc)$ is invariant and non-empty.
From Proposition~\ref{pro:new54}, $\gen(f)\leq 1$. Theorem~\ref{teo:linsneto} deals with the case when $\gen(f)=1$. We now consider the case when $\gen(f)=0$.  Under this condition, the following proposition provides an explicit form of the elements of the global holonomy group.

\begin{theorem}\label{teo:new:51}
Let $\Pc=\{\F_{\alpha}\}_{\alpha\in\Ce}$ be a flat pencil on a compact complex surface $X$ such that $\F_{\infty}$ has an holomorphic first integral $f:X\to \P^1$ and $\Delta(\Pc)$ is invariant. If $\gen(f)=0$ then, for any $\alpha\in IS(\Pc)$, the global holonomy group $G_{\alpha}$ of $\F_{\alpha}$ is finitely generated by $f_{1,\alpha},\ldots,f_{k,\alpha}$, where these generators could be either
\[
f_{j,\alpha}(z)=\lambda_jz+a_j\alpha +b_j
,\qquad \forall\,j=1,\ldots,k,
\]
or
\[
f_{j,\alpha}(z)=\exp(2\pi i(\mu_j\alpha+\nu_j))z,\qquad \forall\,j=1,\ldots,k.
\]
\end{theorem}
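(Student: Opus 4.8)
The plan is to exploit the fact that for $\gen(f)=0$ the generic fiber $T_c$ is $\P^1$, so every element of $G_\alpha$ is an automorphism of $\P^1$, i.e.\ a Möbius transformation. First I would set up, exactly as in Section~\ref{sec:holonomy}, the transversal fibration structure: since $\Delta(\Pc)$ is invariant, Lemma~\ref{lem:finally} gives that $\Pc$ is transversal to $f$, so on $U=X\setminus f^{-1}(\{c_1,\ldots,c_k\})$ and $W=S\setminus\{c_1,\ldots,c_k\}=\P^1\setminus\{c_1,\ldots,c_k\}$ we have the holonomy representation $\Hol_\alpha:\Pi_1(W,c)\to\aut(T_c)=\mathrm{PGL}(2,\C)$ for each $\alpha\in IS(\Pc)\setminus\{\infty\}$. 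Because $\Pi_1(W,c)$ is a free group on $k-1$ generators $\gamma_1,\ldots,\gamma_{k-1}$ (the loops around the removed points, with one relation if we include the loop at infinity — in any case finitely generated), the group $G_\alpha$ is generated by the $k$ Möbius maps $f_{j,\alpha}:=\Hol_\alpha(\gamma_j)$. The core task is then to pin down the $\alpha$-dependence of each $f_{j,\alpha}$.

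For this I would use the local normal form~\eqref{eq:10}: fixing $\beta\in IS(\Pc)$ and lifting a loop $\gamma_j$ to $\widetilde{\gamma}_{j,\beta}$, Lemma~\ref{17} provides a chain of charts $(X_n,Y_n)$ in which $\F_\beta$ is $dY_n+\beta dX_n=0$ and, by the compatibility clause of Lemma~\ref{17}, the transition maps between consecutive charts are \emph{affine}: $X_{n+1}=\lambda_n X_n + (\text{const})$, $Y_{n+1}=\lambda_n Y_n+(\text{const})$. Tracking the holonomy along the chain, formula~\eqref{eq:new:12} already records that on a cross-section the holonomy takes the affine form $z\mapsto \lambda z + a\alpha + b$ for $\alpha$ in a disk $D_\eps(\beta)$; so locally in $\alpha$ every generator is affine with a coefficient of $\alpha$ that is constant in $\alpha$. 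The point is to patch these local descriptions into a global one on all of $IS(\Pc)\setminus\{\infty\}$. Here I would argue that $\alpha\mapsto f_{j,\alpha}$ extends to a holomorphic map from $\Ce\setminus NI(\Pc)$ (a Riemann surface, in fact $\Ce$ minus finitely many points) into $\mathrm{PGL}(2,\C)$; being locally affine of the shape $\lambda z+a\alpha+b$, this global map is a holomorphic family of Möbius transformations whose "affine part" varies holomorphically and whose $\alpha$-coefficient is locally constant, hence globally constant on each connected piece. A holomorphic $\mathrm{PGL}(2,\C)$-valued map on $\P^1$ minus finitely many points that is \emph{affine} near one point must, by analytic continuation and by the structure of one-parameter families, be either globally of the form $z\mapsto\lambda_j z+a_j\alpha+b_j$ (when the fixed point at infinity persists) or, after conjugating so that the common fixed point is at $0$, of the multiplicative form $z\mapsto \rho_j(\alpha)z$; in the latter case the local affine-in-$\alpha$ behaviour of the logarithm of the multiplier forces $\rho_j(\alpha)=\exp(2\pi i(\mu_j\alpha+\nu_j))$. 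Finally, I would note that these two shapes cannot be mixed across generators: since the global holonomy group is generated simultaneously and the charts are glued consistently, either every generator fixes the common point $\infty$ (affine case) or, after a single global conjugation, every generator is diagonal (exponential case).

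The main obstacle I anticipate is precisely this last dichotomy and the passage from the local statement~\eqref{eq:new:12} (valid only on $D_\eps(\beta)$) to a \emph{global-in-$\alpha$} normal form that is uniform over all generators. The subtlety is that a priori each $f_{j,\alpha}$ could be affine on overlapping disks with \emph{different} fixed points, and one must rule out that the common fixed point jumps; the flatness of $\Pc$ (which underlies Lemma~\ref{17} and the affine transition maps) together with the fact that $G_\alpha$ is a group of Möbius transformations varying holomorphically in $\alpha$ should force a common fixed point or common pair of fixed points. I would handle this by considering the fixed-point locus of $G_\alpha$: for a nonabelian group of Möbius maps it is at most two points and varies holomorphically, while for $\gen(f)=0$ the transversality together with invariance of $\Delta(\Pc)$ forces the removed fibers to behave like invariant fibers of a Riccati foliation (Proposition~\ref{pro:new54}(1) confirms $\F_\alpha$ is Riccati relative to $f$), which is exactly the setting where the monodromy group is conjugate to a subgroup of the affine group or of the multiplicative group. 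Invoking the Riccati structure from Proposition~\ref{pro:new54}(1) is, I expect, the cleanest route: a Riccati equation over $\P^1$ has monodromy in $\mathrm{PGL}(2,\C)$, and the one-parameter pencil structure makes the monodromy matrices depend on $\alpha$ through a linear exponent, yielding the two stated forms.
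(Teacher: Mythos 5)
You have the right setting (transversality from Lemma~\ref{lem:finally}, the Riccati structure as in Proposition~\ref{pro:new54}, holonomy in $\aut(\P^1)$, flatness entering through Lemma~\ref{17}), and you correctly flag the main difficulty, but your proposal does not actually close it. The gap is that equation~\eqref{eq:new:12} gives affinity of the holonomy only in the flat-structure coordinates $Y_1,Y_m$ supplied by Lemma~\ref{17}; these are local charts adapted to the pencil, not a global coordinate on the fiber $T_c\simeq\P^1$. Hence ``locally affine of the shape $\lambda z+a\alpha+b$, with $\alpha$-coefficient locally constant, hence globally constant'' is not a valid reading: in the multiplicative case the holonomy is affine in the flat coordinate (which is essentially a logarithm of the fiber coordinate) but not in any global affine coordinate $z$ on $\P^1$, and its dependence on $\alpha$ is exponential, not affine. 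Consequently the analytic-continuation and fixed-point-locus argument you sketch never determines which of the two shapes occurs, why the $\alpha$-dependence is exactly $\lambda_jz+a_j\alpha+b_j$ or $\exp(2\pi i(\mu_j\alpha+\nu_j))z$, or why the same shape holds simultaneously for all generators; you defer this to ``the one-parameter pencil structure makes the monodromy depend on $\alpha$ through a linear exponent,'' which is precisely the statement to be proved.

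What the paper does at this point is a concrete computation that your proposal omits. Writing $\F_{\alpha}$ near an invariant fiber in Riccati form $\omega_{\alpha}=dy+\alpha P(x,y)dx$ with $\deg_y P\le 2$ (equation~\eqref{eq:new:pxy}), flatness forces the coefficients $a_i(x)$ to be proportional to a single function, so $P(x,y)=a(x)p_2(y)$ with $p_2$ a constant-coefficient polynomial of degree at most two. The dichotomy is then read off from the roots of $p_2$, equivalently from the components of $\Delta(\Pc)$ contained in $\sing(\F_{\infty})$: a double root (one component of multiplicity $2$, normalized to $\{y=\infty\}$) yields the closed form $d\bigl(y+\alpha\lambda\int a\bigr)$, hence flat coordinates $Y_n=y+\mathrm{const}$ and affine generators $\lambda_jz+a_j\alpha+b_j$; two simple roots ($\{y=0\}$ and $\{y=\infty\}$) yield $dy/y+\alpha a(x)dx$, hence multiplicative generators $\exp(2\pi i(\mu_j\alpha+\nu_j))z$; and the uniformity over $j$ follows from the invariance of $\Delta(\Pc)$ together with the Riccati structure along all the singular fibers. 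Without this identification of the flat coordinates with an affine coordinate on, or the logarithm of, the fiber coordinate, your argument cannot produce the explicit $\alpha$-dependence claimed in the statement.
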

\begin{proof}
We will follow the notations used in Section~\ref{sec:holonomy}.
Since $\Delta(\Pc)$ is invariant, $\Pc$ is transversal to $f$ and, as $\gen(f)=0$, $\F_{\alpha}$ is a Riccati foliation, for all $\alpha\in IS(\Pc)$. In addition, since $\infty\notin IS(\Pc)$, there exists a component of $\Delta(\Pc)$ contained in $\sing(\F_{\infty})$ and 
\[
\Delta(\Pc)=\sum_{j=1}^kn_j[f^{-1}(c_j)]+\sum_{s=1}^lC_s',
\]
for $j=1,\ldots,k$, and $C_s'\subset\sing(\F_{\infty})$, for $s=1,\ldots,l$.

Now assume, without loss of generality, that $0\in IS(\Pc)$. From now on, for each $c\in W$, with $W$ as in Section~\ref{sec:holonomy}, $T_c$ will denote the level curve $f^{-1}(c)$.
Fix $j\in\{1,\ldots,k\}$, then there exists a neighborhood $D_j\subset W$ of $c_j$ such that 
\[
f^{-1}(D_j)\simeq D_j\times T_{c_j}\ni (x,y),
\]
where $T_{c_j}\simeq \P^1$. Moreover, since $\F_{0}$ is transversal to $f$, we may assume that the projections $\pi_1:D_j\times T_{c_j}\to D_j$ and $\pi_2:D_j\times T_{c_j}\to T_{c_j}$ respectively define the foliations $\F_{\infty}$ and $\F_0$ on $D_j\times T_{c_j}$. In particular, $dx=0$ and $dy=0$ respectively represent $\F_{\infty}$ and $\F_{0}$ on $D_j\times T_{c_j}$.

We now look for an explicit expression for the generators of $G_{\alpha}$. For this, fix $c\in W$. We first consider the case when $c\in D_j$. Recall that $f_{j,\alpha}=\Hol_{\alpha}(\gamma)$, for some curve $\gamma\in\Pi_1(W,c)$. Now $c\in D_j$ implies that $f_{j,\alpha}$ coincides with the holonomy of $\F_{\alpha}$ around $T_{c_j}$.
Since $\F_{\alpha}$ is a Riccati foliation, 
from the construction done in Section~\ref{sec:holonomy},
there exists an open covering $\{U_n\}_{n=1}^m$ of $\widetilde{\gamma}([0,1])$, where $\widetilde{\gamma}$ is a lifting of $\gamma$, such that $D_1,D_m\subset D$, $U_n\simeq D_n\times\P^1$, $D_n$ is holomorphic to a disc in $\C$, and $\F_{\alpha}|_{U_n}$ is given by 
\begin{equation}\label{eq:new:pxy}
\omega_{\alpha}=dy+\alpha P(x,y)dx,
\end{equation}
where $P(x,y)=\ds\sum_{i=0}^r a_i(x)y^i$, $r\leq 2$ and $a_i(x)$ holomorphic, for $i=0,\ldots,r$.

From equation~\eqref{eq:new:pxy} we obtain $d\omega_{\alpha}=\theta\wedge\omega_{\alpha}$, with $\theta=\dfrac{P_y}{P}dy$, for all $\alpha\in\Ce$. Since $\Pc$ is flat, there exist $(\nu_0,\nu_1,\nu_2)\in\C^3\setminus\{0\}$ and a holomorphic $a(x)$ such that $a_i(x)=\nu_i a(x)$, $i=0,\ldots,r$. This in turn implies that $P(x,y)=a(x)p_2(y)$, where $p_2(y)$ is a polynomial with degree at most two. Moreover, $d\omega_{\alpha}=\theta\wedge\omega_{\alpha}$, with $\theta=\dfrac{p_2'(y)}{p_2(y)}dy$, for all $\alpha\in\Ce$. Thus, after a change of coordinates in the fiber, we have two possibilities, either $C_1=\{y=\infty\}\subset\sing(\F_{\infty})$ is a component of $\Delta(\Pc)$ with multiplicity 2, or $C_1=\{y=0\}$ and $C_2=\{y=\infty\}$ are invariant components of $\Delta(\Pc)$ with multiplicity 1, and $C_1,C_2\subset \sing(\F_{\infty})$. We now address these two cases separately:
\begin{enumerate}
 \item If $C_1=\{y=\infty\}$ is a component of $\Delta(\Pc)$ with multiplicity 2 then $p_2(y)=\lambda$, for some $\lambda\neq 0$. In addition, 
 \[
 \omega_0+\alpha \omega_{\infty}=d\left(y+\alpha\lambda\int_{x_0}^x a(t)dt\right)=d(Y_n+\alpha X_n),
 \]
 where 
 \begin{equation}\label{eq:new:17}
 Y_n(x,y)=y+\text{constant},
\end{equation}
for all $n=1,\ldots,m$. Now, from equation~\eqref{eq:new:12},
\[
f_{j,\alpha}|_{\Sigma_0}(q_0)=Y_m^{-1}(\lambda Y_1(q_0)+a\alpha+b)
\]
and from equation~\eqref{eq:new:17}, we obtain
\[
f_{j,\alpha}(z)=\lambda_jz+a_j\alpha +b_j.
\]
\item If $C_1=\{y=0\}$ and $C_2=\{y=\infty\}$ are invariant components of $\Delta(\Pc)$ with multiplicity 1, then $p_2(y)=y$. In addition,
\[
\dfrac{\omega_0+\alpha\omega_{\infty}}{y}=\dfrac{dy}{y}+\alpha a(x)dx=d(Y_n+\alpha X_n),
\]
where $Y_n|_{\Sigma_n}(y)=\exp(2\pi i(\mu_j\alpha+\nu_j))y$, for $n=1,\ldots,m$. This implies $f_{j,\alpha}(z)=\exp(2\pi i(\mu_j\alpha+\nu_j))z$.
\end{enumerate}

On the other hand, when $c\notin D_j$, $f_{j,\alpha}$ is conjugated to one of the expressions obtained on items (1) and (2). Therefore, since $\Delta(\Pc)$ is invariant and $\F_{\alpha}$ is a Riccati foliation, 
\[
f_{j,\alpha}(z)=\lambda_jz+a_j\alpha +b_j
,\qquad \forall\,j=1,\ldots,k,
\]
or
\[
f_{j,\alpha}(z)=\exp(2\pi i(\mu_j\alpha+\nu_j))z,\qquad \forall\,j=1,\ldots,k.
\]
\end{proof}

\begin{lemma}\label{lemagen02}
Let $\Pc=\{\F_{\alpha}\}_{\alpha\in\Ce}$ be a  pencil on a compact complex surface $X$ such that $\F_{\infty}$ has an holomorphic first integral $f:X\to \P^1$ and $\Delta(\Pc)$ is invariant. Assume also that, for all $\alpha\in IS(\Pc)$, the global holonomy group of $\F_{\alpha}$ is given by 
\[
G_{\alpha}=\langle f_{j,\alpha}\rangle,\text{ with } 
f_{j,\alpha}(z)=\lambda_jz+a_j\alpha +b_j,\quad  j=1,\ldots, k.
\]
If $\#(I_p(\Pc)\cap IS(\Pc))\geq 2$
then $G_{\alpha}$ is finite, for all $\alpha \in IS(\Pc)$.
\end{lemma}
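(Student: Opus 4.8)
The plan is to exploit that the holonomy representations of all the foliations $\F_{\alpha}$ of the pencil are built from the \emph{same} loops on the base $W=\P^1\setminus\{c_1,\dots,c_k\}$; consequently every holonomy map depends on $\alpha$ in an affine way, with $\alpha$-independent coefficients, and the assumption that two parameters already have finite holonomy will pin down this dependence completely.

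First I would place the discussion in the setting of Section~\ref{sec:holonomy}. Since $\Delta(\Pc)$ is invariant, Lemma~\ref{lem:finally} gives that $\Pc$ is transversal to $f$, so for every $\alpha\in IS(\Pc)$ there is a holonomy representation $\Hol_{\alpha}\colon\Pi_1(W,c)\to\aut(T_c)$ with image $G_{\alpha}$; recall that $\gen(f)=0$, so $T_c\simeq\P^1$, and by hypothesis the generators $f_{j,\alpha}(z)=\lambda_j z+a_j\alpha+b_j$ fix $\infty\in T_c$, so each $G_{\alpha}$ is a subgroup of $\aut(\C)$. Next I would record the elementary fact that $\alpha\in I_p(\Pc)\cap IS(\Pc)$ forces $G_{\alpha}$ finite: a first integral of $\F_{\alpha}$ places a generic leaf inside a compact curve $\bar L\subset X$, on which $f$ restricts to a finite holomorphic map onto $\P^1$, so the generic $G_{\alpha}$-orbit in $T_c$ is finite; now a subgroup of $\aut(\C)$ with a finite orbit of cardinality $\ge 2$ embeds in the corresponding symmetric group (an affine map fixing two points is the identity, so the permutation action is faithful), while if every generic orbit is a singleton the group is trivial. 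Either way $G_{\alpha}$ is finite, so, fixing $\alpha_1\neq\alpha_2$ in $I_p(\Pc)\cap IS(\Pc)$, both $G_{\alpha_1}$ and $G_{\alpha_2}$ are finite.

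The core step is the following rigidity. Choosing generators $\gamma_1,\dots,\gamma_k$ of $\Pi_1(W,c)$ (loops around the punctures $c_j$), which do not vary with $\alpha$, and using $\Hol_{\alpha}(\gamma_j)=f_{j,\alpha}$, an induction on word length shows that for every $[\gamma]\in\Pi_1(W,c)$ one has $\Hol_{\alpha}(\gamma)(z)=\Lambda_{\gamma}z+(A_{\gamma}\alpha+B_{\gamma})$, where $\Lambda_{\gamma}\in\C^*$ and $A_{\gamma},B_{\gamma}\in\C$ depend only on $\gamma$: composing or inverting affine maps whose linear part is $\alpha$-independent and whose translation part is affine in $\alpha$ preserves both properties. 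Hence $\rho\colon[\gamma]\mapsto\Lambda_{\gamma}$ is an $\alpha$-independent homomorphism $\Pi_1(W,c)\to\C^*$ with image $\langle\lambda_1,\dots,\lambda_k\rangle$, and this image is finite because $G_{\alpha_1}$ is. Put $N=\ker\rho$; for $\gamma\in N$ the element $\Hol_{\alpha}(\gamma)$ is the translation $z\mapsto z+(A_{\gamma}\alpha+B_{\gamma})$. As $G_{\alpha_1}$ is finite it contains no nontrivial translation (the translations in a finite subgroup of $\aut(\C)$ form a finite, hence trivial, subgroup of $(\C,+)$), so $A_{\gamma}\alpha_1+B_{\gamma}=0$; likewise $A_{\gamma}\alpha_2+B_{\gamma}=0$, and since $\alpha_1\neq\alpha_2$ the affine function $\alpha\mapsto A_{\gamma}\alpha+B_{\gamma}$ is identically zero. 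Therefore $\Hol_{\alpha}$ is trivial on $N$ for every $\alpha\in IS(\Pc)$, so it factors through $\Pi_1(W,c)/N\cong\langle\lambda_1,\dots,\lambda_k\rangle$, and $G_{\alpha}$ is a quotient of this finite group. Thus $G_{\alpha}$ is finite for all $\alpha\in IS(\Pc)$.

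The step I expect to need the most care is the rigidity claim of the previous paragraph: one must verify that the combinatorial data $(\Lambda_{\gamma},A_{\gamma},B_{\gamma})$ is genuinely independent of $\alpha$, which rests on the loops generating $\Pi_1(W,c)$ — and the relations among them — being fixed once and for all, regardless of which $\F_{\alpha}$ is taken. The implication that a first integral forces finite holonomy also deserves a clean statement, but it is standard for Riccati foliations over $\P^1$.
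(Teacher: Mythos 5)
Your proof is correct, but it takes a genuinely different route from the paper's. The paper argues at the level of the listed generators: taking two parameters $0,\beta\in I_p(\Pc)\cap IS(\Pc)$, it uses that each $f_{j,0}$ and $f_{j,\beta}$ has finite order and that $G_0$ and $G_\beta$, being finite groups of affine maps, are abelian with a common fixed point; this forces $a_j=b_j=0$ whenever $\lambda_j=1$ and makes $a_j/(1-\lambda_j)$ and $b_j/(1-\lambda_j)$ independent of $j$, so for every $\alpha\in IS(\Pc)$ the generators are finite-order maps with a common fixed point and $G_\alpha$ is a finitely generated abelian torsion group, hence finite. You instead work with the whole representation: any word with trivial linear part is a translation whose translation length is affine in $\alpha$, and finiteness of $G_{\alpha_1}$ and $G_{\alpha_2}$ with $\alpha_1\neq\alpha_2$ forces that affine function to vanish identically, so every $G_\alpha$ is a quotient of the fixed finite group $\langle\lambda_1,\ldots,\lambda_k\rangle$. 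Both arguments hinge on the same trick (a quantity affine in $\alpha$ that vanishes at two parameters vanishes identically), but yours bypasses the abelianness/common-fixed-point analysis, and it additionally supplies a proof of the standard fact, used in the paper without justification, that $\alpha\in I_p(\Pc)\cap IS(\Pc)$ already makes $G_\alpha$ finite. One simplification: the caveat you flag about the loops of $\Pi_1(W,c)$ and their relations being fixed independently of $\alpha$ is not actually needed — you can run the identical computation with words in the free group on the $k$ symbols corresponding to $f_{1,\alpha},\ldots,f_{k,\alpha}$, since the hypothesis already gives generators whose coefficients $\lambda_j,a_j,b_j$ do not depend on $\alpha$; the kernel of the linear-part homomorphism from that free group then plays the role of your $N$, and $G_\alpha$ is a quotient of the (finite) image $\langle\lambda_1,\ldots,\lambda_k\rangle$ exactly as you argue.
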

\begin{proof}
Without loss of generality we may assume that $0\in I_p(\Pc)\cap IS(\Pc)$ and 
\[
 G_{\alpha}=\langle\lambda_1 z+a_1\alpha+b_1, \ldots, \lambda_n z+a_n\alpha+b_n,z+a_{n+1}\alpha+b_{n+1}, \ldots,z+a_k\alpha+b_{k}\rangle
\]
where $\lambda_j\neq 1$, for $j\leq n$.
Since $0 \in I_p(\Pc)$, $f_{j,0}$ has finite order, for all $j$, $\lambda_j$ is a root of unity, for all $j\leq n$, and $b_j=0$, for $j\geq n+1$. Besides, as $G_0$ is abelian, the generators $f_{j,0}$ have a common fixed point $\dfrac{b_j}{1-\lambda_j}$, which does not depend on $j$. 
Now take $\beta\in I_p(\Pc)\cap IS(\Pc)$, $\beta\neq 0$, which exists, by hypotheses. Note that, for all $j\geq n+1$, $\beta a_j=0$, which implies $a_j=0$.
Since $G_{\beta}$ is abelian, the common fixed point $\dfrac{\beta a_j+b_j}{1-\lambda_j}$ of $f_{j,\beta}$ does not depend on $j$. This in turn implies that $\dfrac{a_j}{1-\lambda_j}$ also does not depend on $j$.
In particular, for any $\alpha\in IS(\Pc)$, $G_{\alpha}$ is abelian, because  $\dfrac{a_j}{1-\lambda_j}$ and  $\dfrac{b_j}{1-\lambda_j}$ do not depend on $j$.
We conclude that $G_{\alpha}$ is finite, for all $\alpha\in IS(\Pc)$, because $G_{\alpha}$ is an abelian group, finitely generated,  whose all its elements have finite order.
\end{proof}

\begin{theorem}
Let $\Pc=\{\F_{\alpha}\}_{\alpha\in\Ce}$ be a flat pencil on a compact complex surface $X$ such that $\F_{\infty}$ has an holomorphic first integral $f:X\to \P^1$ and $\Delta(\Pc)$ is invariant. If $\gen(f)=0$ then $X$ is a rational surface where one of the following hold:
\begin{enumerate}
 \item $I_p(\Pc)$ is finite,
 \item $IS(\Pc)\subset I_p(\Pc)$, or
 \item there exist $\beta\in I_p(\Pc)$ and $\lambda\in\C^*$ such that 
 \[
 I_p(\Pc)\cap IS(\Pc)=(\lambda \Q+\beta)\cap IS(\Pc).
 \]
\end{enumerate}

\end{theorem}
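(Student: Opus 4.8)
The plan is to combine Theorem~\ref{teo:new:51}, Lemma~\ref{lemagen02} and Lemma~\ref{lemagen01} with the Enriques--Kodaira classification. First I would establish that $X$ is rational: since $\gen(f)=0$, Proposition~\ref{pro:new54} gives $\infty\notin IS(\Pc)$ and shows every $\F_\alpha$ with $\alpha\in IS(\Pc)$ is a Riccati foliation with respect to the rational fibration $f:X\to\P^1$. As $\Delta(\Pc)$ is non-empty and invariant, it contains at least one component inside $\sing(\F_\infty)$ which is a (multi)section of $f$, so $X$ carries a rational fibration together with a transverse Riccati foliation with a genuine invariant set; appealing to Brunella's description of Riccati foliations over $\P^1$ (the base being $\P^1$ forces $X$ to be a $\P^1$-bundle over $\P^1$, possibly after blow-downs, hence rational) yields that $X$ is a rational surface. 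I would phrase this via $c_1,c_2$ as in the proof of Theorem~\ref{teodeltavacio} if a cleaner route is needed, but the Riccati/$\P^1$-bundle argument is the shortest.

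\textbf{The dichotomy on $I_p(\Pc)$.} Next, by Theorem~\ref{teo:new:51} the generators of $G_\alpha$ are, uniformly in $\alpha\in IS(\Pc)$, either all of affine type $f_{j,\alpha}(z)=\lambda_j z+a_j\alpha+b_j$ or all of the multiplicative type $f_{j,\alpha}(z)=\exp(2\pi i(\mu_j\alpha+\nu_j))z$. In the multiplicative case $G_\alpha$ is abelian and $\alpha\in I_p(\Pc)$ exactly when every $\mu_j\alpha+\nu_j\in\Q$; this is either an affine image of $\Q$ (when some $\mu_j\neq0$) or all of $\Ce$ (when all $\mu_j=0$, giving $IS(\Pc)\subset I_p(\Pc)$, case~(2)) or empty/finite. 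So I would split into: $\#(I_p(\Pc)\cap IS(\Pc))\le 1$, which is case~(1); and $\#(I_p(\Pc)\cap IS(\Pc))\ge 2$. In the latter sub-case I invoke Lemma~\ref{lemagen02} (affine type) to conclude $G_\alpha$ is finite for all $\alpha\in IS(\Pc)$, and then Lemma~\ref{lemagen01} to promote finiteness of $G_\alpha$ to $\alpha\in I_p(\Pc)$ — one must check its hypotheses: $Z(\F_\alpha,C)\ge 1$ on every component of $\Delta(\Pc)$ (true because $\Delta(\Pc)$ is invariant and $\F_\alpha$ is Riccati, so each component meets $\sing(\F_\alpha)$), and the local meromorphic first integral at each singularity (which follows from the explicit local normal forms $dy+\alpha a(x)p_2(y)dx$ appearing in the proof of Theorem~\ref{teo:new:51}, together with finite linearizable holonomy). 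Once $G_\alpha$ is finite for \emph{all} $\alpha\in IS(\Pc)$, we get $IS(\Pc)\subset I_p(\Pc)$, which is case~(2) again — so actually $\#(I_p(\Pc)\cap IS(\Pc))\ge2$ in the affine case forces case~(2).

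\textbf{Isolating case~(3).} The surviving possibility is the multiplicative type with at least one $\mu_j\neq 0$ and $\#(I_p(\Pc)\cap IS(\Pc))\ge 2$. Here $\alpha\in I_p(\Pc)\cap IS(\Pc)$ iff $\mu_j\alpha+\nu_j\in\Q$ for every $j$; picking two solutions $\alpha_0\neq\alpha_1$ shows all ratios $\mu_j/\mu_1$ and the $\nu_j$ are rationally commensurable, so the condition reduces to a single linear congruence $\mu_1\alpha+\nu_1\in\Q$, i.e. $\alpha\in\lambda\Q+\beta$ where $\lambda=1/\mu_1$ and $\beta=-\nu_1/\mu_1$, with $\beta\in I_p(\Pc)$. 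Intersecting with $IS(\Pc)$ gives exactly statement~(3). (This sharpens the earlier ``up to reparametrization'' phrasing to the explicit affine-over-$\Q$ description.) The main obstacle I anticipate is the verification, in the affine branch with $\#(I_p(\Pc)\cap IS(\Pc))\ge2$, that Lemma~\ref{lemagen01}'s local-meromorphic-first-integral hypothesis genuinely holds at \emph{every} singularity of $\F_\alpha$ — including those on the components $C_s'\subset\sing(\F_\infty)$ that are not fibers of $f$ — since there the Riccati normal form must be checked to degenerate appropriately; I would handle this by noting that on such a component the leaf structure is dictated by the (now finite) holonomy, so a finite branched cover trivializes the foliation locally and produces the first integral.
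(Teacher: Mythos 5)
Your proposal is correct and follows essentially the same route as the paper: rationality via blowing down the genus-zero fibration to a $\P^1$-bundle (Hirzebruch surface) over $\P^1$, then Theorem~\ref{teo:new:51} for the two holonomy normal forms, Lemma~\ref{lemagen02} to get finiteness of $G_\alpha$ in the affine case, Lemma~\ref{lemagen01} (with the checks that $Z(\F_\alpha,C)\ge 1$ and that local meromorphic first integrals exist, which the paper handles by citing the proof of Proposition~2 in~\cite{BR1}) to upgrade finite holonomy to membership in $I_p(\Pc)$, and the same rational-commensurability argument in the multiplicative case to isolate case~(3). The only differences are cosmetic: your case split on $\#(I_p(\Pc)\cap IS(\Pc))$ versus the paper's assumption that $I_p(\Pc)$ is infinite, and your more explicit verification of the local-first-integral hypothesis via the Riccati normal forms.
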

\begin{proof}
We can suppose that every fiber of $f$ is regular, up to blowing-down $\pi:X\to X_1$~\cite[p.~142]{BPV}. In particular, $g=f\circ\pi^{-1}:X_1\to \P^1$ is an holomorphic fiber bundle with typical fiber $\P^1$ and $X_1$ is a $n$-th Hirzebruch surface~\cite[p. 141]{BPV}. Therefore $X$ is rational, because $X_1$ is birrationally equivalent to $\P^2$.

 By Theorem~\ref{teo:new:51}, for any $\alpha\in IS(\Pc)$, the holonomy group $G_{\alpha}=\langle f_{1,\alpha},\ldots,f_{k,\alpha}\rangle$ is determined by either
 \begin{multline*}
 f_{j,\alpha}(z)=\lambda_j z+\alpha a_j+b_j,\,\forall\,j\in\{1,\ldots,k\},\\
 \text{or}\\f_{j,\alpha}(z)=\exp(2\pi i(\mu_j\alpha+\nu_j))z,\,\forall\,j\in\{1,\ldots,k\}.
 \end{multline*}

Assume that $I_p(\Pc)$ is infinite. In particular $I_p(\Pc)\cap IS(\Pc)$ is also infinite and, without loss of generality, we also may assume that $0\in I_p(\Pc)\cap IS(\Pc)$.

We divide our analysis in two cases. First assume that $f_{j,\alpha}(z)=\lambda_j z+\alpha a_j+b_j$, for all $j=1,\ldots,k$. Fix $\alpha\in IS(\Pc)$. 
By Lemma~\ref{lemagen02}  $G_{\alpha}$ is finite. Moreover, following the proof of Proposition~2 in~\cite{BR1}, since $0\in I_p(\Pc)$,  $\F_{\alpha}$ is a Riccati foliation and $\Delta(\Pc)$ is invariant, $\F_{\alpha}$ has  local first integral on their singularities and $Z(\F_\alpha, C) \geq 1$, for every component $C$ of $\Delta(\Pc)$. 
Therefore, by Lemma~\ref{lemagen01}, $\alpha \in I_p(\Pc)$.

On the other hand, assume that $f_{j,\alpha}(z)=\exp(2\pi i(\mu_j\alpha+\nu_j))z$, $j=1,\ldots,k$. In this case:
\begin{enumerate}
 \item If $\mu_j=0$ for all $j=1,\ldots,k$, take  $\alpha\in IS(\Pc)$.  Since $0\in I_p(\Pc)$, $\nu_j\in\Q$, for all $j=1,\ldots,k$, and $G_{\alpha}$ is finite.
 Since $\F_{\alpha}$ has a meromorphic local first integral in each singularity and  $ Z(\alpha, C) \geq 1$, for all component $C$ of $\Delta(\Pc)$, from Lemma~\ref{lemagen01}, $\alpha\in I_p(\Pc)$. 
 
 \item If there exists $j\in\{1,\ldots,k\}$ such that $\mu_j\neq 0$ then we may assume, without loss of generality, that $\mu_j\neq 0$, for $j=1,\ldots,r$ and $\mu_j=0$, for $j=r+1,\ldots,k$. Fix $\beta\in I_p(\Pc)\cap IS(\Pc)$, so $G_{\beta}$ is finite and $\mu_j\beta+\nu_j\in\Q$, for all $j=1,\ldots,r$. Let $\alpha\in IS(\Pc)$, $\alpha\neq\beta$, then
\begin{multline*}
G_{\alpha}\text{ is finite}\iff\\
\mu_j(\alpha-\beta)+\mu_j\beta+\nu_j=\mu_j\alpha+\nu_j\in\Q,\,\forall\,j=1,\ldots,r,\\
\iff \mu_j(\alpha-\beta)\in\Q,\,\forall\,j=1,\ldots,r,
\end{multline*}
In particular, for $i,j\in\{1,\ldots,r\}$, we obtain $\dfrac{\mu_i}{\mu_j}\in\Q$, that is,
\[
(\mu_1,\ldots,\mu_r)=\mu_1(1,q_2,\ldots,q_r),
\]
where $q_i\in\Q$, for $i=2,\ldots,r$. The implies, even when $\alpha=\beta$, that
\begin{align*}
G_{\alpha}\text{ is finite}&\iff \mu_j(\alpha-\beta)\in\Q,\,\forall\,j=1,\ldots,r,\\
&\iff \mu_1(\alpha-\beta)\in\Q\\
                           &\iff \alpha\in \mu_1^{-1}\Q+\beta.
\end{align*}
In particular, if $\alpha\in I_p(\Pc)\cap IS(\Pc)$ then $\alpha\in \mu_1^{-1}\Q+\beta$. 
Now take $\alpha\in(\mu_1^{-1}\Q+\beta)\cap IS(\Pc)$, hence $G_{\alpha}$ is finite. Since $\F_{\alpha}$ has a meromorphic local first integral in each singularity and  $Z(\alpha, C) \geq 1$, for every component $C$ of $\Delta(\Pc)$, from Lemma~\ref{lemagen01}, $\alpha\in I_p(\Pc)$. 
\end{enumerate}
\end{proof}

When $\infty\in IS(\Pc)\cap I_p(\Pc)$ and $\Delta(\Pc)$ is invariant, from Theorem~\ref{teo:linsneto}, we obtain the following proposition.

\begin{proposition}
Let $\Pc=\{\F_{\alpha}\}_{\alpha\in\Ce}$ be a flat pencil on a compact complex surface $X$ such that $\F_{\infty}$ is reduced and has an holomorphic first integral $f:X\to S$, 
and $\Delta(\Pc)$ is invariant. If $\infty\in IS(\Pc)$ and is a limit point of $I_p(\Pc)$ then:
\begin{enumerate}
 \item If $K_X\neq 0$ then there exist $\lambda\in\C^*$ and $a\in\C$ such that
 \[
 I_p(\Pc)=(\lambda(\Q\oplus\tau\Q)+a)\cup\{\infty\},
 \]
where $\tau$ is either $i$ or $e^{2\pi i/3}$.
\item If $K_X=0$ and $\gen(S)=0$ then $X$ is an algebraic torus. Moreover, assume $X=E\times E$, where $E$ is an elliptic curve, then 
\[
I_p(\Pc)=\End(E)\otimes\Q.
\]
\end{enumerate}
\end{proposition}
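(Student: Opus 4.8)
The statement is a specialization of Theorem~\ref{teo:linsneto}, and the plan is to verify its hypotheses for a suitable sub-pencil, apply it, and then transport the resulting description of $I_p$ back to $\Pc$ by a change of parameter.

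\emph{Reduction to Theorem~\ref{teo:linsneto}.} Since $\infty\in IS(\Pc)$ and $\Delta(\Pc)$ is invariant and non-empty, Corollary~\ref{teo:teomain} gives $\gen(f)=1$, so $f$ is an elliptic fibration; also $\infty\in I_p(\Pc)$, because $f$ is itself an holomorphic first integral of $\F_\infty$. As $\infty$ is a limit point of $I_p(\Pc)$ the set $I_p(\Pc)$ is infinite, and since $NI(\Pc)$ is finite we may fix $\alpha_0\in(I_p(\Pc)\cap IS(\Pc))\setminus\{\infty\}$. For $\alpha,\beta\in IS(\Pc)$ one has $N_{\F_\alpha}=N_{\F_\beta}$, hence $T_{\F_\alpha}=T_{\F_\beta}$ because $K_X=N_{\F}^*\otimes T_{\F}^*$ is fixed; in particular $T_{\F_\infty}=T_{\F_{\alpha_0}}$ and $\F_\infty\neq\F_{\alpha_0}$. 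Thus $\F:=\F_\infty$ and $\G:=\F_{\alpha_0}$ satisfy all the hypotheses of Theorem~\ref{teo:linsneto}: $\F\neq\G$, $T_\F=T_\G$, $\F$ is reduced with the holomorphic first integral $f$, which is an elliptic fibration, and $\G$ has an holomorphic first integral since $\alpha_0\in I_p(\Pc)$. Let $\Pc'$ be the pencil generated by $\F$ and $\G$; then $\Pc'=\Pc$ after the reparametrization $\alpha'=1/(\alpha-\alpha_0)$, which makes $\F'_0=\F_\infty$, $\F'_\infty=\F_{\alpha_0}$ and $\F'_{\alpha'}=\F_{\alpha_0+1/\alpha'}$, so that $I_p(\Pc)=\{\alpha_0+1/\alpha'\::\:\alpha'\in I_p(\Pc')\}$.

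\emph{Case $K_X\neq0$.} Theorem~\ref{teo:linsneto}(1) applied to $\Pc'$ yields that $X$ is rational and that $I_p(\Pc')=\lambda\cdot\Q\cdot\Gamma\cup\{\infty\}$, where $\lambda\in\C^*$ and $\Q\cdot\Gamma=\Q+\tau\Q=\Q(\tau)$ with $\tau\in\{e^{2\pi i/3},i\}$. Since $\Q(\tau)$ is a field, it is stable under $w\mapsto w^{-1}$; consequently the M\"obius map $\alpha'\mapsto\alpha_0+1/\alpha'$ carries $\lambda\Q(\tau)\cup\{\infty\}$ onto $(\alpha_0+\lambda^{-1}\Q(\tau))\cup\{\infty\}$, the value $\alpha'=0$ giving the point $\infty$ and $\alpha'=\infty$ giving $\alpha_0$. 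Hence $I_p(\Pc)=(\lambda^{-1}(\Q\oplus\tau\Q)+\alpha_0)\cup\{\infty\}$, which is item~(1) after renaming $\lambda^{-1}$ as $\lambda$ and putting $a=\alpha_0$.

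\emph{Case $K_X=0$.} By Theorem~\ref{teo:linsneto}(2), $X$ is an algebraic torus or an algebraic $K3$ surface, and since $\#I_p(\Pc)\geq3$ the family is exceptional. A $K3$ surface is simply connected and so admits no non-constant holomorphic map onto a curve of positive genus (lift through the universal cover of the base and use that $X$ is compact); together with the hypothesis on the base $S$ of $f$ this rules out the $K3$ case, so $X$ is an algebraic torus. Writing $X=E\times E$ and using Proposition~\ref{pro:new:ghys} together with the fact that $\F_\infty$ has an holomorphic first integral, one identifies $\Pc$ with the linear pencil on $E\times E$ associated to a pair $\omega_1,\omega_2$ of $1$-forms admitting rational first integrals; Proposition~\ref{pro:new:vitorio} then gives $I_p(\Pc)=\End(E)\otimes\Q$.

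\emph{Main difficulty.} Almost all the content is imported from Theorem~\ref{teo:linsneto}. The only non-formal observation in item~(1) is that $\Q\cdot\Gamma=\Q(\tau)$ is a field, hence closed under inversion: this is exactly what makes the image of $\lambda\Q(\tau)\cup\{\infty\}$ under the reparametrization again an affine translate of a lattice (plus $\infty$), and without it the transported set would not have the asserted shape. In item~(2) the delicate points are excluding the $K3$ alternative from the geometry of the fibration $f$, and, on $E\times E$, justifying that the a priori turbulent and singular pencil $\Pc$ is equivalent to a genuine linear pencil so that Proposition~\ref{pro:new:vitorio} applies — equivalently, one may quote Lins Neto's direct description of exceptional families with $K_X=0$ in~\cite{LN3}.
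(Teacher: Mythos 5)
Your treatment of item (1) follows essentially the same route as the paper: reduce to Theorem~\ref{teo:linsneto} by pairing $\F_\infty$ with a second member of the pencil that has a holomorphic first integral, and then translate the answer back through the change of parameter. Your choice of $\alpha_0\in(I_p(\Pc)\cap IS(\Pc))\setminus\{\infty\}$ (the paper instead takes $\beta$ in a small punctured neighborhood of $\infty$ supplied by the limit-point hypothesis) is legitimate, since Theorem~\ref{teo:linsneto} only requires the singularities of $\F=\F_\infty$ to be reduced, and $T_{\F_\infty}=T_{\F_{\alpha_0}}$ follows as you say from $N_{\F_\infty}=N_{\F_{\alpha_0}}$ and $K_X=N^*_{\F}\otimes T^*_{\F}$. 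Your explicit M\"obius transport of $\lambda\,\Q\Gamma\cup\{\infty\}$, using that $\Q(\tau)$ is a field and hence stable under inversion, is a correct and useful elaboration of a step the paper compresses into ``the proposition follows.''

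Item (2), however, contains a genuine gap. The argument you give to exclude the $K3$ alternative is a non sequitur: simple connectedness of a $K3$ surface only forbids non-constant holomorphic maps onto curves of genus $\geq 1$, while the hypothesis here is $\gen(S)=0$, so this obstruction says nothing --- $K3$ surfaces do admit elliptic fibrations over $\P^1$. In fact the elementary geometry points the opposite way: an elliptic fibration with connected fibres on a two-dimensional complex torus is (up to isomorphism of the base) the quotient by a one-dimensional subtorus, so its base has genus one; hence no purely topological argument about the base of $f$ can produce the conclusion ``torus'' from ``$\gen(S)=0$,'' and the dichotomy torus/$K3$ must be resolved by the finer structure of the exceptional families with $K_M=0$ classified in~\cite{LN3}, which is exactly what the paper's proof (and its appeal to Theorem~\ref{teo:linsneto}) relies on. The second half of your item (2) has the same defect: Proposition~\ref{pro:new:ghys} concerns regular foliations on a torus, whereas here $\Delta(\Pc)$ is invariant and non-empty, so the members of $\Pc$ are a priori singular turbulent foliations, and the identification of $\Pc$ with a linear pencil on $E\times E$ (needed to invoke Proposition~\ref{pro:new:vitorio}) is asserted rather than proved --- your closing ``equivalently, one may quote Lins Neto'' concedes that this step, like the $K3$ exclusion, is being deferred to~\cite{LN3} rather than established. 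So item (1) stands as written, but item (2) needs to be rebuilt directly on Lins Neto's description of the exceptional $K_M=0$ families, as the paper does.
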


\begin{proof} 
Since $\Delta(\Pc)$ is invariant and $\infty\in IS(\Pc)$, by Corollary~\ref{teo:teomain}, $\gen(f)=1$. On the other hand, from the fact that $\F_{\infty}$ is reduced and $I_p(\Pc)$ has a limit point at $\infty$, we obtain that there exists a neighborhood $U$ of $\infty$ such that, for any $\alpha\in U$, $\F_{\alpha}$ has reduced singularities and holomorphic first integral. We now take any $\beta\in U$ such that $\F_{\beta}\neq\F_{\infty}$. By Theorem~\ref{teo:linsneto}, the proposition follows.
\end{proof}

\section{Acknowledgements}

We would like to thank the anonymous referees, for the comprehensive review and for the helpful comments and suggestions, which helped improved this work.

\bibliographystyle{abbrv}
\bibliography{refer}
\end{document}